\DeclareFontFamily{U}{matha}{\hyphenchar\font45}
\DeclareFontShape{U}{matha}{m}{n}{
      <5> <6> <7> <8> <9> <10> gen * matha
      <10.95> matha10 <12> <14.4> <17.28> <20.74> <24.88> matha12
      }{}
\DeclareSymbolFont{matha}{U}{matha}{m}{n}
\DeclareFontFamily{U}{mathx}{\hyphenchar\font45}
\DeclareFontShape{U}{mathx}{m}{n}{
      <5> <6> <7> <8> <9> <10>
      <10.95> <12> <14.4> <17.28> <20.74> <24.88>
      mathx10
      }{}
\DeclareSymbolFont{mathx}{U}{mathx}{m}{n}
\DeclareMathSymbol{\obot}         {2}{matha}{"6B}
\DeclareMathSymbol{\bigobot}       {1}{mathx}{"CB}
\def\easycyrsymbol#1{\mathord{\mathchoice
  {\mbox{\fontsize\tf@size\z@\usefont{T2A}{\rmdefault}{m}{n}#1}}
  {\mbox{\fontsize\tf@size\z@\usefont{T2A}{\rmdefault}{m}{n}#1}}
  {\mbox{\fontsize\sf@size\z@\usefont{T2A}{\rmdefault}{m}{n}#1}}
  {\mbox{\fontsize\ssf@size\z@\usefont{T2A}{\rmdefault}{m}{n}#1}}
}}
\newcommand{\niceonehalf}{{\nicefrac{1}{2}}}
\newcommand{\niceonefifth}{{\nicefrac{1}{5}}}
\newtheorem{theorem}{Theorem}[section]
\newtheorem{lemma}[theorem]{Lemma}
\theoremstyle{definition}
\newtheorem{example}[theorem]{Example}
\theoremstyle{remark}
\newtheorem{remark}[theorem]{Remark}
\newtheorem*{theconstants*}{Constants}
\numberwithin{equation}{section}
\newcommand{\vol}{\operatorname{vol}}
\newcommand{\eps}{{\epsilon}}
\newcommand{\inv}{{-1}}
\newcommand{\Jacobian}{\operatorname{D}}
\newcommand{\dif}{{\mathrm d}}
\newcommand{\grad}{\operatorname{grad}}
\newcommand{\curl}{\operatorname{curl}}
\newcommand{\divergence}{\operatorname{div}}
\newcommand{\diver}{\operatorname{div}}
\newcommand{\Laplace}{\Delta}
\newcommand{\diam}{{\operatorname{diam}}}
\newcommand{\dist}{\operatorname{dist}}
\newcommand{\Lip}{\operatorname{Lip}}
\newcommand{\trace}{\operatorname{tr}}
\newcommand{\Id}{\operatorname{Id}}
\newcommand{\dom}{\operatorname{dom}}
\newcommand{\rng}{\operatorname{ran}}
\newcommand{\convex}{\operatorname{convex}}
\newcommand{\cartan}{{\mathsf d}}
\newcommand{\cartanx}{{{\mathsf d}x}}
\newcommand{\onehalf}{{\frac{1}{2}}}
\newcommand{\vectornorm}[1]{\left\|#1\right\|}
\newcommand\suchthat{%
 \@ifstar
  {|}
  {\mathrel{}\middle|\mathrel{}}
}
\newcommand\xqedhere[2]{
  \rlap{\hbox to#1{\hfil\llap{\ensuremath{#2}}}}}
\newcommand{\bbN}{{\mathbb N}}
\newcommand{\bbR}{{\mathbb R}}
\newcommand{\bbZ}{{\mathbb Z}}
\newcommand{\calC}{{\mathcal C}}
\newcommand{\calH}{{\mathcal H}}
\newcommand{\calJ}{{\mathcal J}}
\newcommand{\calL}{{\mathcal L}}
\newcommand{\calP}{{\mathcal P}}
\newcommand{\calT}{{\mathcal T}}
\newcommand{\calU}{{\mathcal U}}
\newcommand{\frakD}{{\mathfrak D}}
\newcommand{\frakH}{{\mathfrak H}}
\newcommand{\frakL}{{\mathfrak L}}
\newcommand{\vecf}{{\vec f}}
\newcommand{\vecn}{{\vec n}}
\newcommand{\vecu}{{\vec u}}
\title[Smoothed Projections and Mixed BC]{Smoothed Projections and\\ Mixed Boundary Conditions}
\author[Martin W.\ Licht]{Martin W.\ Licht}
\address{Martin Licht, UCSD Department of Mathematics, 9500 Gilman Drive MC0112, La Jolla, CA 92093-0112}
\email{mlicht@ucsd.edu}
\thanks{This research was supported by the European Research Council through 
the FP7-IDEAS-ERC Starting Grant scheme, project 278011 STUCCOFIELDS}
\subjclass[2000]{65N30, 58A12}
\keywords{finite element exterior calculus, smoothed projection, partial boundary conditions, mixed boundary conditions}
\begin{document}

\def\justbeingincluded{justbeingincluded}


 %
 
\begin{abstract}
 Mixed boundary conditions are introduced to finite element exterior calculus.
 We construct smoothed projections from Sobolev de~Rham complexes onto finite element de~Rham complexes 
 which
 commute with the exterior derivative,
 preserve homogeneous boundary conditions along a fixed boundary part,
 and
 satisfy uniform bounds for shape-regular families of triangulations
 and bounded polynomial degree.
 The existence of such projections implies sta\-bi\-li\-ty and quasi-optimal convergence of mixed finite element methods 
 for the Hodge Laplace equation with mixed boundary conditions. 
 In addition, we prove  
 the density of smooth differential forms 
 in Sobolev spaces of differential forms over weakly Lipschitz domains with partial boundary conditions. 
\end{abstract}

\maketitle


%
%

\section{Introduction}
In this article, we address the numerical analysis of the Hodge Laplace equation
when \emph{mixed boundary conditions} are imposed. 
Here, we speak of mixed boundary conditions 
when essential boundary conditions are imposed on one part of the boundary,
while natural boundary conditions are imposed on the complementary boundary part.
Special cases are the Poisson equation 
with mixed Dirichlet and Neumann boundary conditions \cite{necas2011direct},
and the vector Laplace equation with mixed tangential and normal boundary conditions \cite{fernandes1997magnetostatic}. 
It is known in the theory of partial differential equations 
that the Hodge Laplace equation with mixed boundary conditions 
arises from Sobolev de~Rham complexes with \emph{partial boundary conditions} \cite{jakab2009regularity,GMM}.
These are composed of spaces of Sobolev differential forms in which boundary conditions 
are imposed only on a part of the boundary (corresponding to the essential boundary conditions).

Moving towards the numerical analysis of mixed finite element methods 
for the Hodge Laplace equation with mixed boundary conditions,
we adopt the framework of \emph{finite element exterior calculus} (FEEC, \cite{AFW1}).
Smoothed projections from Sobolev de~Rham complexes onto finite element de~Rham complexes 
play a central role in the a priori error analysis 
within FEEC. 
The literature provides the corresponding smoothed projections in the special cases of 
either fully essential 
or fully natural boundary conditions,
but the general case of mixed boundary conditions has remained open \cite{AFW1,AFW2}.
In order to enable the abstract Galerkin theory of FEEC
for the general case of mixed boundary conditions,
we need a smoothed projection that preserves partial boundary conditions.

Constructing such a smoothed projection is the main contribution of this article.
The abstract Galerkin theory of Hilbert complexes 
then shows quasi-optimal convergence of a large class of mixed finite element methods. 
\\

Mixed boundary conditions for vector-valued partial differential equations 
are a non-trivial topic, and even more so in numerical analysis.
We outline the topic and prior research, 
starting with mixed boundary conditions for the Poisson problem. 
Suppose that $\Omega$ is a bounded connected Lipschitz domain
with outward unit normal field $\vecn$ along $\partial\Omega$.
Let $\Gamma_D \subseteq \partial\Omega$ 
be a subset of the boundary with positive surface measure
and let $\Gamma_N := \partial\Omega \setminus \Gamma_D$. 
Given a function $f$, the Poisson problem
with mixed boundary conditions asks for the solution $u$ of  
\begin{gather}
 \label{intro:poissonproblem}
 - \Delta u = f, 
 \quad 
 u_{|\Gamma_{D}} = 0, 
 \quad 
 \nabla u_{|\Gamma_{N}} \cdot \vec n = 0.
\end{gather}
Here, we impose a homogeneous \emph{Dirichlet boundary condition} along $\Gamma_{D}$
and a homogeneous \emph{Neumann boundary condition} along $\Gamma_{N}$.
If $f \in L^{2}(\Omega)$, 
then a weak formulation 
characterizes the solution as the unique minimizer of the energy 
\begin{gather}
 \label{intro:poissonenergy}
 \calJ( v )
 := 
 \onehalf \int_{\Omega}
 | \grad v |^{2}
 \;\dif x
 -
 \int_{\Omega}
 f v
 \;\dif x
\end{gather}
over $H^{1}(\Omega,\Gamma_D)$, the subspace of $H^{1}(\Omega)$
whose members satisfy the (essential) Dirichlet boundary condition along $\Gamma_D$.
The well-posedness of this variational problem follows by a Friedrichs inequality
with partial boundary conditions \cite{necas2011direct}.
Moreover, the compactness of the embedding $H^{1}(\Omega,\Gamma_D) \rightarrow L^{2}(\Omega)$
is crucial in proving the compactness of the solution operator.
A typical finite element method seeks a discrete approximation of $u$ by minimizing $\calJ$
over a space of Lagrange elements in $H^{1}(\Omega,\Gamma_D)$.
The discussion of this Galerkin method is standard \cite{Braess2007},
but still we cannot approach the Poisson problem with mixed boundary conditions
by the current means of FEEC due to the lack of a smoothed projection.
\\

The natural generalization to vector-valued problems in three dimensions
is given by the vector Laplace equation with mixed boundary conditions.
This equation appears in electromagnetism or fluid dynamics.
The analysis of this vector-valued partial differential equation, 
however, is considerably more complex.
Given the vector field $\vec f$,
we seek a vector field $\vec u$ that solves  
\begin{gather}
 \label{intro:vectorlaplacian:core}
 \curl \curl \vec u - \grad \diver \vec u = \vec f 
\end{gather}
over the domain $\Omega$. 
Moreover we assume that $\partial\Omega = \Gamma_T \cup \Gamma_N$
is an essentially disjoint partition of the boundary into relatively open subsets;
geometric details are discussed later in this article.
The boundary conditions on $\vec u$ are 
\begin{gather}
 \label{intro:vectorlaplacian:boundary}
 \vec u_{|\Gamma_{T}} \times \vec n = 0
 , 
 \quad
 (\diver \vec u)_{|\Gamma_{T}} = 0
 ,
 \quad 
 \vec u_{|\Gamma_{N}} \cdot \vec n = 0
 ,
 \quad
 (\curl \vec u)_{|\Gamma_{N}} \times \vec n  = 0
 .
\end{gather}
Here we impose homogeneous \emph{tangential boundary conditions} on $\vec u$ along a boundary part $\Gamma_{T}$,
and homogeneous \emph{normal boundary conditions} on $\vec u$ along the complementary boundary part $\Gamma_{N}$.
When $\vecf \in L^{2}(\Omega,\bbR^{3})$, 
then a variational formulation seeks the solution by minimizing the energy functional
\begin{gather}
 \label{intro:vectorlaplaceenergy}
 \calJ( \vec v )
 := 
 \onehalf 
 \int_{\Omega} 
  | \divergence \vec v |^{2}
  + 
  | \curl \vec v |^{2}
 \;\dif x
 -
 \int_{\Omega} 
  \vecf \cdot \vec v 
 \;\dif x
\end{gather}
over the space $H(\divergence,\Omega,\Gamma_N) \cap H(\curl,\Omega,\Gamma_T)$.
Here $H(\divergence,\Omega,\Gamma_N)$ is the subspace of $H(\divergence,\Omega)$
satisfying normal boundary conditions along $\Gamma_N$, and $H(\curl,\Omega,\Gamma_T)$
is the subspace of $H(\curl,\Omega)$ satisfying tangential boundary conditions 
along $\Gamma_T$.

The additional complexity in comparison to the scalar-valued case 
begins with the correct definition of tangential and normal boundary conditions
in a setting of low regularity 
\cite{weck1974maxwell,buffa2002traces,buffa2003trace,weck2004traces,GMM}.
When non-mixed boundary conditions are imposed, 
so that either $\Gamma_T = \emptyset$ or $\Gamma_T = \partial\Omega$,
then 
Rellich-type compact embeddings 
$H(\divergence,\Omega,\Gamma_N) \cap H(\curl,\Omega,\Gamma_T) \rightarrow L^{2}(\Omega,\bbR^{3})$
and vector-valued Poincar\'e-Friedrichs inequalities have been known for a long time 
\cite{weber1980local,picard1984elementary,witsch1993remark,costabel2010bogovskiui}.
Mixed boundary conditions in vector analysis, however, 
have only recently been addressed systematically in pure analysis
\cite{jochmann1997compactness,kuhn1999maxwellgleichung,jochmann1999regularity,alonso2003unique,bauer2015maxwell}.

Additional difficulties arise in numerical analysis.
Minimizing \eqref{intro:vectorlaplaceenergy} over a finite element subspace
of $H(\divergence,\Omega,\Gamma_N) \cap H(\curl,\Omega,\Gamma_T)$ generally does not lead 
to a consistent finite element method \cite{costabel1991coercive,AFW2}.
But mixed finite element methods,
which introduce either $\divergence \vecu$ or $\curl \vecu$ as auxiliary variables,
have been studied with great success
\cite{bossavit1998computational,hiptmair2002finite,monk2003finite,demkowicz2006computing}.
Mixed boundary conditions for the vector Laplace equation, however,
have not yet received much attention in numerical analysis (but see \cite{schoberl2005multilevel,gopalakrishnan2011partial}).
A mixed finite element method for the vector Laplace equation 
with mixed boundary conditions only incorporates 
the essential boundary conditions along $\Gamma_T$ into the finite element space.
\\

We attend particularly to a phenomenon that 
significantly affects the theoretical and numerical analysis of the vector Laplace equation
but remains absent in the scalar-valued theory:
the presence of non-trivial harmonic vector fields in $H(\divergence,\Omega,\Gamma_N) \cap H(\curl,\Omega,\Gamma_T)$.
Specifically, let $\vec\calH(\Omega,\Gamma_{T},\Gamma_{N})$
denote the subspace of $H(\divergence,\Omega,\Gamma_N) \cap H(\curl,\Omega,\Gamma_T)$
whose members have vanishing curl and vanishing divergence.
This space has physical relevance:
in fluid dynamics, for example, those vector fields describe the incompressible irrotational flows 
that satisfy given boundary conditions.
In the case of non-mixed boundary conditions,
their dimension corresponds to topological properties of the domain \cite{mitrea2002finite},
and in particular that dimension is zero on contractible domains.
But in the case of mixed boundary conditions, 
this dimension depends on the topology of both the domain $\Omega$ and the boundary part $\Gamma_T$.
Thus $\vec\calH(\Omega,\Gamma_{T},\Gamma_{N})$ may have positive dimension
if $\Gamma_T$ has a sufficiently complicated topology even if $\Omega$ itself is contractible 
\cite{kress1971kombiniertes,GMM}.
This dimension can be calculated exactly from a given triangulation of $\Omega$ and $\Gamma_T$.
In a finite element method, the subspace $\vec\calH(\Omega,\Gamma_{T},\Gamma_{N})$ 
must be approximated by discrete harmonic fields, i.e., the kernel of the finite element vector Laplacian.
\\

It is instructive to study these partial differential equations in a unified manner using the calculus of differential forms. 
Both the Poisson problem and the vector Laplace equation with mixed boundary conditions 
are special cases of the Hodge Laplace equation with mixed boundary conditions.
The Hodge Laplace equation has been studied extensively over Sobolev spaces of differential forms
\cite{weck1974maxwell,bruening1992hilbert,scott1995Lp,kuhn1999maxwellgleichung,mitrea2001dirichlet,mitrea2002traces,axelsson2004hodge,mitrea2004sharp,weck2004traces},
but the case of mixed boundary conditions has been a recent subject of research 
\cite{GMM,jakab2009regularity}.
The theoretical foundation are de~Rham complexes with partial boundary conditions.
Let $L^{2}\Lambda^{k}(\Omega)$ be the space of differential forms with square-integrable coefficients,
and let $H\Lambda^{k}(\Omega,\Gamma_T)$ be the subspace of $L^{2}\Lambda^{k}(\Omega)$
whose members have exterior derivatives in $L^{2}\Lambda^{k+1}(\Omega)$
and satisfy boundary conditions along $\Gamma_T$.
We consider the differential complex 
\begin{align}
 \label{intro:sobolevderhamcomplex}
 \begin{CD}
  \dots
  @>\cartan>>
  H\Lambda^k(\Omega,\Gamma_T)
  @>\cartan>>
  H\Lambda^{k+1}(\Omega,\Gamma_T)
  @>\cartan>>
  \dots
 \end{CD}
\end{align}
where $\cartan$ denotes the exterior derivative. 
The widely studied special cases $\Gamma_T = \emptyset$ and $\Gamma_T = \partial\Omega$
correspond to either imposing 
essential boundary conditions 
either nowhere or along the entire boundary.

The calculus of differential forms has attracted interest as a unifying framework for mixed finite element methods
\cite{hiptmair2002finite,AFW1,AFW2,falk2014local,arnold2013finite,demlow2014posteriori}.
The numerical analysis of mixed finite element methods 
for the Hodge Laplace equation can be formulated in terms of finite element de~Rham complexes,
which mimic the differential complex \eqref{intro:sobolevderhamcomplex} on a discrete level.
Finite element exterior calculus is a formalization of that approach. 
But whereas the special cases of non-mixed boundary conditions are standard applications,
mixed boundary conditions (corresponding to more general choices of $\Gamma_T$) have not been addressed in this context yet.

We outline the corresponding finite element de~Rham complexes.
We let $\calT$ be a triangulation of $\Omega$ that also contains a triangulation $\calU$ of $\Gamma_T$.
Moreover, we let $\calP\Lambda^{k}(\calT)$ denote a space 
of piecewise polynomial differential forms in $H\Lambda^{k}(\Omega)$
of the type $\calP_r\Lambda^{k}(\calT)$
or $\calP_r^{-}\Lambda^{k}(\calT)$ as described in \cite{AFW1,afwgeodecomp}.
Then the finite element space with essential boundary conditions is 
$\calP\Lambda^{k}(\calT,\calU) := \calP\Lambda^{k}(\calT) \cap H\Lambda^{k}(\Omega,\Gamma_T)$.
In this article we classify a family of finite element de~Rham complexes 
\begin{align}
 \label{intro:FEderhamcomplex}
 \begin{CD}
  \dots 
  @>\cartan>>
  \calP\Lambda^{k  }(\calT,\calU)
  @>\cartan>>
  \calP\Lambda^{k+1}(\calT,\calU)
  @>\cartan>>
  \dots
 \end{CD}
\end{align}
that feature these essential boundary conditions
and guide the construction of stable mixed finite element methods,
completely analogous to the classification of finite element de~Rham complexes
in the case of non-mixed boundary conditions \cite{AFW1}.

To relate the continuous and discrete levels
and enable the abstract Galerkin theory of FEEC (\cite{AFW2}),
we need a projection $\pi^{k} : H\Lambda^{k  }(\Omega,\Gamma_T) \rightarrow \calP\Lambda^{k  }(\calT,\calU)$ 
onto the finite element space 
that 
is uniformly $L^{2}$ bounded 
and 
commutes with the differential operator. 
In particular, the following diagram commutes:
\begin{align}
 \label{intro:commutingdiagram}
 \begin{CD}
  \dots
  @>\cartan>>
  H\Lambda^{k  }(\Omega,\Gamma_T)
  @>\cartan>>
  H\Lambda^{k+1}(\Omega,\Gamma_T)
  @>\cartan>>
  \dots
  \\
  @. @V{\pi^{k}}VV @V{\pi^{k+1}}VV @.
  \\
  \dots
  @>\cartan>>
  \calP\Lambda^{k  }(\calT,\calU)
  @>\cartan>>
  \calP\Lambda^{k+1}(\calT,\calU)
  @>\cartan>>
  \dots
 \end{CD}
\end{align}
Given such a projection, we obtain a priori convergence results 
for mixed finite element methods \cite{AFW2}.
A specific example are the smoothed projections 
in finite element exterior calculus  
for non-mixed boundary conditions
\cite{AFW1,christiansen2008smoothed,licht2016smoothed}.
\\

The main contribution of this article 
are smoothed projections $\pi^{k}$
for de~Rham complexes with partial boundary conditions.
In particular, we bound their operator norm in terms of the mesh quality 
and the finite element polynomial degree.

Continuing the research in \cite{licht2016smoothed},
we assume minimal geometric regularity 
and conduct our construction over \emph{weakly Lipschitz domains},
which is a class of domains generalizing classical (strongly) Lipschitz domains.
In order to define mixed boundary conditions,
we partition the boundary of the domain into two complementary parts 
on which we impose essential or natural boundary conditions, respectively.
We assume only minimal regularity for the boundary partition.
This choice of geometric ambient has favorable properties.
On the one hand, 
the class of weakly Lipschitz domains is broad enough to contain (strongly) Lipschitz domains
and a large class of three-dimensional polyhedral domains that fail to be strongly Lipschitz,
such as the crossed brick domain \cite[Figure 3.1]{monk2003finite}.
On the other hand, 
many analytical results that are known for strongly Lipschitz domains,
such as the Rellich embedding theorem for differential forms,
still hold true over weakly Lipschitz domains (see \cite{axelsson2004hodge,bauer2015maxwell}). 
Notably, restricting to strongly Lipschitz domains 
would not simplify the mathematical derivations in this article.
In continuity with \cite{licht2016smoothed},
we define the smoothed projections over differential forms with coefficients in $L^{p}$ spaces
for $p \in [1,\infty]$ and consider the $W^{p,q}$ classes of differential forms
\cite{gol1982differential}.
\\

We give an outline of the stages that compose the smoothed projection.
Let $u \in L^{2}\Lambda^{k}(\Omega)$. 
First, an operator $E^{k} : L^{2}\Lambda^{k}(\Omega) \rightarrow L^{2}\Lambda^{k}(\Omega^{\rm e})$ 
extends $u$ over a neighborhood $\Omega^{\rm e}$ of $\Omega$.
The basic idea is extending the differential form by reflection,
but along $\Gamma_T$ we extend it by zero over a ``bulge'' attached to the domain.
$E^{k}$ commutes with the exterior derivative on $H\Lambda^{k}(\Omega,\Gamma_T)$.
Next, we construct a distortion $\frakD : \Omega^{\rm e} \rightarrow \Omega^{\rm e}$
which moves a neighborhood of the bulge into the latter but is the identity away from the bulge.
We locally control the amount of distortion.
The pullback $\frakD^{\ast} E^{k} u$ of $E^{k} u$ along $\frakD$ vanishes then in a neighborhood of $\Gamma_T$
and commutes with the exterior derivative.
Subsequently, we apply a regularization operator 
$R^{k} : L^{2}\Lambda^{k}(\Omega^{\rm e}) \rightarrow C^{\infty}\Lambda^{k}(\overline\Omega)$ 
which produces a smoothing of $\frakD^{\ast} E^{k} u$ 
that still vanishes in a neighborhood of $\Gamma_T$. 
This is based on the idea of taking the convolution with a smooth bump function.
In our case, however, the smoothing radius is locally controlled.

The canonical finite element interpolant 
$I^{k} : C^{\infty}\Lambda^{k}(\Omega) \rightarrow \calP\Lambda^{k}(\calT)$
is then applied to the regularized differential form $R^{k} \frakD^{\ast} E^{k} u$.
Since the latter vanishes near $\Gamma_T$, the resulting differential form is an element of $\calP\Lambda^{k}(\calT,\calU)$.
In combination, this yields an operator $Q^{k} : L^{2}\Lambda^{k}(\Omega) \rightarrow \calP\Lambda^{k}(\calT,\calU)$
that commutes with the exterior derivative on $H\Lambda^{k}(\Omega,\Gamma_T)$
and satisfies uniform $L^{2}$ bounds. 
But $Q^{k}$ is generally not idempotent.
To enforce idempotence, we bound the interpolation error over the finite element space
and apply the ''Sch\"oberl trick'' \cite{schoberl2005multilevel}.
This delivers the desired smoothed projection.
In particular, we prove the following main result. 

\begin{theorem}
 Let $\Omega \subseteq \bbR^{n}$ be a bounded weakly Lipschitz domain,
 and let $\Gamma_T \subseteq \partial\Omega$ be an admissible boundary patch 
 (in the sense of Section~\ref{sec:geometry}).
 Let $\calT$ be a simplicial triangulation of $\Omega$
 that contains a simplicial triangulation $\calU$ of $\Gamma_T$,
 and let \eqref{intro:FEderhamcomplex} be a finite element de~Rham complex 
 as in finite element exterior calculus \cite{AFW1}
 with essential boundary conditions along $\Gamma_T$.
 Then there exist bounded linear projections
 $\pi^{k} : L^{2}\Lambda^{k}(\Omega) \rightarrow \calP\Lambda^{k}(\calT,\calU)$
 such that the following diagram commutes:
 \begin{align}
  \begin{CD}
   H\Lambda^{0}(\Omega,\Gamma_T)
   @>\cartan>>
   H\Lambda^{1}(\Omega,\Gamma_T)
   @>\cartan>>
   \cdots
   @>\cartan>>
   H\Lambda^{n}(\Omega,\Gamma_T)
   \\
   @V{\pi^{0}}VV @V{\pi^{1}}VV @. @V{\pi^{n}}VV
   \\
   \calP\Lambda^{0}(\calT,\calU)
   @>\cartan>> 
   \calP\Lambda^{1}(\calT,\calU)
   @>\cartan>>
   \cdots 
   @>\cartan>> 
   \calP\Lambda^{n}(\calT,\calU).
  \end{CD}
 \end{align}
 Moreover, $\pi^{k} u = u$ for $u \in \calP\Lambda^{k}(\calT,\calU)$.
 The $L^{2}$ operator norm of $\pi^{k}$ is uniformly bounded in terms of 
 the maximum polynomial degree of \eqref{intro:FEderhamcomplex},
 the shape measure of the triangulation,
 and geometric properties of $\Omega$ and $\Gamma_T$.
\end{theorem}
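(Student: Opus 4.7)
The plan is to follow the five-stage construction already sketched in the introduction and then invoke the Schöberl correction to enforce idempotence. First, I would build an extension operator $E^{k} : L^{2}\Lambda^{k}(\Omega) \to L^{2}\Lambda^{k}(\Omega^{\rm e})$ onto an enlarged neighborhood $\Omega^{\rm e}$ of $\closure\Omega$. Using the weakly Lipschitz bi-Lipschitz charts that flatten the boundary, I would reflect the form across the natural boundary part $\Gamma_N$, while over $\Gamma_T$ I would glue a collar ``bulge'' to $\Omega$ and extend $u$ by zero there. The key compatibility to verify is that, on forms satisfying the partial boundary condition along $\Gamma_T$, the reflection/zero extension respects the exterior derivative, i.e.\ $\cartan E^{k} u = E^{k+1} \cartan u$ in the distributional sense on $\Omega^{\rm e}$. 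This is the analogue of the extension in \cite{licht2016smoothed}, but now it has to be carried out only on the $\Gamma_N$-piece and augmented by the bulge on $\Gamma_T$; the geometric admissibility of the boundary patch from Section~\ref{sec:geometry} is what makes this local-to-global construction possible.

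Next, I would introduce a Lipschitz distortion $\frakD : \Omega^{\rm e} \to \Omega^{\rm e}$ which is the identity away from $\Gamma_T$ and pushes a neighborhood of $\Gamma_T$ into the interior of the bulge. Then $\frakD^{\ast} E^{k} u$ vanishes in a neighborhood of $\Gamma_T$, still commutes with $\cartan$ (pullbacks always do), and its local distortion scale can be made proportional to the local mesh size $h_{T}$ via a Lipschitz partition-of-unity construction subordinate to $\calT$. Composing with a mollification $R^{k}$ that uses a locally controlled smoothing radius (again tuned to $h_{T}$ through a Lipschitz radius function as in \cite{christiansen2008smoothed,licht2016smoothed}) yields a form $R^{k}\frakD^{\ast} E^{k} u \in C^{\infty}\Lambda^{k}(\closure\Omega)$ which still vanishes near $\Gamma_T$; because the smoothing radius is chosen small enough relative to the width of the $\Gamma_T$-vanishing region pushed out by $\frakD$, this vanishing is preserved. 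Applying the canonical interpolant $I^{k}$ then lands in $\calP\Lambda^{k}(\calT,\calU)$ because all degrees of freedom associated with simplices in $\calU$ vanish. The composition $Q^{k} := I^{k} R^{k} \frakD^{\ast} E^{k}$ is uniformly $L^{2}$-bounded and commutes with $\cartan$.

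The remaining obstacle is that $Q^{k}$ is not a projection. Here I would use the Schöberl trick: on the finite-dimensional space $\calP\Lambda^{k}(\calT,\calU)$ the restriction $J^{k} := Q^{k}{\restriction}_{\calP\Lambda^{k}(\calT,\calU)}$ is close to the identity in operator norm provided the regularization radius is chosen small enough compared to $h_{T}$, with a bound depending only on the shape measure and the polynomial degree. In particular $J^{k}$ is invertible on the finite element space, and one checks that $\cartan J^{k} = J^{k+1} \cartan$ on the finite element complex, so that $(J^{k})^{-1}$ commutes with $\cartan$ as well. Defining $\pi^{k} := (J^{k})^{-1} Q^{k}$ produces a bounded projection that commutes with $\cartan$ and agrees with the identity on $\calP\Lambda^{k}(\calT,\calU)$.

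The main obstacle is quantifying the locally-scaled regularization so that three properties hold simultaneously: (i) the $\Gamma_T$-vanishing region survives the mollification, (ii) $Q^{k}$ maps into the correct finite element subspace (this requires that the effective support of the mollifier at a simplex $T$ stays in the ``macro-patch'' used for the degrees of freedom on $T$), and (iii) $\|J^{k} - \Id\|$ is bounded by a constant strictly less than one, uniformly in $h$, in the shape measure, and in the polynomial degree. All three ingredients feed back into a single choice of the smoothing radius function and of the distortion magnitude; the balancing of these scales, in the presence of the extra $\Gamma_T$ geometry, is the heart of the construction and the place where the admissibility hypotheses on $\Gamma_T$ are used most essentially.
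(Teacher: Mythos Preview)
Your proposal is correct and follows essentially the same five-stage architecture as the paper: extension $E^{k}$, distortion pullback $\frakD^{\ast}$, locally scaled mollification $R^{k}$, canonical interpolation $I^{k}$, and the Sch\"oberl correction $(J^{k})^{-1}$. One small clarification on the extension step: the paper does not reflect across $\Gamma_N$ and separately zero-extend across $\Gamma_T$; instead it first attaches the bulge $\Upsilon$ along $\Gamma_T$ and extends by zero to the bulged domain $\Omega^{\rm b}$, and \emph{then} applies a single reflection across the entire boundary $\partial\Omega^{\rm b}$, which avoids having to glue two different extension mechanisms along the interface $\Gamma_I$. Also, the mesh-scale control is realized via a single smooth mesh-size function $\mathtt h$ with $\varrho = \eps\mathtt h$ rather than a partition of unity, but this is a cosmetic difference.
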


The calculus of differential forms has emerged as a unifying language 
for finite element methods for problems in vector analysis 
\cite{hiptmair2002finite} and commuting projections play an important role in that context.
A bounded projection operator that commutes with the exterior derivative 
up to a controllable error was derived in \cite{christiansen2007stability}.
A bounded commuting projection operator for the de~Rham complex without 
boundary conditions has been derived in \cite{AFW1} in the case of quasi-uniform triangulations,
which was subsequently generalized in \cite{christiansen2008smoothed}
to shape-uniform triangulations and de~Rham complexes with full boundary conditions.
The ideas in those contributions were extended to smoothed projections 
over weakly Lipschitz domains in \cite{licht2016smoothed},
which we take as our point of departure.
The existence of a smoothed projection that respects partial boundary conditions
has been an unproven conjecture in \cite{bonizzoni2014moment}.
A local bounded interpolation operator was presented in \cite{schoberl2008posteriori}
in the language of classical vector analysis;
this result was generalized to differential forms in \cite{demlow2014posteriori},
and a variant of this operator that preserves partial boundary condition
was given in \cite{gopalakrishnan2011partial}.
A local commuting projection is developed in \cite{falk2014local}.
We refer to \cite{christiansen2011topics,ern2016mollification,gopalakrishnan2012commuting}
for further approaches on the topic.
\\

In addition to this research in numerical analysis, we contribute a result to functional analysis.
Specifically, we prove that smooth differential forms over a weakly Lipschitz domain $\Omega$
which vanish near $\Gamma_T$ are dense in $H\Lambda^{k}(\Omega,\Gamma_T)$ for $p,q \in [1,\infty)$.
When $\Omega$ is a strongly Lipschitz domain and $\Gamma_D \subseteq \partial\Omega$ is a suitable boundary patch, 
then the density of $C^{\infty}(\overline\Omega) \cap H^{1}(\Omega,\Gamma_D)$ in $H^{1}(\Omega,\Gamma_D)$ (see \cite{doktor1973density,doktor2006density}) 
and analogous density results for differential forms with partial boundary conditions (see \cite{jakab2009regularity})
are known. 
The following generalization to weakly Lipschitz domains, however, has been not available in the literature yet.

\begin{lemma}
 Let $\Omega$ be a bounded weakly Lipschitz domain 
 and let $\Gamma_T$ be an admissible boundary patch
 (in the sense of Section~\ref{sec:geometry}).
 Then the smooth differential $k$-forms in $C^{\infty}\Lambda^{k}(\overline\Omega)$
 that vanish near $\Gamma_T$
 constitute a dense subset of $H\Lambda^{k}(\Omega,\Gamma_T)$. 
\end{lemma}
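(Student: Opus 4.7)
The plan is to build explicit smooth approximants by reusing the extension, distortion and regularization stages outlined in the introduction, but with a single scale parameter $\epsilon > 0$ in place of a mesh scale; letting $\epsilon \downarrow 0$ then yields the density statement.

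Fix $u \in H\Lambda^k(\Omega,\Gamma_T)$. First I apply the extension $E^k$ to obtain a form on the enlarged domain $\Omega^{\rm e}$; because the extension across $\Gamma_T$ is by zero into the attached bulge, the partial boundary condition on $u$ is precisely what makes $E^k$ commute with $\cartan$. Second, I pull back by a bi-Lipschitz distortion $\frakD_\epsilon : \Omega^{\rm e} \to \Omega^{\rm e}$ that equals the identity outside an $\epsilon$-tube of $\Gamma_T$ while pushing any point within distance $c\epsilon$ of $\Gamma_T$ in $\Omega$ strictly into the bulge; by naturality $\frakD_\epsilon^*$ commutes with $\cartan$, and $\frakD_\epsilon^* E^k u$ vanishes on an open neighborhood of $\Gamma_T$ in $\Omega$. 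Third, I mollify with a standard convolution $R_\delta$ of radius $\delta \ll \epsilon$, obtaining $v_{\epsilon,\delta} := (R_\delta \frakD_\epsilon^* E^k u)|_\Omega$, which lies in $C^\infty\Lambda^k(\overline\Omega)$, vanishes in a neighborhood of $\Gamma_T$, and satisfies $\cartan v_{\epsilon,\delta} = (R_\delta \frakD_\epsilon^* E^k \cartan u)|_\Omega$.

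Convergence in the graph norm then follows by a two-stage limit. For fixed $\epsilon$, classical mollifier theory gives $v_{\epsilon,\delta} \to \frakD_\epsilon^* E^k u|_\Omega$ and its exterior derivative to the analogous limit in $L^2$ as $\delta \to 0$. As $\epsilon \to 0$, the distortion collapses to the identity pointwise almost everywhere while its Jacobian determinants stay uniformly bounded, so the family $\{\frakD_\epsilon^*\}_\epsilon$ is uniformly bounded on $L^2$; the convergence $\frakD_\epsilon^* E^k u \to E^k u$ in $L^2$ then follows by first approximating $E^k u$ and $\cartan E^k u$ by forms smooth on $\Omega^{\rm e}$ (classical interior density without boundary conditions) so that pointwise almost-everywhere convergence is immediate, and by transferring the conclusion to $u$ via uniform boundedness. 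A diagonal choice $v_{\epsilon,\delta(\epsilon)}$ supplies the required approximants.

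The hard part is arranging the family $\frakD_\epsilon$ so that its Lipschitz constants and those of its inverses stay $O(1)$ uniformly in $\epsilon$ under the mere admissibility hypothesis on $\Gamma_T$. No global tubular neighborhood is available; one must instead assemble $\frakD_\epsilon$ from the bi-Lipschitz charts witnessing admissibility near $\partial\Omega$, using a partition of unity subordinate to such an atlas and a short collar field in each chart, and then verify that the chart constants control the distortion uniformly, in particular near the relative boundary of $\Gamma_T$ in $\partial\Omega$. Once this geometric construction is in place, the rest of the argument is standard functional analysis.
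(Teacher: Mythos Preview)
Your approach is essentially the paper's: compose the extension $E^k$, the distortion pullback $\frakD_\eps^*$, and a mollification, then send $\eps \to 0$. Two points deserve comment. First, you misjudge the hard part: a global Lipschitz collar of $\partial\Upsilon$ \emph{is} available for bounded weakly Lipschitz domains (this is Theorem~2.3 of \cite{licht2016smoothed}, invoked in the proof of Theorem~\ref{prop:distortionmapping}), so the family $\frakD_\eps$ with uniform bi-Lipschitz bounds is already constructed and no chart-by-chart patching is required. Second, the paper's convergence argument avoids your two-stage limit and the almost-everywhere Jacobian analysis. Since $\frakD_\eps$ is the identity outside an $O(\eps)$-collar of $\partial\Upsilon$ by property~\eqref{prop:distortionmapping:identityaway}, the single-parameter operator $M^k_\eps := R^k_{\delta\eps}\frakD_\eps^* E^k$ (with $\delta$ a fixed fraction) reduces to classical mollification of $u$ on $\Omega''_\eps := \Omega \setminus B_{L_D\eps}(\partial\Omega)$, where convergence is standard; on the remaining strip $\Omega'_\eps$ the local bound~\eqref{prop:regularizer:main:localbound} controls $\|u - M^k_\eps u\|_{L^p(\Omega'_\eps)}$ by a constant times $\|u\|_{L^p(\Omega'_{C\eps})}$, which tends to zero with the measure of the strip. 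Your route is also valid, but it trades this splitting for a dominated-convergence argument on Jacobians plus a diagonal extraction, and it overlooks machinery the paper already supplies.
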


The remainder of this article is structured as follows.
In Section~\ref{sec:differentialforms}, we review the calculus of differential forms. 
In Section~\ref{sec:geometry}, we introduce the geometric setting and the extension operator.
In Section~\ref{sec:distortion}, we devise the distortion mapping. 
In Section~\ref{sec:mollification}, we combine these constructions 
to obtain the mollification operator. 
In Section~\ref{sec:projection}, we devise the smoothed projection.
Finally, Section~\ref{sec:application} outlines applications to the convergence theory
of finite element methods.

%
%

\section{Lipschitz Analysis and Differential Forms}
\label{sec:differentialforms}

In this section we recall background material in several fields of analysis.
This includes a summary of Section 3 of \cite{licht2016smoothed}
with an additional discussion of differential forms over domains  
that satisfy homogeneous boundary conditions along subsets of the boundary.
We draw from sources in Lipschitz analysis \cite{luukkainen1977elements},
geometric measure theory \cite{federer2014geometric},
the calculus of differential forms over domains \cite{LeeSmooth},
and differential forms with coefficients in $L^{p}$ spaces \cite{gol1982differential,GMM}.

\subsection{Analytical Preliminaries}

We recall some basic notions. 
Unless mentioned otherwise,
we let every subset $A \subseteq \bbR^{n}$, $n \in \bbN_0$,
be equipped with the canonical Euclidean norm $\|\cdot\|$.
For any set $U \subseteq \bbR^{n}$ and $r > 0$ we write $B_r(U)$
for the closed Euclidean $r$-neighborhood of $U$ 
and set $B_r(x) := B_r(\{x\})$.
Moreover, $\vol^{m}(A)$ is the $m$-dimensional exterior Hausdorff measure of any subset $A \subset \bbR^{m}$.

Let $U \subseteq \bbR^{n}$ and $V \subseteq \bbR^{m}$,
and let $\Phi : U \rightarrow V$ be a mapping.
For a subset $A \subseteq U$ we define $\Lip(\Phi,A) \in [0,\infty]$
as the minimal member of $[0,\infty]$
such that 
\begin{align*}
 \forall x, y \in A :
 \left\| \Phi(x) - \Phi(y) \right\| \leq \Lip(\Phi,A) \| x - y \|
 .
\end{align*}
We call $\Lip(\Phi,A)$ the \emph{Lipschitz constant} of $\Phi$ over $A$
and we simply write $\Lip(\Phi) := \Lip(\Phi,A)$ if $A$ is understood.
We say that $\Phi$ is Lipschitz if $\Lip(\Phi,U) < \infty$.
If $\Phi : U \rightarrow V$ is invertible,
then we call $\Phi$ \emph{bi-Lipschitz} 
if both $\Phi$ and $\Phi^{\inv}$ are Lipschitz.
We call $\Phi$ a \emph{LIP embedding} if $\Phi : U \rightarrow \Phi(U)$
is bijective and locally bi-Lipschitz.

\subsection{Differential Forms}

Let $U \subseteq \bbR^n$ be an open set.
We let $M(U)$ denote the space of Lebesgue-measurable functions over $U$. 
For $k \in \bbZ$ we let $M\Lambda^{k}(U)$ be the space of 
differential $k$-forms over $U$ with coefficients in $M(U)$. 
For $u \in M\Lambda^{k}(U)$ and $v \in M\Lambda^{l}(U)$
we let $u \wedge v \in M\Lambda^{k+l}(U)$
denote the \emph{exterior product} of $u$ and $v$.
We let $C^{\infty}\Lambda^{k}(U)$ be the space of smooth differential forms over $U$, 
we let $C^{\infty}\Lambda^{k}(\overline U)$ denote the space of smooth differential forms over $U$ 
that are restrictions of members of $C^{\infty}\Lambda^{k}(\bbR^{n})$,
and we let $C^{\infty}_{c}\Lambda^{k}(U)$ be the subspace of $C^{\infty}\Lambda^{k}(\overline U)$ 
whose members have compact support.

We let $\cartanx^1, \ldots, \cartanx^n \in M\Lambda^{1}(U)$
be the constant $1$-forms that represent the $n$ coordinate directions.
We let $\langle u, v \rangle \in M(U)$ denote the pointwise $\ell^{2}$ product 
of two measurable differential $k$-forms $u, v \in M\Lambda^{k}(U)$
(see Equation (3.3) in \cite{licht2016smoothed} for a definition).
Accordingly, we let $|u| = \sqrt{\langle u, u \rangle} \in M(U)$
be the pointwise $\ell^{2}$ norm of $u \in M(U)$.
The \emph{Hodge star operator} 
$\star : M\Lambda^{k}(U) \rightarrow M\Lambda^{n-k}(U)$
is a linear mapping that is uniquely defined by the identity
\begin{align}
 \label{math:hodgestar}
 u \wedge \star v = \langle u, v \rangle \; \cartanx^{1} \wedge \dots \wedge \cartanx^{n}
 \quad 
 u, v \in M\Lambda^{k}(U).
\end{align}
We recall the \emph{exterior derivative} 
$\cartan : C^{\infty}\Lambda^{k}(U) \rightarrow C^{\infty}\Lambda^{k+1}(U)$
of smooth differential forms. 
More generally,
if $u \in M\Lambda^{k}(U)$ and $w \in M\Lambda^{k+1}(U)$ such that 
\begin{align}
 \label{math:distributionalexteriorderivative}
 \int_U w \wedge v
 =
 (-1)^{k+1} \int_U u \wedge \cartan v,
 \quad 
 v \in C_c^{\infty}\Lambda^{n-k-1}(U),
\end{align}
then we call $w$ the \emph{weak exterior derivative} of $u$
and write $\cartan u := w$.
Note that $w$ is unique up to equivalence almost everywhere in $U$,
and that the weak exterior derivative of $u \in C^{\infty}\Lambda^{k}(\overline U)$
agrees with the (strong) exterior derivative almost everywhere.
The Hodge star enters the definition of the \emph{exterior codifferential},
which is a differential operator given (in the strong sense) by 
\begin{align*}
 \delta : 
 C^{\infty}\Lambda^{k}(U) \rightarrow C^{\infty}\Lambda^{k-1}(U),
 \quad 
 u \mapsto (-1)^{k(n-k)+1} \star \cartan \star u.
\end{align*}
A weak exterior codifferential can be defined analogously.

\subsection{$W^{p,q}$ differential forms}
We work with differential forms 
whose coefficients are contained in Lebesgue spaces. 
We let $L^{p}(U)$ denote the Lebesgue space over $U$ with exponent $p \in [1,\infty]$
and let $L^{p}\Lambda^{k}(U)$ be the Banach space of differential $k$-forms
with coefficients in $L^{p}(U)$,
together with the norm
\begin{align*}
 \left\| u \right\|_{L^{p}\Lambda^{k}(U)}
 :=
 \big\|
  |u| 
 \big\|_{L^{p}(U)},
 \quad
 u \in L^{p}\Lambda^{k}(U),
 \quad 
 p \in [1,\infty].
\end{align*}
For $p, q \in [1,\infty]$, 
we let $W^{p,q}\Lambda^{k}(U)$ be the Banach space of differential $k$-forms in $L^{p}\Lambda^{k}(U)$
with weak exterior derivative in $L^{q}\Lambda^{k+1}(U)$.
Its natural norm is 
\begin{align}
 \| u \|_{W^{p,q}\Lambda^{k}(U)}
 :=
 \| u \|_{L^{p}\Lambda^{k}(U)}
 +
 \| \cartan u \|_{L^{q}\Lambda^{k+1}(U)}.
\end{align}
Since the exterior derivative of an exterior derivative is zero,
we have for every choice of $p,q,r \in [1,\infty]$ the inclusion 
\begin{gather*}
 \cartan W^{p,q}\Lambda^{k}(U) \subseteq W^{q,r}\Lambda^{k+1}(U)
 .
\end{gather*}

\begin{example}
 The space $W^{1,1}\Lambda^{k}(U)$ contains all integrable differential $k$-forms over $U$
 with integrable weak exterior derivative. 
 If $U$ is bounded, then all other spaces $W^{p,q}\Lambda^{k}(U)$ embed into $W^{1,1}\Lambda^{k}(U)$. 
 The space $W^{2,2}\Lambda^{k}(U)$, consisting of square-integrable differential $k$-forms 
 with square-integrable exterior derivatives,
 has a Hilbert space structure equivalent to its Banach space structure 
 and is often written $H\Lambda^{k}(U)$ in the literature.
 The space $W^{\infty,\infty}\Lambda^{k}(U)$ comprises those essentially bounded differential $k$-forms
 whose exterior derivative is essentially bounded; these are called \emph{flat differential forms}
 in geometric measure theory \cite{federer2014geometric,whitney2012geometric}.
\end{example}

We are particularly interested in spaces 
of differential forms that satisfy homogeneous boundary conditions along a subset $\Gamma$ of the boundary $\partial U$.
We call these \emph{partial boundary conditions}.
Following Definition 3.3 of \cite{GMM}, we discuss boundary conditions 
with the intuition that a differential form with weak exterior derivative 
satisfies homogeneous boundary conditions along the boundary part $\Gamma$
if and only if its extension by zero still has a weak exterior derivative along that boundary part.

Formally, 
when $\Gamma \subseteq \partial U$ is a relatively open subset of $\partial U$, 
then we define the space $W^{p,q}\Lambda^k(U,\Gamma)$ as 
the subspace of $W^{p,q}\Lambda^{k}(U)$ whose members adhere to the following condition: 
we have $u \in W^{p,q}\Lambda^k(U,\Gamma)$ if and only if
for all $x \in \Gamma$ there exists $r > 0$ such that
\begin{align}
 \label{math:ana:bc}
 \int_{U \cap B_{r}(x)} u \wedge \cartan v
 =
 \int_{U \cap B_{r}(x)} {\cartan u} \wedge v,
 \quad 
 v \in C^{\infty}_c\Lambda^{n-k-1}\left(\mathring B_{r}(x)\right).
\end{align}
The definition implies that $W^{p,q}\Lambda^k(U,\Gamma)$
is a closed subspace of $W^{p,q}\Lambda^{k}(U)$,
and hence a Banach space of its own.
We also say that $u \in W^{p,q}\Lambda^k(U,\Gamma)$
satisfies partial boundary conditions along $\Gamma$.
One consequence of the definition is 
\begin{align}
 \cartan W^{p,q}\Lambda^{k}(U,\Gamma) \subseteq W^{q,r}\Lambda^{k+1}(U,\Gamma),
 \quad 
 p,q,r \in [1,\infty].
\end{align}
In other words, 
a differential form which satisfies boundary conditions along $\Gamma$
has an exterior derivative satisfying boundary conditions along $\Gamma$.

\begin{remark}
 For example, $W^{p,q}\Lambda^{k}(U,\partial U)$ is the subspace of $W^{p,q}\Lambda^{k}(U)$
 whose member's extension to $\bbR^{n}$ by zero gives a member of $W^{p,q}\Lambda^{k}(\bbR^{n})$.
 If the domain boundary is sufficiently regular, 
 then an equivalent notion of homogeneous boundary conditions 
 uses generalized trace operators \cite{mitrea2002traces,weck2004traces}.
 This article does not address inhomogeneous boundary conditions.
\end{remark}

We finish this section with some results on the behavior of differential forms
under bi-Lipschitz coordinate changes.
Suppose that $U, V \subseteq \bbR^{n}$ are connected open sets
and let $\Phi : U \rightarrow V$ be a bi-Lipschitz mapping.
The pullback of $u \in M\Lambda^{k}(V)$ along $\Phi$ 
is the differential form $\Phi^{\ast}u \in M\Lambda^{k}(U)$
given at $x \in U$ by 
\begin{align}
 \label{math:pullback:definition}
 \Phi ^{\ast} u_{|x}( \nu_1, \dots, \nu_k )
 :=
 u_{|\Phi (x)} ( \Jacobian\Phi_{|x} \cdot \nu_1, \dots, \Jacobian\Phi_{|x} \cdot \nu_k ),
 \quad
 \nu_{1},\dots,\nu_{k} \in \bbR^{n}.
\end{align}
The pullback commutes with the exterior derivative in the sense that 
$\cartan \Phi^{\ast} u = \Phi^{\ast} \cartan u$ 
whenever $u \in M\Lambda^{k}(U)$ has a weak exterior derivative.
The pullback along bi-Lipschitz mappings also preserves the $L^{p}$ classes of differential forms.
We henceforth write $1/\infty := 0$.
One can show that for $p \in [1,\infty]$ and $u \in L^{p}\Lambda^{k}(V)$ 
we have $\Phi^{\ast} u \in L^{p}\Lambda^{k}(U)$ with 
\begin{align}
  \label{math:pullbackestimate}
  \begin{split}
  \| \Phi ^{\ast} u \|_{L^{p}\Lambda^{k}(U)}
  &\leq 
  \left\| \Jacobian\Phi \right\|^{k}_{L^{\infty}(U,\bbR^{n\times n})}
  \left\| \Jacobian\Phi^{\inv} \right\|^{\frac{n}{p}}_{L^{\infty}(V,\bbR^{n\times n})}
  \| u \|_{L^{p}\Lambda^{k}(V)}
  .
  \end{split}
\end{align}
Moreover, the $W^{p,q}$ classes of differential forms are preserved under pullbacks:
for $p,q \in [1,\infty]$ and $u \in W^{p,q}\Lambda^{k}(V)$
we have $\Phi^{\ast} u \in W^{p,q}\Lambda^{k}(U)$.
This follows immediately from \eqref{math:pullbackestimate}
via the commutativity of the pullback and the exterior derivative.

%
%

\section{Boundary Partitions of Weakly Lipschitz Domains}
\label{sec:geometry}

In this section we discuss weakly Lipschitz domains and boundary partitions with minimal regularity assumptions.
We provide the geometric background and introduce commuting extension operators that take into account partial boundary conditions. 
Our discussion of weakly Lipschitz domains and boundary partitions is based on \cite{GMM}.

\subsection{Weakly Lipschitz Domains and Boundary Partitions}
Let $\Omega \subseteq \bbR^{n}$ be a domain.
We call $\Omega$ a \emph{weakly Lipschitz domain}
if every $x \in \partial\Omega$ has a closed neighborhood
$U_x \subseteq \bbR^{n}$ for which there exists a bi-Lipschitz mapping
$\varphi_x : U_x \rightarrow [-1,1]^n$ such that $\varphi_x(x) = 0$ and 
\begin{subequations}
\label{math:weaklylipschitzdomain}
\begin{align}
 \label{math:weaklylipschitzdomain:condition1}
 \quad \varphi_x( \Omega   \cap U_x ) &= [-1,1]^{n-1} \times [-1,0)
 , \\
 \label{math:weaklylipschitzdomain:condition2}
 \quad \varphi_x( \partial\Omega \cap U_x ) &= [-1,1]^{n-1} \times \{0\}
 , \\
 \label{math:weaklylipschitzdomain:condition3}
 \quad \varphi_x( \overline\Omega^c \cap U_x ) &= [-1,1]^{n-1} \times (0,1]
 .
\end{align}
\end{subequations}

\begin{remark}
 In other words, $\Omega$ is a weakly Lipschitz domain 
 if its boundary can be flattened locally by a bi-Lipschitz mapping. 
 For example, every Lipschitz domain (a domain whose boundary 
 can be written locally as the graph of a Lipschitz function)
 is also a weakly Lipschitz domain. The converse is generally false,
 and a well-known counter example are the ``crossed bricks'' \cite[p.39]{monk2003finite}. 
 But every polyhedral domain in $\bbR^{3}$ is a weakly Lipschitz domain \cite[Theorem 4.1]{licht2016smoothed}.
\end{remark}

Next we introduce the geometric background for the discussion of boundary conditions. 
We assume that $\Gamma_{T} \subseteq \partial\Omega$ is a relatively open subset of the boundary.
We write $\Gamma_{N} := \partial\Omega \setminus \overline{\Gamma_T}$ for the complementary relatively open boundary part
and we write $\Gamma_{I} := \overline{\Gamma_{T}} \cap  \overline{\Gamma_{N}}$ for the interface 
between the boundary parts.
We call $\Gamma_T$ an \emph{admissible boundary patch} 
if for every $x \in \Gamma_I$ there exists a bi-Lipschitz mapping 
$\varphi_x : U_x \rightarrow [-1,1]$
such that \eqref{math:weaklylipschitzdomain} holds and such that additionally 
\begin{subequations}
\label{math:admissiblepartition} 
\begin{align}
 \label{math:admissiblepartition:condition1}
 \varphi_x( \Gamma_T   \cap U_x )
 &=
 [-1,1]^{n-2} \times [-1,0) \times \{0\}
 , \\
 \label{math:admissiblepartition:condition2}
 \varphi_x( \Gamma_I \cap U_x )
 &=
 [-1,1]^{n-2} \times \{0\} \times \{0\}
 , \\
 \label{math:admissiblepartition:condition3}
 \varphi_x( \Gamma_N \cap U_x )
 &=
 [-1,1]^{n-2} \times (0,1] \times \{0\}
 .
\end{align}
\end{subequations}
We also call the triple $\left( \Gamma_T,\Gamma_I,\Gamma_N \right)$ an \emph{admissible boundary partition}. 
Note that 
$\Gamma_T$ is an admissible boundary patch 
if and only if 
$\Gamma_N$ is an admissible boundary patch.

\begin{remark}
 A weakly Lipschitz domain is the interior of a locally flat $n$-dimensional Lipschitz submanifold of $\bbR^{n}$ with boundary.
 In particular, $\partial\Omega$ is a locally flat Lipschitz submanifold of dimension $n-1$ without boundary.
 The tuple $(\Gamma_T,\Gamma_I,\Gamma_N)$ being an admissible boundary partition 
 means that $\Gamma_T$ and $\Gamma_N$ are the interior of locally flat Lipschitz submanifolds of dimension $n-1$ 
 of $\partial\Omega$ with common boundary $\Gamma_I := \partial\Gamma_T = \partial\Gamma_N$.
 In turn, $\Gamma_I$ is a Lipschitz submanifold of dimension $n-2$ without boundary. 
 This is in accordance with Definition 3.7 of \cite{GMM}, when Remark 3.2 in that reference is taken into account.
\end{remark}

\subsection{Commuting Extension Operators}
In the remainder of this section we construct a commuting extension operator. 
The basic idea comprises two steps.
First, any differential form over $\Omega$
is extended by zero onto a bulge domain $\Upsilon$ attached to $\Omega$ along $\Gamma_T$.
Second, the thus extended differential form over this enlarged domain 
is extended again to an even larger domain $\Omega^{\rm e}$
via a reflection along the boundary. 
\\

We begin with some geometric definitions and results. 
A tubular neighborhood of $\Omega$ with Lipschitz regularity is obtained from Theorem~2.3 of \cite{licht2016smoothed}. 
Specifically, 
there exists a LIP embedding $\Psi^{0} : \partial\Omega \times [-1,1] \rightarrow \bbR^{n}$
satisfying 
\begin{subequations}
\label{math:omegacollar}
\begin{gather}
 \label{math:omegacollar:boundary}
 \forall x \in \partial\Omega : \Psi^{0}\left( x, 0 \right) = x, 
 \\
 \label{math:omegacollar:claimedinclusions}
 \Psi^{0}\left( \partial\Omega \times [-1,0) \right) \subseteq \Omega,
 \quad
 \Psi^{0}\left( \partial\Omega \times ( 0,1] \right) \subseteq \overline\Omega^{c}
 .
\end{gather}
\end{subequations}
We then introduce the auxiliary domains 
\begin{gather}
 \label{math:bulgeddomains}
 \Upsilon := \Psi^{0}\left( \Gamma_T \times [0,\niceonehalf) \right),
 \quad 
 \Omega^{\rm b} := \Omega \cup \Gamma_T \cup \Upsilon.
\end{gather}
We think of $\Upsilon$ as a \textit{bulge} attached to the domain $\Omega$ along $\Gamma_T$,
which results in the combined domain $\Omega^{\rm b}$.
It is easily verified that both $\Upsilon$ and $\Omega^{\rm b}$ 
are again weakly Lipschitz domains;
see also Figure~\ref{fig:bulgeddomain} for a visualization 
of the construction.

The fact that $\Omega^{\rm b}$ is a weakly Lipschitz domain
allows us to apply Theorem~2.3 of \cite{licht2016smoothed} again 
to construct a tubular neighborhood with Lipschitz regularity:
we obtain a LIP embedding 
$\Psi^{\rm b} : \partial\Omega^{\rm b} \times [-1,1] \rightarrow \bbR^{n}$
that satisfies 
\begin{subequations}
\label{math:bulgecollar}
\begin{gather}
 \label{math:bulgecollar:boundary}
 \forall x \in \partial\Omega^{\rm b} : \Psi^{\rm b}\left( x, 0 \right) = x, 
 \\
 \label{math:bulgecollar:claimedinclusions}
 \Psi^{\rm b}\left( \partial\Omega^{\rm b} \times [-1,0) \right) \subseteq \Omega^{\rm b},
 \quad
 \Psi^{\rm b}\left( \partial\Omega^{\rm b} \times ( 0,1] \right) \subseteq \overline{ \Omega^{\rm b} }^{c}
 .
\end{gather}
\end{subequations}
We now define the additional auxiliary domains 
\begin{gather}
 \calC\Omega^{\rm b} 
 :=
 \Psi^{\rm b}( \partial\Omega \times (-1,1) )
 ,
 \quad 
 \Omega^{\rm e} 
 :=
 \Omega^{\rm b} 
 \;\cup\;
 \Psi^{\rm b}\left( \partial\Omega^{\rm b} \times [0,1) \right)
 ,
\end{gather}
which by Theorem~2.3 of \cite{licht2016smoothed}
can be assumed to be weakly Lipschitz domains without loss of generality.
We are now in the position to define the main result of this section:
a commuting extension operator preserving partial boundary conditions.

\begin{theorem}
 \label{prop:extensionoperator}
 There exists a bounded linear operator 
 \begin{gather*}
  E^{k} : L^{p}\Lambda^{k}(\Omega) \rightarrow L^{p}\Lambda^{k}(\Omega^{\rm e}),
  \quad 
  p \in [1,\infty], 
 \end{gather*}
 such that $E^{k} u$ vanishes over $\Upsilon$ for every $u \in L^{p}\Lambda^{k}(\Omega)$. 
 Moreover, for all $p,q \in [1,\infty]$ and $u \in W^{p,q}\Lambda^{k}(\Omega,\Gamma_T)$
 we have
 \begin{gather*}
  E^{k} u \in W^{p,q}(\Omega^{\rm e}),
  \quad 
  \cartan E^{k} u = E^{k+1} \cartan u
  .
 \end{gather*}
 Lastly, 
 there exist $L_{E} \geq 1$ and $C_{E} \geq 1$
 such that for every $p \in [1,\infty]$, 
 every $\delta > 0$,
 and every measurable $A \subseteq \overline\Omega$
 we have 
 \begin{align}
  \label{prop:extensionoperator:localbound}
  \| E^{k} u \|
  _{L^{p}\Lambda^{k}\left( B_{\delta}(A) \cap \Omega^{\rm e} \right)}
  \leq 
  \left( 1 + C_{E}^{ k + \frac{n}{p} } \right)
  \| u \|
  _{L^{p}\Lambda^{k}\left( B_{\delta L_{E}}(A) \cap \overline\Omega \right)}
  ,
  \quad 
  u \in L^{p}\Lambda^{k}(\Omega)
  .
 \end{align}
\end{theorem}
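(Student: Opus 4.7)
The plan is the two-stage scheme described in the paragraph preceding the theorem: first extend a form from $\Omega$ by zero across $\Gamma_T$ into the bulge $\Upsilon$ to obtain a form on $\Omega^{\rm b}$, then reflect across $\partial\Omega^{\rm b}$ via the collar $\Psi^{\rm b}$ to obtain a form on $\Omega^{\rm e}$. The $L^{p}$ boundedness, the commuting property, and the local bound will each be decomposed piece by piece along this construction.

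For the first stage, I would define $\tilde E^{k} u \in L^{p}\Lambda^{k}(\Omega^{\rm b})$ by setting $\tilde E^{k} u = u$ on $\Omega$ and $\tilde E^{k} u = 0$ on $\Upsilon$. The $L^{p}$ bound is trivial. To show that $\tilde E^{k}$ maps $W^{p,q}\Lambda^{k}(\Omega,\Gamma_T)$ into $W^{p,q}\Lambda^{k}(\Omega^{\rm b})$ and commutes with $\cartan$, I need to check that $\cartan\tilde E^{k} u = \tilde E^{k+1}\cartan u$ weakly on $\Omega^{\rm b}$. Given a test form $\phi \in C_{c}^{\infty}\Lambda^{n-k-1}(\Omega^{\rm b})$, I would decompose $\phi$ using a finite partition of unity subordinate to a cover of $\supp(\phi)$ by open balls, chosen so that each ball either sits inside $\Omega$ (where the standard weak derivative of $u$ applies) or is a ball $B_{r}(x)$ around some $x \in \Gamma_T$ satisfying the local integration-by-parts identity \eqref{math:ana:bc} for $u$. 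Since $\tilde E^{k} u$ vanishes on $\Upsilon$, the contribution from each such ball reduces exactly to \eqref{math:ana:bc}, which is precisely the statement needed.

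For the second stage, I would use the collar $\Psi^{\rm b}$ from \eqref{math:bulgecollar} to define the reflection $\rho\bigl(\Psi^{\rm b}(x,t)\bigr) := \Psi^{\rm b}(x,-t)$ on $\Psi^{\rm b}\bigl(\partial\Omega^{\rm b} \times [0,1)\bigr)$, which is bi-Lipschitz onto an open subset of $\Omega^{\rm b}$. Then set $E^{k} u := \tilde E^{k} u$ on $\Omega^{\rm b}$ and $E^{k} u := \rho^{\ast} \tilde E^{k} u$ on $\Psi^{\rm b}\bigl(\partial\Omega^{\rm b} \times (0,1)\bigr)$. The pullback estimate \eqref{math:pullbackestimate} and the commutativity of pullback with $\cartan$ immediately yield $L^{p}$ boundedness and the commuting property on each piece; compatibility of the two definitions across the codimension-one interface $\partial\Omega^{\rm b}$ is automatic because $\rho$ restricts to the identity there, which gives the weak-derivative identity across the interface via another partition-of-unity argument against test forms on $\Omega^{\rm e}$. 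Since $\rho$ fixes $\Omega^{\rm b}$ pointwise and $\tilde E^{k} u$ vanishes on $\Upsilon$, so does $E^{k} u$.

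The local bound \eqref{prop:extensionoperator:localbound} follows by splitting $B_{\delta}(A)\cap\Omega^{\rm e}$ into its intersection with $\Omega^{\rm b}$, which produces a direct contribution of $\|u\|_{L^{p}\Lambda^{k}(B_{\delta}(A)\cap\Omega)}$, and its intersection with the exterior collar, where \eqref{math:pullbackestimate} produces the factor $C_{E}^{k+n/p}$ and the Lipschitz nature of $\rho$ guarantees $\rho\bigl(B_{\delta}(A)\cap\text{exterior collar}\bigr)\subseteq B_{L_{E}\delta}(A)\cap\Omega$ for $L_{E} := \Lip(\rho)$. The main technical obstacle is the bookkeeping in Stage 1: promoting the purely local, ball-by-ball identity \eqref{math:ana:bc} into a single global weak-derivative identity on $\Omega^{\rm b}$ requires a careful partition of unity that interpolates between balls intersecting $\Gamma_T$ and balls lying inside $\Omega$. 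Once this is in place, the remaining pieces are routine consequences of the pullback calculus for bi-Lipschitz maps.
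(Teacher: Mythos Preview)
Your two-stage scheme---zero extension to $\Omega^{\rm b}$ followed by reflection across $\partial\Omega^{\rm b}$ via the collar $\Psi^{\rm b}$---is exactly the paper's construction; the paper simply packages the reflection step by citing Lemmas~7.1, 7.2, and 7.4 of \cite{licht2016smoothed} rather than writing it out. Your Stage~1 partition-of-unity argument is in fact more explicit than the paper's, which just says the zero extension lies in $W^{p,q}\Lambda^{k}(\Omega^{\rm b})$ ``by definition''.

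One point deserves more care than you give it. The weak-derivative identity across $\partial\Omega^{\rm b}$ in Stage~2 is not handled by a partition of unity alone: a partition of unity only reduces you to a test form $v$ supported in the collar $\calC\Omega^{\rm b}$, and you still need to show the identity for such $v$. The key observation (this is the content of Lemma~7.4 in \cite{licht2016smoothed}) is that because $\rho$ is the identity on $\partial\Omega^{\rm b}$ while its differential reverses the normal direction, the difference $v - \rho^{\ast} v$ has vanishing \emph{tangential} trace on $\partial\Omega^{\rm b}$. A change of variables reduces $\int_{\calC\Omega^{\rm b}} (E^{k}u)\wedge\cartan v$ to $\int_{\Omega^{\rm b}} (\tilde E^{k}u)\wedge\cartan(v-\rho^{\ast}v)$, and the vanishing tangential trace then lets you approximate $v-\rho^{\ast}v$ by forms compactly supported in $\Omega^{\rm b}$ via a cutoff in the collar parameter, after which the weak-derivative identity for $\tilde E^{k}u$ applies. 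Your phrase ``automatic because $\rho$ restricts to the identity there'' captures the right ingredient but undersells the argument needed. The local bound is fine once you note that for $x$ in the exterior collar with $\dist(x,A)\leq\delta$ and $A\subseteq\overline\Omega\subseteq\overline{\Omega^{\rm b}}$, the collar-parameter of $x$ is at most $\Lip((\Psi^{\rm b})^{\inv})\,\delta$, whence $\|x-\rho(x)\|$ is controlled by a constant times $\delta$.
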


\begin{proof}
 For any $u \in M\Lambda^{k}(\Omega)$ we let $E_{0}^{k} u \in M\Lambda^{k}(\Omega^{\rm b})$
 denote the extension of $u$ by zero to $\Omega^{\rm b}$. 
 Moreover, we let $E_{r}^{k} : M\Lambda^{k}(\Omega^{\rm b}) \rightarrow M\Lambda^{k}(\Omega^{\rm e})$
 be the extension operator based on reflection
 that is obtained by applying the results 
 of Subsection~7.1 in \cite{licht2016smoothed} to our domain $\Omega^{\rm b}$
 and the tubular neighborhood described by $\Psi^{\rm b}$. 

 We set $E^{k} := E_{r}^{k} E_{0}^{k}$.
 We first observe that $E^{k} u$ vanishes over $\Upsilon$ for every $u \in M\Lambda^{k}(\Omega)$, 
 since by construction $E_{0}^{k}$ vanishes over $\Upsilon$ 
 and $E_{r}^{k} E_{0}^{k} u$ agrees with $E_{0}^{k} u$ over $\Omega^{\rm b}$.
 Next, by Lemma~7.1 of \cite{licht2016smoothed} 
 we have for every $p \in [1,\infty]$ a bounded mapping 
 $E^{k}_{r} : L^{p}\Lambda^{k}(\Omega^{\rm b}) \rightarrow L^{p}\Lambda^{k}(\Omega^{\rm e})$. 
 In combination, for every $p \in [1,\infty]$
 we have a bounded mapping $E^{k} : L^{p}\Lambda^{k}(\Omega) \rightarrow L^{p}\Lambda^{k}(\Omega^{\rm e})$. 
 
 Assume that $p,q \in [1,\infty]$ and $u \in W^{p,q}\Lambda^{k}(\Omega,\Gamma_T)$. 
 Then $E_{0}^{k} u \in W^{p,q}(\Omega^{\rm b})$ with $E_{0}^{k} \cartan u = \cartan E_{0}^{k} u$ by definition. 
 Moreover, 
 we have $E_{r}^{k} E_{0}^{k} u \in W^{p,q}(\Omega^{\rm e})$ 
 with $E_{r}^{k} E_{0}^{k} \cartan u = E_{r}^{k} \cartan E_{0}^{k} u = \cartan E_{r}^{k} E_{0}^{k} u$
 by Lemma~7.4 of \cite{licht2016smoothed}. 
 
 Finally, \eqref{prop:extensionoperator:localbound} follows with a combination of Lemma~7.2 of \cite{licht2016smoothed}
 applied to the extension operator $E_{r}$
 together with the fact that $E_{0}$ is an extension by zero. 
\end{proof}

\begin{remark} 
 A similar extension operator can be found in \cite{schoberl2008posteriori},
 whose technical details are elaborated in \cite{gopalakrishnan2011partial}
 for the special case of (strongly) Lipschitz domains 
 and boundary partitions with a piecewise $C^{1}$-interfaces.
 We consider a more general geometric setting in this contribution
 and use weaker assumptions. 
\end{remark}

\begin{figure}[t]
 ${}$
 \\
 \begin{center}
  \begin{tikzpicture}[scale=1.0]
    
    \definecolor{hellgrau}{rgb} {0.90,0.90,0.90} 
  
    \coordinate (LOB) at ( -2.5,  1.5, 0.0);
    \coordinate (LLB) at ( -3.5,  0.0, 0.0);
    \coordinate (LUB) at ( -2.5, -1.5, 0.0);
    \coordinate (MOB) at (  0.0,  0.5, 0.0);
    \coordinate (MUB) at (  0.0, -0.5, 0.0);
    \coordinate (ROB) at (  2.9,  1.3, 0.0);
    \coordinate (RRB) at (  4.0,  0.0, 0.0);
    \coordinate (RUB) at (  2.9, -1.3, 0.0);
    
    \coordinate (LOA) at ( -2.5,  2.3, 0.0);
    \coordinate (LLA) at ( -4.0,  0.0, 0.0);
    \coordinate (LUA) at ( -2.9, -1.9, 0.0);
    \coordinate (MOA) at (  0.0,  1.5, 0.0);
    \coordinate (MUA) at (  0.0, -1.5, 0.0);
    \coordinate (ROA) at (  3.0,  2.3, 0.0);
    \coordinate (RRA) at (  4.5,  0.0, 0.0);
    \coordinate (RUA) at (  3.3, -1.7, 0.0);
    
    \coordinate (LOH) at ( -2.5,  1.9, 0.0);
    \coordinate (MOH) at (  0.0,  1.0, 0.0);
    \coordinate (ROH) at (  3.0,  1.8, 0.0);
    
    \coordinate (LOI) at ( -2.0,  0.7, 0.0);
    \coordinate (LLI) at ( -3.0,  0.0, 0.0);
    \coordinate (LUI) at ( -2.0, -0.7, 0.0);
    \coordinate (MOI) at (  0.0,  0.2, 0.0);
    \coordinate (MUI) at (  0.0, -0.2, 0.0);
    \coordinate (ROI) at (  2.4,  0.7, 0.0);
    \coordinate (RRI) at (  3.5,  0.0, 0.0);
    \coordinate (RUI) at (  2.4, -0.7, 0.0);
    
    \draw [ultra thick,black,fill=hellgrau]
          (MOB)
          to[out=0   ,in=180,dashed]
          (ROB)
          to[out=0   ,in=90]
          (RRB)
          to[out=-90 ,in=0] 
          (RUB)
          to[out=-180,in=0] 
          (MUB)
          to[out=-180,in=0] 
          (LUB)
          to[out=-180,in=-90] 
          (LLB)
          to[out=  90,in=180] 
          (LOB)
          to[out=   0,in=180] 
          (MOB)
          ;
    
    \draw [black]
          (MOA)
          to[out=0   ,in=180]
          (ROA)
          to[out=0   ,in=90]
          (RRA)
          to[out=-90 ,in=0] 
          (RUA)
          to[out=-180,in=0] 
          (MUA)
          to[out=-180,in=0] 
          (LUA)
          to[out=-180,in=-90] 
          (LLA)
          to[out=  90,in=180] 
          (LOA)
          to[out=   0,in=180] 
          (MOA)
          ;
    
    \draw [black]
          (MOI)
          to[out=0   ,in=180]
          (ROI)
          to[out=0   ,in=90]
          (RRI)
          to[out=-90 ,in=0] 
          (RUI)
          to[out=-180,in=0] 
          (MUI)
          to[out=-180,in=0] 
          (LUI)
          to[out=-180,in=-90] 
          (LLI)
          to[out=  90,in=180] 
          (LOI)
          to[out=   0,in=180] 
          (MOI)
          ;
    
    \draw [ultra thick,black,dashed,fill=hellgrau,pattern=crosshatch, pattern color=hellgrau]
          (MOB)
          to[out=   0,in=180]
          (ROB)
          to[out=  90,in=-90]
          (ROH)
          to[out= 180,in=0] 
          (MOH)
          to[out= 180,in=0] 
          (LOH)
          to[out=- 90,in=90] 
          (LOB)
          to[out=-  0,in=180] 
          (MOB)
          ;
    
    \draw [ultra thick,black]
          (LOB)
          to[out=   0,in=180]
          (MOB)
          to[out=   0,in=180]
          (ROB)
          ;
    
    \draw [line width = 1.9pt,hellgrau,dashed] 
          (LOB)
          to[out=   0,in=180]
          (MOB)
          to[out=   0,in=180]
          (ROB)
          ;

  \end{tikzpicture}
\end{center}
  
 \caption{
 Domain $\Omega$ (gray) with a bulge $\Upsilon$ (shaded) attached.
 The thick line is the boundary part $\Gamma_N$ of the original domain,
 and the contact line between $\Omega$ and $\Upsilon$ is the boundary part $\Gamma_T$. 
 The thinner lines inside and outside of the domain 
 indicate the inner and outer boundaries, respectively, of a tubular neighborhood $\Psi^{0}$. 
 }
 \label{fig:bulgeddomain}
\end{figure}
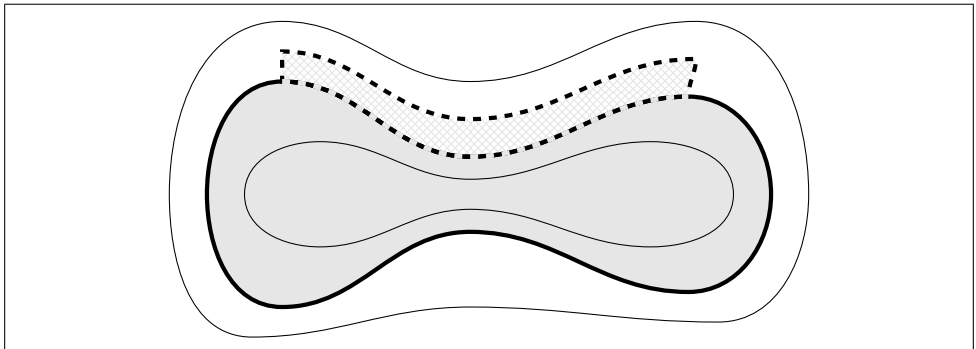

%
%

\section{Distortion of Domain Boundaries}
\label{sec:distortion}

In this section we discuss a geometric result 
that enters the construction of the smoothed projection 
but which is also of independent interest.
The basic idea is as follows: 
given a domain $\Upsilon \subseteq \bbR^{n}$,
we search for a homeomorphism of $\bbR^{n}$
that moves $\partial\Upsilon$ into $\Upsilon$
and that is the identity outside of a neighborhood of $\partial\Upsilon$.
Moreover, we want to locally control how far the homeomorphism
moves the boundary into the domain.
Specifically, we prove the following result.

\begin{theorem}
 \label{prop:distortionmapping}
 Let $\Upsilon \subseteq \bbR^{n}$ be a bounded weakly Lipschitz domain.
 There exist $\eps_{D} > 0$ and $L_{D} > 0$ 
 such that for any non-negative function $\varrho : \bbR^{n} \rightarrow \bbR$
 satisfying 
 \begin{align}
  \label{prop:distortionmapping:condition}
  \Lip(\varrho,\bbR^{n})
  < 
  \eps_{D},
  \quad 
  \max\limits_{x \in \bbR^{n}} \varrho(x) 
  < 
  \eps_{D}
  ,
 \end{align}
 there exists a bi-Lipschitz mapping $\frakD_{\varrho} : \bbR^{n} \rightarrow \bbR^{n}$
 with the following properties.
 \begin{subequations}
 \label{prop:distortionmapping:properties} 
 We have 
 \begin{gather}
  \label{prop:distortionmapping:estimates} 
  \Lip(\frakD_{\varrho}) \leq L_{D} \left( 1 + \Lip(\varrho) \right),
  \quad 
  \Lip(\frakD^{\inv}_{\varrho}) \leq L_{D} \left( 1 + \Lip(\varrho) \right).
 \end{gather}
  We have 
  \begin{align}
   \label{prop:distortionmapping:preservebulge}
   \frakD_{\varrho}(\Upsilon)
   \subseteq
   \Upsilon
   .
  \end{align}
  For all $x \in \bbR^{n}$ we have 
  \begin{align}
   \label{prop:distortionmapping:shiftbound}
   \left\| x - \frakD_{\varrho}(x) \right\|
   \leq
   L_{D}
   \varrho(x)
   . 
  \end{align}
  For all $x \in \bbR^{n}$ we have 
  \begin{align}
   \label{prop:distortionmapping:identityaway}
   \dist\left( x, \partial\Upsilon \right)
   \geq 
   L_{D}
   \varrho(x)
   \;\;\implies\;\;
   \frakD_{\varrho}(x) = x
   .
  \end{align}
  For all $x \in \partial\Upsilon$ we have 
  \begin{gather}
   \label{prop:distortionmapping:intobulge}
   \frakD_{\varrho}\left( B_{ \varrho(x) / L_{D} }(x) \right)
   \subseteq 
   \Upsilon
   .
  \end{gather}
  \end{subequations}
\end{theorem}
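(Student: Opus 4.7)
The plan is to construct $\frakD_{\varrho}$ as the identity outside a tubular neighborhood of $\partial\Upsilon$ and as a piecewise affine inward push in collar coordinates, calibrated so that the shift at $y=\Psi(x,0)$ equals a fixed multiple of $\varrho(x)$ and decays to $0$ outside a band of width proportional to $\varrho(x)$.

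\textbf{Step 1 (collar).} Applying Theorem~2.3 of \cite{licht2016smoothed} to $\Upsilon$, pick a LIP embedding $\Psi:\partial\Upsilon\times[-1,1]\to\bbR^{n}$ with $\Psi(\cdot,0)=\Id_{\partial\Upsilon}$, $\Psi(\partial\Upsilon\times[-1,0))\subseteq\Upsilon$, $\Psi(\partial\Upsilon\times(0,1])\subseteq\overline\Upsilon^c$. Let $T:=\Psi(\partial\Upsilon\times(-1,1))$ and set $\Lambda:=\max(\Lip(\Psi),\Lip(\Psi^{\inv}))$.

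\textbf{Step 2 (definition).} Fix a constant $\tilde L\geq 1$ to be chosen and set $\alpha(x):=\tilde L\varrho(x)$ for $x\in\partial\Upsilon$. For $y\in T$ with unique representation $y=\Psi(x,t)$ define the profile
\begin{align*}
 g(x,t):=\begin{cases}t,&|t|\geq 2\alpha(x),\\ \tfrac{1}{2}t-\alpha(x),&-2\alpha(x)\leq t\leq 0,\\ \tfrac{3}{2}t-\alpha(x),&0\leq t\leq 2\alpha(x),\end{cases}
\end{align*}
and set $\frakD_{\varrho}(y):=\Psi(x,g(x,t))$; for $y\notin T$ set $\frakD_{\varrho}(y):=y$. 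Taking $\eps_{D}<(2\tilde L)^{-1}$ ensures $|g(x,t)|<1$ so the formula stays inside the image of $\Psi$, and for $|t|\geq 2\alpha(x)$ we have $g=t$, which matches the identity on $T^{c}$ at the seam.

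\textbf{Step 3 (bi-Lipschitz).} For each fixed $x$, $g(x,\cdot)$ is a monotone piecewise affine bijection with slopes in $\{\tfrac12,1,\tfrac32\}$, hence bi-Lipschitz with uniform constants. Joint regularity follows from $|\alpha(x)-\alpha(x')|\leq\tilde L\Lip(\varrho)\|x-x'\|$: writing $g(x,t)=t-\max\bigl(0,\min(\tfrac{3}{2}(2\alpha(x)-t),\tfrac{1}{2}(t+2\alpha(x)))\bigr)$ (the inward shift is the min of the two affine functions defining the two ramps, thresholded at $0$) exhibits $g$ as the composition of Lipschitz operations in $(x,t)$, with Lipschitz constant $\compleq 1+\Lip(\varrho)$. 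Transferring back through $\Psi,\Psi^{\inv}$ yields the bound~\eqref{prop:distortionmapping:estimates} with $L_{D}\compeq\Lambda^{2}\tilde L$; the inverse admits an entirely analogous piecewise affine formula on the image, so the same estimate applies to $\frakD_{\varrho}^{\inv}$.

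\textbf{Step 4 (geometric properties).} Inclusion \eqref{prop:distortionmapping:preservebulge}: on $\Upsilon\cap T$ we have $t\leq 0$, and from the formula $g(x,t)\leq t\leq 0$, so the image stays in $\Upsilon$; outside $T$ the map is the identity. Shift bound~\eqref{prop:distortionmapping:shiftbound}: $\|y-\frakD_{\varrho}(y)\|\leq\Lambda|t-g(x,t)|\leq\Lambda\alpha(x)$, and $\varrho(x)\leq\varrho(y)+\Lip(\varrho)\Lambda|t|\leq\varrho(y)+2\Lambda\tilde L\Lip(\varrho)\varrho(x)$ gives $\varrho(x)\leq 2\varrho(y)$ once $\eps_{D}$ is small, so the shift is at most $2\Lambda\tilde L\varrho(y)$. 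Identity condition~\eqref{prop:distortionmapping:identityaway}: from $\dist(y,\partial\Upsilon)\leq\Lambda|t|$ we get $|t|\geq L_{D}\varrho(y)/\Lambda$, combined with $\varrho(x)\leq\varrho(y)+\Lip(\varrho)\Lambda|t|$ this forces $|t|\geq 2\alpha(x)$ as soon as $L_{D}\geq 4\tilde L\Lambda$ and $\eps_{D}$ is small. Into-bulge property~\eqref{prop:distortionmapping:intobulge}: if $y=\Psi(x',t')$ with $\|y-x\|\leq\varrho(x)/L_{D}$ then $|t'|,\|x-x'\|\leq\Lambda\varrho(x)/L_{D}$ and $\alpha(x')\geq\tilde L\varrho(x)/2$, so $g(x',t')\leq\max(\tfrac{3}{2}t'-\alpha(x'),t'/2-\alpha(x'))\leq\tfrac{3}{2}\Lambda\varrho(x)/L_{D}-\tilde L\varrho(x)/2<0$ provided $L_{D}>3\Lambda/\tilde L$.

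\textbf{Step 5 (calibration).} Pick $\tilde L:=1$, then $L_{D}:=C\Lambda^{2}$ and $\eps_{D}$ small enough (in terms of $\tilde L,\Lambda$) so that all of the above smallness conditions are simultaneously met. This finishes the construction.

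The main obstacle is Step~3: the breakpoints of $g(x,\cdot)$ move with $x$, so the joint bi-Lipschitz estimate—and in particular the quantitative control of the inverse's Lipschitz constant in terms of $\Lip(\varrho)$—requires writing the shift as an explicit Lipschitz function of $(x,t)$ and verifying that the derived inequalities can be closed by choosing $\eps_{D}$ small; the remaining geometric steps are then routine bookkeeping with constants.
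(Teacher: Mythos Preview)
Your approach is essentially identical to the paper's: both construct $\frakD_\varrho$ by a piecewise-linear reparametrization of the collar variable $t$ calibrated by $\varrho$ at the basepoint, the paper using slopes $\{1/3,1,2\}$ and $\alpha=\varrho(x_0)/8$ in place of your $\{1/2,1,3/2\}$ and $\alpha=\tilde L\varrho(x)$. One minor slip: your min/max expression for the shift in Step~3 is off (e.g.\ at $t=\alpha$ it returns $3\alpha/2$ instead of $\alpha/2$); the correct formula is $t-g(x,t)=\max\bigl(0,\min((t+2\alpha)/2,(2\alpha-t)/2)\bigr)$, which is still a composition of Lipschitz operations in $(x,t)$, so your conclusion stands.
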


\begin{remark}
 We discuss the meaning and application of Theorem~\ref{prop:distortionmapping} before we give the proof. 
 The mapping $\frakD_{\varrho}$ is a distortion of $\bbR^{n}$ which moves $\Upsilon$ into itself. 
 The function $\varrho$ controls the amount of distortion near $\Upsilon$. 
 The distortion $\frakD_{\varrho}$ contracts a neighborhood of $\Upsilon$ into the domain.
 Specifically, we interpret the properties \eqref{prop:distortionmapping:properties} in the following manner. 
 Property~\eqref{prop:distortionmapping:preservebulge} formalizes that the distortion 
 moves $\partial\Upsilon$ into $\Upsilon$; in particular, $\Upsilon$ is mapped into itself.
 Property~\eqref{prop:distortionmapping:identityaway} formalizes that the homeomorphism is the identity outside of a neighborhood of $\partial\Upsilon$.
 By Property~\eqref{prop:distortionmapping:shiftbound} the amount of distortion is locally bounded by $\varrho$,
 and Property~\eqref{prop:distortionmapping:intobulge} formalizes that the distortion is proportional to $\varrho$ near the boundary.
\end{remark}

\begin{proof}[Proof of Theorem~\ref{prop:distortionmapping}]
 Since $\Upsilon \subseteq \bbR^{n}$ is a bounded weakly Lipschitz domain,
 we can apply Theorem~2.3 of \cite{licht2016smoothed} to deduce
 the existence of a LIP embedding 
 \begin{gather*}
  \Xi : \partial\Upsilon \times [-1,1] \rightarrow \bbR^{n}
 \end{gather*}
 such that $\Xi(x,0) = x$ for $x \in \partial\Upsilon$
 and such that 
  $\Xi\left( \partial\Upsilon, [0,1] \right) 
  \subset 
  \overline\Upsilon$.
 In particular, there exist constants $c_{\Xi}, C_{\Xi} > 0$ such that
 \begin{gather*}
  \left\| \Xi(x_1,t_1) - \Xi(x_2,t_2) \right\|
  \leq 
  C_{\Xi}
  \sqrt{ \| x_1 - x_2 \|^{2} + | t_1 - t_2 |^{2} },
  \\
  \sqrt{ \| x_1 - x_2 \|^{2} + | t_1 - t_2 |^{2} }
  \leq 
  c_{\Xi} \left\| \Xi(x_1,t_1) - \Xi(x_2,t_2) \right\|
 \end{gather*}
 for $x_1, x_2 \in \partial\Upsilon$ and $t_1, t_2 \in [-1,1]$.
 We note in particular that 
 \begin{align*}
  c_{\Xi}^{\inv}
  \Lip( \varrho ) 
  \leq 
  \Lip\left( \varrho \Xi \right)
  \leq
  C_{\Xi} \Lip(\varrho)
  .
 \end{align*}
 For $\alpha \in [0,\niceonefifth]$ we consider the parametrized mappings 
 \begin{gather*}
  \zeta_{\alpha} : 
  [-1,1] \rightarrow [-1,1],
  \quad 
  t 
  \mapsto 
  \int_{-1}^{t}
  1 + \chi_{[-2\alpha,\alpha]} - \frac{2}{3}\chi_{[\alpha,3\alpha]} \;\dif{\lambda} - 1,
  \\
  \zeta_{\alpha}^{\inv} : 
  [-1,1] \rightarrow [-1,1],
  \quad 
  t 
  \mapsto
  \int_{-1}^{s}
  1 + \frac{2}{3} \chi_{[-2\alpha,\alpha]} - 2 \chi_{[3\alpha,4\alpha]} \;\dif{\lambda} - 1
  ,
 \end{gather*}
 where $\chi_{I}$ is the indicator function of the interval $I \subseteq [-1,1]$. 
 As the notation already suggests, these two mappings are mutually inverse for $\alpha$ fixed.
 We easily see that they are strictly monotonically increasing,
 and that their Lipschitz constants are uniformly bounded for $\alpha \in [0,\niceonefifth]$.
 In particular $\zeta_{\alpha}$ and $\zeta_{\alpha}^{\inv}$ are bi-Lipschitz.
 Moreover, we observe for, say, $\alpha \in [0,\niceonefifth]$ that 
 \begin{gather}
  \label{math:zetaproperty:eins}
  \zeta_{\alpha}(t) = \zeta_{\alpha}^{\inv}(t) = t,
  \quad 
  t \notin [ -2\alpha, 4\alpha ],
  \\
  \label{math:zetaproperty:zwei}
  \zeta_{\alpha}\left( [-\alpha,\alpha] \right) 
  =
  [ \alpha, 3\alpha ]
  .
 \end{gather}
 We now write $\zeta(t;\alpha) = \zeta_{\alpha}(t)$ and $\zeta^{\inv}(t;\alpha) = \zeta_{\alpha}^{\inv}(t)$
 for $(t,\alpha) \in [-1,1] \times [0,\niceonefifth]$.
 Assume from now on that 
 \begin{gather*}
  \max\limits_{x \in \bbR^{n}} \varrho(x) < \niceonefifth,
  \quad 
  \Lip( \varrho, \bbR^{n} ) < \min\left\{ 1, \Lip( \Xi )^{\inv} \right\}.
 \end{gather*}
 This implies that $\Lip( \varrho \Xi ) < 1$. 
 We define homeomorphisms 
 \begin{gather*}
  \frakD_{\varrho} : \bbR^{n} \rightarrow \bbR^{n},
  \quad 
  \frakD_{\varrho}^{\inv} : \bbR^{n} \rightarrow \bbR^{n},
 \end{gather*}
 in the following manner.
 Assume that $x \in \bbR^{n}$.
 If there exist $x_0 \in \partial\Upsilon$ and $t \in [-1,1]$
 such that $x = \Xi(x_0,t)$, then we set  
 \begin{gather*}
  \frakD_{\varrho}( x )
  :=
  \Xi\left( x_0, t' \right),
  \quad 
  t' := \zeta\left( t; \varrho(x_0) / 8 \right)
  ,
  \\
  \frakD_{\varrho}^{\inv}( x )
  :=
  \Xi\left( x_0, t'' \right),
  \quad 
  t'' := \zeta^{\inv}\left( t; \varrho(x_0) / 8 \right)
  .
 \end{gather*}
 Otherwise, we set $\frakD_{\varrho}(x) := x$.
 It follows from the construction that $\frakD_{\varrho}$ and $\frakD_{\varrho}^{\inv}$
 are bi-Lipschitz and mutually inverse. 
 In particular, \eqref{prop:distortionmapping:estimates} is implied by  
 \begin{gather*}
  \Lip\left(\frakD_{\varrho}\right) 
  \leq
  1 + c_{\xi}^{\inv} C_{\Xi} \left( 1 + \Lip\left(\zeta\right) \Lip(\varrho) \right),
  \\ 
  \Lip\left(\frakD_{\varrho}^{\inv}\right)
  \leq
  1 + c_{\xi}^{\inv} C_{\Xi} \left( 1 + \Lip\left(\zeta^{\inv}\right) \Lip(\varrho) \right).
 \end{gather*}
 The construction shows that \eqref{prop:distortionmapping:preservebulge} holds,
 since $\frakD_{\varrho}$ maps $\Xi\left( \partial\Upsilon, [0,1] \right)$ into itself.
 Moreover, $\frakD_{\varrho}$ and $\frakD_{\varrho}^{\inv}$ act like the identity
 outside of $\Xi\left( \partial\Upsilon, [-1,1] \right)$.
 
 Let us assume for the remainder of this proof that 
 that $x = \Xi(x_0,t)$ for $x_0 \in \partial\Upsilon$ and $t \in [-1,1]$.
 Using \eqref{math:zetaproperty:eins}, we see that 
 $\frakD_{\varrho}(x) \neq x$
 implies 
 $|t| \leq \varrho(x_0) / 2$, 
 and so
 \begin{align*}
  \left| \varrho(x_0) - \varrho(x) \right|
  =
  \left| \varrho( \Xi(x_0,0) ) - \varrho( \Xi(x_0,t) ) \right|
  \leq
  \frac{ \Lip(\varrho\Xi) \varrho(x_0) }{ 2 } 
  \leq 
  \frac{ \varrho(x_0) }{ 2 }.
 \end{align*}
 This implies that $\varrho(x_0) \leq 2\varrho(x)$. 
 By the definition of $\zeta$ and $\frakD_{\varrho}$ we then see  
 \begin{align*}
  \left| x - \frakD_{\varrho}(x) \right|
  \leq 
  \Lip(\Xi)
  \left| t - \zeta\left(t; \dfrac{\varrho(x_0)}{ 8 } \right) \right| 
  \leq 
  \frac{6}{8}
  \Lip(\Xi) \varrho(x_0) 
  \leq 
  \frac{3}{2}
  \Lip(\Xi) \varrho(x) 
  , 
 \end{align*}
 proving \eqref{prop:distortionmapping:shiftbound}.
 Furthermore, using  we note that $x \neq \frakD_{\varrho}(x)$ implies 
 \begin{align*}
  \dist\left( x, \partial\Upsilon \right)
  \leq 
  \| x - x_{0} \|
  \leq 
  \Lip(\Xi) |t|
  \leq 
  \dfrac{\Lip(\Xi)}{2} \varrho(x_0)
  \leq 
  \Lip(\Xi) \varrho(x)
  . 
 \end{align*}
 Now \eqref{prop:distortionmapping:identityaway} follows because clearly 
 $x = \frakD_{\varrho}(x)$ is implied by  
 \begin{gather*}
  \dist(x,\Gamma_T) \geq \Lip(\Xi) \varrho(x).
 \end{gather*}
 It remains to prove \eqref{prop:distortionmapping:intobulge}.
 We define $A \subseteq \partial\Upsilon \times [-1,1]$ by 
 \begin{gather*}
  A 
  :=
  \left( B_{ \varrho(x_0) / 8 }(x_0) \cap \partial\Upsilon \right)
  \times 
  \left( -7 \varrho(x_0) / 64, 7 \varrho(x_0) / 64 \right)
  .
 \end{gather*}
 If $y \in \partial\Upsilon$ 
 with $\| x_0 - y \| \leq \varrho(x_0) / 8$,
 then $\| \varrho(x_0) - \varrho(y) \| \leq \varrho(x_0)/8$
 since we assume $\Lip(\varrho) < 1$.
 In particular $\varrho(y) \geq 7\varrho(x_0)/8$ follows.
 Via \eqref{math:zetaproperty:eins} we thus find $\frakD_{\varrho}( \Xi(A) ) \subseteq \Upsilon$.
 We observe that $A$ contains a ball around $x_0$ of radius $7\varrho(x_0) / 64$ in $\partial\Upsilon \times [-1,1]$.
 Whence $\Xi(A)$ contains a ball around $x_0$ of radius $c_{\Xi}^{\inv} 7 \varrho(x_0) / 64$.
 This shows \eqref{prop:distortionmapping:intobulge}.
 The proof is complete. 
\end{proof}

%
%

\section{Mollification with Partial Boundary Conditions}
\label{sec:mollification}

In this section we construct a mollification operator 
for differential forms on the weakly Lipschitz domain $\Omega$
that respects partial boundary conditions along the admissible boundary patch $\Gamma_T$.
As in the classical mollification operator 
we let define the mollified differential form at each point 
by averaging the coefficients in a small neighborhood of that point. 
But as a technical difference to the classical construction 
we let the mollification radius vary over the space. 
\\

The \emph{standard mollifier} is the function 
\begin{align}
 \label{math:standardmollifier}
 \mu : \bbR^{n} \rightarrow \bbR,
 \quad
 y
 \mapsto \left\{ \begin{array}{lcl}
  C_{\mu} \exp\left( - ( 1 - \|y\|^{2} )^{\inv} \right)
  & \text{ if } & \|y\| \leq 1, 
  \\
  0
  & \text{ if } & \|y\| > 1,
 \end{array}\right.
\end{align}
where $C_{\mu} > 0$ is chosen such that $\mu$ has unit integral.
Note that $\mu$ is smooth and has compact support in $B_1(0)$.
We also write $\mu_{r}(y) := r^{-n}\mu(y/r)$. 

The canonical smoothing operator over the Euclidean space is defined by convolution with the standard mollifier. 
We consider a generalization where the smoothing radius is locally controlled by a function.
Throughout this section we let $\varrho : \bbR^{n} \rightarrow \bbR$
be a non-negative smooth function that assumes a positive minimum over $\overline\Omega$.
We introduce the mapping 
\begin{align}
 \label{math:mollification_transform}
 \Phi_{\varrho} :
 \bbR^{n} \times B_1(0)
 \rightarrow
 \bbR^n,
 \quad
 (x,y)
 \mapsto 
 x + \varrho(x) y
 .
\end{align}
Regarding the second variable as a parameter, we have a family of mappings 
\begin{align*}
 \Phi_{\varrho,y} : 
 \bbR^{n} \rightarrow \bbR^{n},
 \quad 
 x \mapsto \Phi_{\varrho}( x, y ).
\end{align*}
It is easily seen that $\Phi_{\varrho,y}$ is Lipschitz 
with Lipschitz constant at most $1 + \Lip\left( \varrho \right)$. 
Moreover, if $\Lip\left( \varrho \right) < 1/2$,
then $\Phi_{\varrho,y}$ is bi-Lipschitz 
with an inverse whose Lipschitz constant is bounded by $2$.

For every $u \in L^p\Lambda^{k}(\Omega^{\rm e})$, $p \in [1,\infty]$, we then define 
\begin{align}
 \label{math:mollification_operator}
 R^{k}_{\varrho} u_{|x}
 &:=
 \int_{\bbR^n} \mu( y ) (\Phi_{\varrho,y}^{\ast} u)_{|x} \dif y,
 \quad
 x \in \overline\Omega
 .
\end{align}
The mapping $R^{k}_{\varrho}$ has range in $C^{\infty}\Lambda^{k}(\overline\Omega)$,
commutes with the exterior derivative, and satisfies local bounds in the supremum norm.

\begin{lemma}[Lemma~7.4 in \cite{licht2016smoothed}]
 \label{prop:mollifier}
 Assume that $\Phi_{\varrho,y} : \overline\Omega \rightarrow \Omega^{\rm e}$ is a LIP embedding
 for all $y \in B_1(0)$. 
 We have a well-defined linear operator 
 \begin{align*}
  R^{k}_{\varrho}
  :
  L^{p}\Lambda^{k}(\Omega^{\rm e}) \rightarrow C^{\infty}\Lambda^{k}(\overline\Omega),
  \quad 
  p \in [1,\infty]
  .
 \end{align*}
 For every $p \in [1,\infty]$,
 measurable $A \subseteq \overline\Omega$,
 and $u \in L^{p}\Lambda^{k}(\Omega^{\rm e})$
 we have 
 \begin{align}
  \label{math:mollifier_estimate_local}
  \| R^{k}_{\varrho} u \|_{C\Lambda^{k}(A)}
  &\leq
  \left( 1 + \Lip(\varrho) \right)^{k}
  \inf_{A}(\varrho)^{-\frac{n}{p}} 
  \| u \|_{L^p\Lambda^{k}\left( \Phi_{\varrho}(A,B_1) \right) }.
 \end{align}
 For every $u \in W^{p,q}\Lambda^{k}(\Omega^{\rm e})$ with $p,q \in [1,\infty]$
 we have $\cartan R^{k}_{\varrho} u = R^{k+1}_{\varrho} \cartan u$. 
 \qed
\end{lemma}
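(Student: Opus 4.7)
The plan is to exploit the change of variable $z=\Phi_{\varrho,y}(x)=x+\varrho(x)y$ in the definition \eqref{math:mollification_operator}, which rewrites the mollifier as an integral against a smooth kernel whose bandwidth varies in space, and then to transfer the classical arguments for convolution-type mollifiers. For each fixed $x\in\overline\Omega$ with $\varrho(x)>0$, the map $y\mapsto x+\varrho(x)y$ is a diffeomorphism from $B_1(0)$ onto $B_{\varrho(x)}(x)$ with Jacobian determinant $\varrho(x)^n$; substituting and using the definition \eqref{math:pullback:definition} of the pullback, one rewrites
\begin{align*}
 (R^{k}_{\varrho}u)_{|x}(\nu_1,\dots,\nu_k)
 =
 \int_{\bbR^{n}}
 \varrho(x)^{-n}\mu\!\left(\tfrac{z-x}{\varrho(x)}\right)
 u_{|z}\bigl(A(x,z)\nu_1,\dots,A(x,z)\nu_k\bigr)
 \,\dif z,
\end{align*}
where $A(x,z):=I+\tfrac{z-x}{\varrho(x)}\nabla\varrho(x)^{T}$ is the Jacobian of $\Phi_{\varrho,(z-x)/\varrho(x)}$ evaluated at $x$. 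Because $\varrho$ is smooth and attains a positive infimum on $\overline\Omega$, the kernel and $A(x,z)$ are smooth functions of $x$ with bounded derivatives on any compact subset. Differentiating under the integral, which is justified since the kernel has compact support in $z$ for each fixed $x$ and $u$ is integrable, yields $R^{k}_{\varrho}u\in C^{\infty}\Lambda^{k}(\overline\Omega)$ and also shows that $R^{k}_{\varrho}$ is well-defined on $L^{p}\Lambda^{k}(\Omega^{\rm e})$ for each $p\in[1,\infty]$.

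For the local bound \eqref{math:mollifier_estimate_local}, I would argue pointwise on $A$. From the original definition and the pullback estimate \eqref{math:pullbackestimate} applied pointwise (the $L^{\infty}$ factor on $D\Phi_{\varrho,y}$ is at most $1+\Lip(\varrho)$), we get
\begin{align*}
 |(R^{k}_{\varrho}u)_{|x}|
 \leq
 (1+\Lip(\varrho))^{k}
 \int_{\bbR^{n}}\mu(y)\,|u_{|\Phi_{\varrho,y}(x)}|\,\dif y.
\end{align*}
Apply the change of variable $z=\Phi_{\varrho,y}(x)$ with $\dif y=\varrho(x)^{-n}\dif z$ and then Hölder's inequality with exponents $p,p'$ against the kernel $\varrho(x)^{-n}\mu((z-x)/\varrho(x))$, whose $L^{p'}$ norm computes to a constant multiple of $\varrho(x)^{-n/p}$. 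Since the support of this kernel lies in $B_{\varrho(x)}(x)\subseteq\Phi_{\varrho}(A,B_1)$ whenever $x\in A$, taking the supremum over $x\in A$ and replacing $\varrho(x)^{-n/p}$ by $\inf_{A}(\varrho)^{-n/p}$ produces \eqref{math:mollifier_estimate_local} (absorbing $\|\mu\|_{L^{p'}}$ into the constant convention of the standard mollifier).

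The commutation $\cartan R^{k}_{\varrho}=R^{k+1}_{\varrho}\cartan$ on $W^{p,q}\Lambda^{k}(\Omega^{\rm e})$ then follows from the fact that the pullback along each bi-Lipschitz $\Phi_{\varrho,y}$ commutes with $\cartan$ in the weak sense (recalled after \eqref{math:pullback:definition}), so that
\begin{align*}
 \cartan\int_{\bbR^{n}}\mu(y)\,\Phi_{\varrho,y}^{\ast}u\,\dif y
 =
 \int_{\bbR^{n}}\mu(y)\,\cartan\Phi_{\varrho,y}^{\ast}u\,\dif y
 =
 \int_{\bbR^{n}}\mu(y)\,\Phi_{\varrho,y}^{\ast}\cartan u\,\dif y.
\end{align*}
The exchange of $\cartan$ (a strong derivative, since $R^{k}_{\varrho}u$ is already known to be smooth) with the integral is justified by dominated convergence using the local bound above applied to difference quotients.

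The principal obstacle is making the smoothness step rigorous: the pullback $\Phi_{\varrho,y}^{\ast}u$ lives on $\overline\Omega$, whereas differentiating at $x$ requires one to see $x$ as the integration base point of a convolution-like object. The change of variables $z=x+\varrho(x)y$ resolves this by converting the $y$-integral into a $z$-integral with a kernel that is smooth in $x$; once this reformulation is in place, the rest of the argument is essentially the classical mollifier calculus, adapted to accommodate the variable radius $\varrho(x)$ and the $(x,z)$-dependent Jacobian $A(x,z)$.
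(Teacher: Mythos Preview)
The paper does not give its own proof of this lemma; it is quoted from \cite{licht2016smoothed} and closed with a bare \qed. Your argument is the standard one and is essentially correct: the substitution $z=x+\varrho(x)y$ turns the $y$-integral into an integral against a kernel $\varrho(x)^{-n}\mu((z-x)/\varrho(x))$ that is smooth and compactly supported in $z$ and smooth in $x$ (since $\varrho$ is smooth with positive infimum on $\overline\Omega$), so differentiation under the integral yields $C^{\infty}$ regularity; the pointwise pullback bound $|\Jacobian\Phi_{\varrho,y}|\leq 1+\Lip(\varrho)$ followed by H\"older gives the local estimate; and commutation with $\cartan$ follows from $\cartan\Phi_{\varrho,y}^{\ast}=\Phi_{\varrho,y}^{\ast}\cartan$ for each fixed $y$ together with an exchange of $\cartan$ and the $y$-integral, which is justified either by Fubini against a test form or, as you do, by dominated convergence on difference quotients once smoothness is established.

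One minor caveat: your H\"older step produces an additional factor $\|\mu\|_{L^{p'}(\bbR^{n})}$, which is not automatically $\leq 1$ in every dimension, so the inequality \eqref{math:mollifier_estimate_local} as stated may hide a constant depending only on $n$ (or rely on a normalization of $\mu$ with $\|\mu\|_{L^\infty}\leq 1$). This has no effect on the subsequent arguments in the paper, where all such constants are absorbed into $C^{M}_{n,k,p}$ in Theorem~\ref{prop:regularizer:main}.
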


We are now in the position to combine the extension operator of Section~\ref{sec:geometry},
the distortion operator of Section~\ref{sec:distortion},
and regularization operator of Lemma~\ref{prop:mollifier}. 
Letting $\delta > 0$ be a small parameter to be determined below,
we define 
\begin{gather}
 \label{math:regularizer:definition}
 M^{k}_{\varrho} : L^{p}\Lambda^{k}(\Omega) \rightarrow C^{\infty}\Lambda^{k}(\Omega),
 \quad 
 u \mapsto R^{k}_{\delta\varrho} \frakD_{\varrho}^{\ast} E^{k} u,
 \quad 
 p \in [1,\infty].
\end{gather}
The properties of the mapping $M^{k}_{\varrho}$ are summarized as follows.

\begin{theorem}
 \label{prop:regularizer:main}
 Assume that $\varrho : \bbR^{n} \rightarrow \bbR$ satisfies the conditions of Lemma~\ref{prop:mollifier}
 and Theorem~\ref{prop:distortionmapping} applied to $\Upsilon$.
 Assume also that $\delta \in (0,1)$ with $2 \delta L_{D} < 1$. 
 Then $M^{k}_{\varrho}$ is well-defined. 
 Moreover, there exist $C^{M}_{n,k,p} > 0$ and $L_{M} > 0$,
 not depending on $\varrho$, 
 such that for all measurable $A \subseteq \overline\Omega$ and $u \in L^{p}\Lambda^{k}(\Omega)$ we have 
 \begin{align}
  \label{prop:regularizer:main:localbound}
  \| M^{k}_{\varrho} u \|_{C\Lambda^{k}(A)}
  \leq
  C^{M}_{n,k,p} 
  \dfrac{ 
   ( 1 + \Lip(\varrho) )^{k+\frac{n}{p}} 
  }{
   \inf_{A}(\varrho)^{\frac{n}{p}} 
  }
  \| u \|_{L^{p}\Lambda^{k}( B_{L_{M}(1+\Lip(\varrho)) \sup_{A}(\varrho) }(A) \cap \Omega )}
  . 
 \end{align}
 Additionally, if $p,q \in [1,\infty]$ and $u \in W^{p,q}\Lambda^{k}(\Omega,\Gamma_T)$
 then
 \begin{gather}
  \label{prop:regularizer:main:commutativity}
  M^{k+1}_{\varrho} \cartan u
  = 
  \cartan M^{k}_{\varrho} u,
 \end{gather}
 and $M^{k}_{\varrho} u$ vanishes in a neighborhood of $\Gamma_T$.
\end{theorem}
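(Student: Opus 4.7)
The plan is to verify the four assertions one at a time, using $M^{k}_{\varrho} = R^{k}_{\delta\varrho} \frakD_{\varrho}^{\ast} E^{k}$ as a composition of three operators whose individual properties have already been established.

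For \emph{well-definedness} I would trace the composition through the three spaces. The extension operator $E^{k}$ from Theorem~\ref{prop:extensionoperator} produces a differential form in $L^{p}\Lambda^{k}(\Omega^{\rm e})$. Since $\frakD_{\varrho} : \bbR^{n} \to \bbR^{n}$ is bi-Lipschitz by Theorem~\ref{prop:distortionmapping} with Lipschitz constants uniformly controlled by $L_{D}(1+\Lip(\varrho))$, the pullback $\frakD_{\varrho}^{\ast}$ maps $L^{p}\Lambda^{k}(\Omega^{\rm e})$ into $L^{p}\Lambda^{k}(\frakD_{\varrho}^{\inv}(\Omega^{\rm e}))$, and since $\frakD_{\varrho}$ moves $\Omega$ by at most $L_{D}\sup\varrho$ (which is small), this domain still contains $\overline\Omega$. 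The smallness assumption $2\delta L_{D} < 1$ ensures that $\Phi_{\delta\varrho,y}$ is a LIP embedding of $\overline\Omega$ into $\Omega^{\rm e}$ for all $y \in B_{1}(0)$, so Lemma~\ref{prop:mollifier} applies to $R^{k}_{\delta\varrho}$ and produces an element of $C^{\infty}\Lambda^{k}(\overline\Omega)$.

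The \emph{commutativity} assertion \eqref{prop:regularizer:main:commutativity} follows by composing three commutativity statements: $\cartan E^{k} u = E^{k+1} \cartan u$ on $W^{p,q}\Lambda^{k}(\Omega,\Gamma_{T})$ from Theorem~\ref{prop:extensionoperator}, the automatic commutativity $\cartan \frakD_{\varrho}^{\ast} = \frakD_{\varrho}^{\ast} \cartan$ for bi-Lipschitz pullbacks, and $\cartan R^{k}_{\delta\varrho} = R^{k+1}_{\delta\varrho} \cartan$ from Lemma~\ref{prop:mollifier}. The \emph{vanishing near $\Gamma_{T}$} is the most delicate point. First, $E^{k} u \equiv 0$ on $\Upsilon$. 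Property~\eqref{prop:distortionmapping:intobulge} shows that for each $x_{0} \in \Gamma_{T} \subseteq \partial\Upsilon$, the distortion maps $B_{\varrho(x_{0})/L_{D}}(x_{0})$ into $\Upsilon$, so $\frakD_{\varrho}^{\ast} E^{k} u$ vanishes on this ball. Then $R^{k}_{\delta\varrho}(\frakD_{\varrho}^{\ast} E^{k} u)_{|z}$ depends only on values in $B_{\delta\varrho(z)}(z)$; by Lipschitz continuity of $\varrho$ and the constraint $2\delta L_{D} < 1$, for $z$ sufficiently close to $x_{0}$ the whole mollification ball is contained in $B_{\varrho(x_{0})/L_{D}}(x_{0})$, so the smoothed form still vanishes in a neighborhood of $\Gamma_{T}$.

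For the \emph{local bound} \eqref{prop:regularizer:main:localbound} I would chain the three local estimates, carefully tracking how a set $A$ grows under the successive operators. Starting from Lemma~\ref{prop:mollifier} with parameter $\delta\varrho$, the constant $\inf_{A}(\delta\varrho)^{-n/p}$ contributes a factor $\inf_{A}(\varrho)^{-n/p}$ absorbed into $C^{M}_{n,k,p}$, and the support of $\mu$ blows $A$ up to $B_{\delta\sup_{A}\varrho}(A)$. Then the pullback estimate \eqref{math:pullbackestimate} with $\Lip(\frakD_{\varrho}), \Lip(\frakD_{\varrho}^{\inv}) \lesssim 1+\Lip(\varrho)$ produces the factor $(1+\Lip(\varrho))^{k+n/p}$ and further blows the set up by $L_{D}\sup\varrho$ via \eqref{prop:distortionmapping:shiftbound}. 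Finally, \eqref{prop:extensionoperator:localbound} enlarges the set by a factor $L_{E}$ and multiplies by $1+C_{E}^{k+n/p}$. Collecting these factors into $C^{M}_{n,k,p}$ and choosing $L_{M}$ to dominate $\delta + L_{D} + \delta L_{D} L_{E}$ yields the claimed bound.

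The main obstacle is the vanishing assertion: one must match the smoothing radius $\delta\varrho$ against the size $\varrho/L_{D}$ of the region into which the distortion pushes the bulge, uniformly under the Lipschitz variation of $\varrho$. Once the compatibility of $\delta$ with $L_{D}$ is arranged, the remaining work is a mechanical composition of previously established estimates.
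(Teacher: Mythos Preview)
Your proposal is correct and follows essentially the same route as the paper: well-definedness and commutativity by composing the three ingredients, the local bound by chaining Lemma~\ref{prop:mollifier}, the pullback estimate~\eqref{math:pullbackestimate} for $\frakD_{\varrho}$, and the extension bound~\eqref{prop:extensionoperator:localbound}, and the vanishing near $\Gamma_T$ by combining $E^{k}u_{|\Upsilon}=0$ with~\eqref{prop:distortionmapping:intobulge} and the constraint $2\delta L_{D}<1$. The only cosmetic difference is that the paper phrases the vanishing argument through the norm inequality~\eqref{math:anwendung:regularizer} rather than directly through support considerations, and tracks the radius inflation slightly more explicitly (arriving at $L_{M}=L_{D}L_{E}$ rather than your rougher $\delta+L_{D}+\delta L_{D}L_{E}$), but the underlying mechanism is identical.
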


\begin{theconstants*}
 We may assume that $L_{M} \leq L_{D} L_{E}$ and 
  $C^{M}_{n,k,p}
  \leq  
  \delta^{-\frac{n}{p}} L_D^{k+\frac{n}{p}} \left( 1 + C_{E}^{ k + \frac{n}{p} } \right)$.
\end{theconstants*}

\begin{proof}
 We combine Theorem~\ref{prop:extensionoperator}, 
 Theorem~\ref{prop:distortionmapping} together with \eqref{math:pullbackestimate},
 and Theorem~\ref{prop:mollifier}
 to find that $M^{k}_{\varrho}$ as given by \eqref{math:regularizer:definition} is well-defined.
 Similarly we deduce \eqref{prop:regularizer:main:commutativity}. 
 Let us now assume
 that $u \in L^{p}\Lambda^{k}(\Omega)$ 
 and 
 that $A \subseteq \Omega$ is measurable. 
 
 We prove the local estimate \eqref{prop:regularizer:main:localbound}.
 Via Theorem~\ref{prop:mollifier} we find   
 \begin{gather}
  \label{math:anwendung:regularizer}
  \| R^{k}_{\delta\varrho} \frakD_{\varrho}^{\ast} E^{k} u \|_{C\Lambda^{k}(A)}
  \leq
  \frac
  { \left( 1 + \Lip(\varrho) \right)^{k} }
  { \left( \delta \inf_{A}(\varrho) \right)^{\frac{n}{p}} } 
  \| \frakD_{\varrho}^{\ast} E^{k} u \|
  _{L^{p}\Lambda^{k}\left( \Phi_{\delta\varrho}(A,B_1) \right)}
  .
 \end{gather}
 By Theorem~\ref{prop:distortionmapping} and the pullback estimate \eqref{math:pullbackestimate} we have 
 \begin{align*}
   &\| \frakD_{\varrho}^{\ast} E^{k} u \|
   _{L^{p}\Lambda^{k}\left( \Phi_{\delta\varrho}(A,B_1) \right)}
   \leq
   L_{D}^{k + \frac{n}{p}}
   ( 1 + \Lip(\varrho) )^{k+\frac{n}{p}}
   \| E^{k} u \|_{L^{p}\Lambda^{k}\left( \frakD_{\varrho}\Phi_{\delta\varrho}(A,B_1) \right)}
   .
 \end{align*}
 For $x \in A$ and $y \in B_{\delta\varrho(x)}(x)$ we use \eqref{prop:distortionmapping:shiftbound} to find 
 \begin{align*}
  \| \frakD_{\varrho}(y) - y \| 
  \leq 
  L_{D} \varrho(y) 
  &\leq 
  L_{D}\big( \varrho(x) + | \varrho(y) - \varrho(x) | \big)
  \\&\leq 
  L_{D}\big( \varrho(x) + \Lip(\varrho) \| y - x \|  \big)
  \leq 
  L_{D} \varrho(x) \big( 1 + \delta \Lip(\varrho) \big)
  .
 \end{align*}
 Consequently,
 the definition of $\Phi$ now gives 
 \begin{align*}
  \frakD_{\varrho}\Phi_{\delta\varrho}(A,B_1)
  &\subseteq
  B_{L_D\left( 1 + \delta \Lip(\varrho) \right) \sup_{A}(\varrho) }(A) \cap \Omega^{\rm e}
  .
 \end{align*}
 The desired inequality \eqref{prop:regularizer:main:localbound} follows with Lemma~\ref{prop:extensionoperator}.
 \begin{align*}
  \| E^{k} u \|
  _{L^{p}\Lambda^{k}\left( \frakD_{\varrho}\Phi_{\delta\varrho}(A,B_1) \right)}
  \leq 
  \left( 1 + C_{E}^{ k + \frac{n}{p} } \right)
  \| u \|
  _{L^{p}\Lambda^{k}\left( B_{L_{E}L_D\left( 1 + \delta \Lip(\varrho) \right) \sup_{A}(\varrho) }(A) \cap \Omega \right)}
  . 
 \end{align*}
 We finish the proof by showing that $M^{k}_{\varrho} u$ 
 vanishes near $\Gamma_T$.
 Let $x \in \Gamma_T$ and consider $A = B_{\Lip(\varrho)^{\inv} \varrho(x)}(x) \cap \overline\Omega$. 
 For all $y \in A$ we have 
 \begin{gather*}
  \varrho(y) \leq \varrho(x) + \Lip(\varrho) \| y - x \| \leq 2 \varrho(x). 
 \end{gather*}
 Hence $B_{ \delta \sup_{A}(\varrho) }$ 
 is a subset of 
 $B_{2\delta\varrho(x)}(x)$.
 In particular, \eqref{math:anwendung:regularizer} now shows that 
 the maximum of $R^{k}_{\delta\varrho} \frakD_{\varrho}^{\ast} E^{k} u$ over $A$
 is bounded by a multiple of the $L^{p}$ norm of $\frakD_{\varrho}^{\ast} E^{k} u$
 over $B_{2\delta\varrho(x)}(x)$.
 One more application of the pullback estimate \eqref{math:pullbackestimate} gives 
 \begin{gather*}
 \| \frakD_{\varrho}^{\ast} E^{k} u \|
  _{L^{p}\Lambda^{k}\left( B_{2\delta\varrho(x)}(x) \right)}
  \leq 
  L_{D}^{k + \frac{n}{p}}
  ( 1 + \Lip(\varrho) )^{k+\frac{n}{p}}
  \| E^{k} u \|
  _{L^{p}\Lambda^{k}\left( \frakD_{\varrho}B_{2\delta\varrho(x)}(x) \right)}
  . 
 \end{gather*}
 We now assume $2\delta < 1 / L_D$.
 In combination with Theorem~\ref{prop:distortionmapping} we conclude 
 \begin{gather*}
  \frakD_{\varrho}B_{2\delta\varrho(x)}(x) \subseteq \Upsilon
  .
 \end{gather*}
 But $E^{k} u$ is zero over $\Upsilon$.
 The proof is complete. 
\end{proof}

Although the focus of this research is numerical analysis,
the results up to this point allow a contribution to functional analysis.
Specifically, we prove the density result mentioned in the introduction.

\begin{lemma}
 Let $\Omega$ be a bounded weakly Lipschitz domain  and let $\Gamma_T$ be an admissible boundary patch.
 Then the smooth differential $k$-forms in $C^{\infty}\Lambda^{k}(\overline\Omega)$
 that vanish near $\Gamma_T$
 constitute a dense subset of $W^{p,q}\Lambda^{k}(\Omega,\Gamma_T)$
 for all $p,q \in [1,\infty)$.
\end{lemma}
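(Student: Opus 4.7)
The plan is to construct explicit approximations using the mollification operator $M^{k}_{\varrho}$ of Theorem~\ref{prop:regularizer:main} with constant radii $\varrho_{j} \equiv \eps_{j} \downarrow 0$, chosen small enough that the hypotheses of Theorems~\ref{prop:distortionmapping} and~\ref{prop:regularizer:main} apply for every $j$. Because $\Lip(\varrho_{j}) = 0$ and $\inf_{\overline\Omega}\varrho_{j} = \eps_{j} > 0$, Theorem~\ref{prop:regularizer:main} immediately yields that $u_{j} := M^{k}_{\varrho_{j}} u$ belongs to $C^{\infty}\Lambda^{k}(\overline\Omega)$, that $u_{j}$ vanishes in a neighborhood of $\Gamma_{T}$, and that $\cartan u_{j} = M^{k+1}_{\varrho_{j}} \cartan u$. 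It therefore suffices to prove the following single claim: for every $r \in [1,\infty)$ and every $v \in L^{r}\Lambda^{\ell}(\Omega)$,
\begin{equation*}
 M^{\ell}_{\varrho_{j}} v \longrightarrow v \quad \text{in } L^{r}\Lambda^{\ell}(\Omega).
\end{equation*}
Applying this with $(v,r) = (u,p)$ and with $(v,r) = (\cartan u, q)$ then gives $u_{j} \to u$ in the norm of $W^{p,q}\Lambda^{k}(\Omega)$.

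To verify the claim, note first that $E^{\ell} v$ restricted to $\Omega$ equals $v$ by the construction of the extension in Theorem~\ref{prop:extensionoperator} (extension by zero followed by reflection, both of which preserve the values on $\Omega$). We decompose
\begin{equation*}
 M^{\ell}_{\varrho_{j}} v - v
 =
 R^{\ell}_{\delta\eps_{j}}\left( \frakD_{\eps_{j}}^{\ast} E^{\ell} v - E^{\ell} v \right)
 + \left( R^{\ell}_{\delta\eps_{j}} E^{\ell} v - E^{\ell} v \right) \quad \text{on } \Omega.
\end{equation*}
With $\varrho \equiv \eps_{j}$, the map $\Phi_{\delta\eps_{j},y}$ in \eqref{math:mollification_transform} reduces to pure translation, so $R^{\ell}_{\delta\eps_{j}}$ acts as componentwise classical mollification with radius $\delta\eps_{j}$. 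The second summand therefore tends to zero in $L^{r}_{\mathrm{loc}}(\Omega^{\mathrm{e}})$, and hence in $L^{r}(\Omega)$, by the standard convergence of mollifiers on Lebesgue spaces, which is valid because $r < \infty$. For the first summand, Young's inequality provides the uniform estimate $\|R^{\ell}_{\delta\eps_{j}} g\|_{L^{r}(\Omega)} \leq \|g\|_{L^{r}(B_{\delta\eps_{j}}(\Omega))}$, so it remains to prove
\begin{equation*}
 \frakD_{\eps_{j}}^{\ast} E^{\ell} v \longrightarrow E^{\ell} v \quad \text{in } L^{r}\Lambda^{\ell}(\Omega^{\mathrm{e}}).
\end{equation*}

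This last convergence is the technical heart of the argument. The pullback estimate \eqref{math:pullbackestimate}, together with the Lipschitz bounds \eqref{prop:distortionmapping:estimates}, shows that the operators $\frakD_{\eps_{j}}^{\ast}$ are bounded on $L^{r}\Lambda^{\ell}$ with constants that are independent of $j$. By property \eqref{prop:distortionmapping:identityaway}, $\frakD_{\eps_{j}}$ coincides with the identity outside an $O(\eps_{j})$-neighborhood of $\partial\Upsilon$; hence for any continuous form $w$ with compact support in $\Omega^{\mathrm{e}}$, the dominated convergence theorem delivers $\frakD_{\eps_{j}}^{\ast} w \to w$ in $L^{r}$. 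A standard density-plus-uniform-boundedness argument then upgrades this convergence from continuous compactly supported forms to every $L^{r}$ form, and in particular to $E^{\ell} v$.

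The principal obstacle is precisely this last step. Because $\frakD_{\eps_{j}}$ is only bi-Lipschitz, the Jacobian minors entering the pointwise definition \eqref{math:pullback:definition} of the pullback are defined only almost everywhere, and the passage from pointwise a.e.\ convergence to $L^{r}$ convergence must carefully combine the uniform $L^{r}$ operator bound with the fact that $\frakD_{\eps_{j}}$ is literally the identity outside a set that shrinks to the Lebesgue-null set $\partial\Upsilon$. All remaining ingredients are routine consequences of the extension, distortion, and mollification machinery established in Sections~\ref{sec:geometry}--\ref{sec:mollification}.
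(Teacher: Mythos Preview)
Your argument is correct and the overall scheme---approximate $u$ by $M^{k}_{\eps}u$ with constant $\varrho\equiv\eps$ and prove $L^{p}$ and $L^{q}$ convergence separately for $u$ and $\cartan u$---matches the paper. The difference lies in how the convergence $M^{\ell}_{\eps}v\to v$ is established. The paper splits the \emph{domain}: it sets $\Omega'_{\eps}=\Omega\cap B_{L_{D}\eps}(\partial\Omega)$ and $\Omega''_{\eps}=\Omega\setminus\Omega'_{\eps}$, observes that on $\Omega''_{\eps}$ the operator $M^{\ell}_{\eps}$ reduces literally to classical convolution with $\mu_{\delta\eps}$ (since $\frakD_{\eps}$ is the identity there and $E^{\ell}v=v$), and on the shrinking boundary layer $\Omega'_{\eps}$ it simply bounds $\|v-M^{\ell}_{\eps}v\|_{L^{r}(\Omega'_{\eps})}$ by $C\|v\|_{L^{r}(\Omega'_{C\eps})}\to 0$. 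You instead split the \emph{operator}, writing $M^{\ell}_{\eps}v-v = R^{\ell}_{\delta\eps}(\frakD_{\eps}^{\ast}-\Id)E^{\ell}v + (R^{\ell}_{\delta\eps}-\Id)E^{\ell}v$, and then prove the separate convergence $\frakD_{\eps}^{\ast}E^{\ell}v\to E^{\ell}v$ in $L^{r}(\Omega^{\rm e})$ via uniform boundedness plus density of continuous forms. The paper's route is shorter because it never needs to analyse $\frakD_{\eps}^{\ast}$ as an operator on $L^{r}$; your route is more modular and isolates the role of each ingredient.

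One remark on the ``principal obstacle'' you flag: it is milder than you suggest. Because $\frakD_{\eps}$ is \emph{exactly} the identity on the open set $\{x:\dist(x,\partial\Upsilon)>L_{D}\eps\}$, its Jacobian is the identity matrix there---not merely almost everywhere but everywhere---so for any continuous $w$ one has $\frakD_{\eps}^{\ast}w(x)=w(x)$ for every $x$ in that set. The difference $\frakD_{\eps}^{\ast}w-w$ is thus supported in a set of measure $O(\vol^{n}(B_{L_{D}\eps}(\partial\Upsilon)))\to 0$ and is bounded there by $(L_{D}^{k}+1)\|w\|_{\infty}$, which gives $L^{r}$ convergence directly without invoking any subtle a.e.\ Jacobian analysis. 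The density-plus-uniform-bound step then closes the argument exactly as you describe.
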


\begin{proof}
 Let $p,q \in [1,\infty)$ and $u \in W^{p,q}\Lambda^{k}(\Omega,\Gamma_T)$.
 For $\eps > 0$ small enough we define $M^{k}_{\eps} u$ using Theorem~\ref{prop:regularizer:main}.
 We also set ${\Omega'_\eps} := \Omega \cap B_{L_D \eps}\left( \partial\Omega \right)$
 and ${\Omega''_\eps} := \Omega \setminus {\Omega'_\eps}$.
 On the latter set, $M^{k}_{\eps} u$ agrees with the classical convolution of $u$ with $\mu_{\delta\eps}$.
 Hence 
 \begin{align*}
  \left\| u - M^{k}_{\eps} u \right\|_{L^{p}\Lambda^{k}({\Omega''_\eps})}
  &=
  \left\| u - \mu_{\delta\eps} \star u \right\|_{L^{p}\Lambda^{k}({\Omega''_\eps})}
  \leq 
  \left\| u - \mu_{\delta\eps} \star u \right\|_{L^{p}\Lambda^{k}(\Omega)}
  . 
 \end{align*}
 By basic results on mollifications,
 the last expression converges to zero as $\eps$ converges to zero.
 Going further, a combination of Young's inequality for convolutions, 
 the pullback estimate \eqref{math:pullbackestimate} applied to $\frakD_{\eps}$,
 and Theorem \eqref{prop:extensionoperator} show the existence of $C > 0$
 independent of $u$ and $\eps$ such that 
 \begin{gather*}
  \left\| u - M^{k}_{\eps} u \right\|_{L^{p}\Lambda^{k}({\Omega'_\eps})}
  \leq 
  C \left\| u \right\|_{L^{p}\Lambda^{k}(\Omega \cap {\Omega'_{C\eps}} )}
  . 
 \end{gather*}
 Using that $\Omega$ is a weakly Lipschitz domain, 
 one can show that $\vol^{n}({\Omega'_{C\eps}})$ converges to zero
 as $\eps$ converges to zero.
 This implies that the $L^{p}$ norm of $u$ over ${\Omega'_\eps} \cap \Omega$ 
 converges to zero as $\eps$ converges to zero.
 Since $M_{\eps}^{k+1} \cartan u = \cartan M_{\eps}^{k} u$,
 we conclude that $M^{k}_{\eps} u$ converges to $u$ in $W^{p,q}\Lambda^{k}(\Omega)$. 
 Lastly, we recall that $M^{k}_{\eps} u \in C^{\infty}\Lambda^{k}(\overline\Omega)$
 with support away from $\Gamma_T$ for all $\eps > 0$.
 The proof is complete.
\end{proof}

%
%

\section{Finite Element Projection with Partial Boundary Conditions}
\label{sec:projection}

In this section, we prove the main result of this paper: 
we construct uniformly bounded commuting projections from Sobolev de~Rham complexes with partial boundary conditions
onto conforming finite element de~Rham complexes. 
Throughout this section, we let $\Omega \subseteq \bbR^{n}$ be a fixed weakly Lipschitz domain 
and we let $\Gamma_T \subseteq \partial\Omega$ be an admissible boundary patch.
\\

The discussion of finite element spaces requires a brief discussion of triangulations.
We follow \cite[Section~4]{licht2016smoothed}, to which the reader is referred for details.
We fix a finite simplicial complex $\calT$ that triangulates $\overline\Omega$. 
We write $\calT^{m}$ for the set of $m$-dimensional simplices in $\calT$,
and for each $T \in \calT$ we write 
\begin{align*}
 \Delta(T) := \left\{\; S \in \calT \suchthat S \subseteq T \;\right\}
 ,
 \quad
 \calT(T) := \left\{\; S \in \calT \suchthat S \cap T \neq \emptyset \;\right\}
 .
\end{align*}
We work with finite element spaces of differential forms 
that constitute a finite element de~Rham complex in the sense 
of finite element exterior calculus \cite{AFW1,AFW2}. 
We let $\calP\Lambda^{k}(\calT)$ be one of the spaces 
$\calP_{r  }\Lambda^{k  }(\calT)$ and $\calP_{r  }^{-}\Lambda^{k  }(\calT)$, 
as defined in \cite[Section~3--4]{AFW1}, 
such that the choice of finite element spaces yields a finite element de~Rham complex 
\begin{align}
 \label{math:finiteelementcomplex}
 \begin{CD}
  0 \to
  \calP\Lambda^{0}(\calT)
  @>\cartan>>
  \calP\Lambda^{1}(\calT)
  @>\cartan>>
  \cdots
  @>\cartan>>
  \calP\Lambda^{n}(\calT)
  \to 0
 \end{CD}
\end{align}
of Arnold-Falk-Winther-type as described in \cite[Section~3--4]{AFW1}. 
This class of finite element de Rham complexes admit commuting canonical interpolants
\begin{align}
 \label{math:FEinterpolant}
 I^{k} : 
 C\Lambda^{k}(\overline\Omega)
 + W^{\infty,\infty}\Lambda^{k}(\Omega)
 \rightarrow 
 \calP\Lambda^{k}(\calT)
\end{align}
that are bounded operators on continuous and flat differential forms,
that act as the identity on the finite element spaces, 
and that commute with the exterior derivative on flat differential forms. 

The definition of finite element spaces with boundary conditions 
requires an additional geometric assumption. 
We assume from now on that there exists a subset $\calU \subset \calT$
whose member's union is exactly $\overline\Gamma_{T}$. 
In other words, $\calU$ triangulates the closure of the boundary patch. 
Setting 
\begin{align}
 \calP\Lambda^{k}(\calT,\calU)
 &:=
 \left\{\; 
  u \in \calP\Lambda^{k}(\calT)
  \suchthat 
  \forall F \in \calU : \trace_{F} u = 0
 \;\right\}
 ,
\end{align}
we observe that 
$\calP\Lambda^{k}(\calT,\calU) = \calP\Lambda^{k}(\calT) \cap W^{\infty,\infty}(\Omega,\Gamma_{T})$.
The discussion in \cite[Section~6]{licht2016smoothed} 
easily gives the well-definedness of the mapping 
\begin{gather*}
 I^{k} : 
 W^{\infty,\infty}\Lambda^{k}(\Omega,\Gamma_T) 
 \rightarrow 
 \calP\Lambda^{k}(\calT,\calU)
 .
\end{gather*}
This means that the canonical interpolant preserves homogeneous boundary conditions. 
In particular, the following diagram commutes:
\begin{align}
 \label{math:interpolationoperatorcommutingdiagram}
 \begin{CD}
  \cdots
  @>>>
  W^{\infty,\infty}\Lambda^{k  }(\Omega,\Gamma_T)
  @>{\cartan}>>
  W^{\infty,\infty}\Lambda^{k+1}(\Omega,\Gamma_T)
  @>>>
  \cdots
  \\
  @.
  @V{I^{k}}VV
  @VI^{k+1}VV
  @.
  \\
  \cdots
  @>>>
  \calP\Lambda^{k  }(\calT,\calU) 
  @>{\cartan}>> 
  \calP\Lambda^{k+1}(\calT,\calU) 
  @>>> 
  \cdots
 \end{CD}
\end{align}
We review some technical definitions and inequalities in the proofs of our main results
(see, again, \cite[Section~4]{licht2016smoothed}).
For every $m$-dimensional simplex $T \in \calT^{m}$ 
we let $h_T = \diam(T)$ denote its diameter 
and $\vol^{m}(T)$ denote its $m$-dimensional volume. 
If $V \in \calT^{0}$, then convention 
has $\vol^{0}(V) = 1$ and $h_V$ be the average length of all $n$-simplices of $\calT$
adjacent to $V$. 
The \emph{shape constant} of $\calT$ is the minimal $C_{\rm mesh} > 0$ 
such that for all $T \in \calT^{n}$ we have $h_T^n \leq C_{\rm mesh} \vol^{n}(T)$
and for all $S, T \in \calT$ with $S \cap T \neq \emptyset$
we have $h_T \leq C_{\rm mesh} h_S$.
Intuitively, this constant measures the regularity of the triangulation. 
In the sequel, we call a constant \emph{uniformly bounded}
if it can be bounded in terms of geometric properties of the domain, 
the shape constant, and the maximal polynomial degree 
of the finite element spaces. 

There exist uniformly bounded constants $C_{\rm N}, \eps_{h} > 0$
such that 
\begin{gather}
 \forall T \in \calT 
 :
 |\calT(T)| \leq C_{\rm N}
 , 
 \\
 \label{math:neighborcontainment:duo}
 \forall T \in \calT 
 :
 B_{\eps_{h} h_T}(T) \cap \overline\Omega \subseteq \calT(T).
\end{gather}
In addition, we recall Lemma~7.7 in \cite{licht2016smoothed}, 
which asserts the existence of a smooth function 
that indicates the local mesh size over the domain. 
More precisely, there exist uniformly bounded constants 
$L_h > 0$ and $C_h > 0$ and a smooth function 
$\mathtt h : \bbR^{n} \rightarrow \bbR$ 
such that 
\begin{gather}
 \label{math:meshfunction:lipschitz}
 \Lip( \mathtt h, \overline\Omega ) 
 \leq
 L_{h},
 \quad 
 \min_{T \in \calT^{n} } h_{T}
 \leq 
 \min_{ x \in \bbR^{n} } \mathtt h(x)
 ,
 \quad 
 \max_{ x \in \bbR^{n} } \mathtt h(x)
 \leq 
 \max_{T \in \calT^{n} } h_{T}
 ,
 \\
 \label{math:meshfunction:localcomparison}
 \forall F \in \calT : \forall x \in F : 
 C_{h}^{\inv} h_F \leq \mathtt h(x) \leq C_{h} h_F
 .
\end{gather}
For the discussion of scaling arguments, 
we recall the definition of the $n$-dimensional reference simplex
$\Delta^{n} = \convex\{0,e_1,\dots,e_n\} \subseteq \bbR^{n}$.
For each $T \in \calT^{n}$ we fix an affine transformation
$\varphi_T(x) = M_T x + b_T$
with $b_T \in \bbR^{n}$ and invertible $M_T \in \bbR^{n \times n}$ 
such that $\varphi_T( \Delta^{n} ) = T$.
We can estimate the $\ell^{2}$ operator norms 
\begin{align}
 \label{math:referencetransformation}
 \left\| M_T \right\|
 \leq c_{\rm M} h_T
 , 
 \quad 
 \left\| M_T^{\inv} \right\|
 \leq C_{\rm M} h_T^{\inv}
 .
\end{align}
with uniformly bounded constants $c_{\rm M}, C_{\rm M} > 0$.
\\

Having reviewed these technical definitions, 
we discuss the main results of this section.  
For $\eps > 0$ small enough, 
we define the \emph{smoothed interpolant} $Q^{k}_{\eps}$ as the operator 
\begin{align}
 Q^{k}_{\eps} 
 :
 L^{p}\Lambda^{k}(\Omega)
 \rightarrow
 \calP\Lambda^{k}(\calT,\calU)
 \subseteq 
 L^{p}\Lambda^{k}(\Omega), 
 \quad 
 u
 \mapsto
 I^{k} M_{\eps {\mathtt h}}^{k} u,
 \quad 
 p \in [1,\infty]. 
\end{align}

\begin{theorem}
 \label{prop:interpolationbound}
 Let $\eps > 0$ be small enough.
 For $p \in [1,\infty]$
 there exists a uniformly bounded constant $C_{Q,p} > 0$ such that 
 \begin{gather}
  \label{prop:interpolationbound:localestimate}
  \| Q^{k}_{\eps} u \|_{L^{p}\Lambda^{k}(T)}
  \leq
  C_{Q,p} \eps^{ -\frac{n}{p} } \| u \|_{L^{p}\Lambda^{k}(\calT(T))},
  \quad 
  u \in L^{p}\Lambda^{k}(\Omega),
  \quad T \in \calT^{n}, 
  \\
  \label{prop:interpolationbound:globalestimate}
  \| Q^{k}_{\eps} u \|_{L^{p}\Lambda^{k}(\Omega)}
  \leq
  C_{\rm N}^{1/p} C_{Q,p} \eps^{ -\frac{n}{p} } \| u \|_{L^{p}\Lambda^{k}(\Omega)},
  \quad 
  u \in L^{p}\Lambda^{k}(\Omega)
  .
 \end{gather}
 Moreover, we have 
 \begin{align}
  \label{prop:interpolationbound:commutativity}
  \cartan Q^{k}_{\eps} u = Q^{k}_{\eps} \cartan u,
  \quad 
  u \in W^{p,q}\Lambda^{k}(\Omega,\Gamma_T),
  \quad 
  p, q \in [1,\infty].
 \end{align}
\end{theorem}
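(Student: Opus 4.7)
The plan is to combine the local bound on the mollification operator from Theorem~\ref{prop:regularizer:main} with standard scaling estimates for the canonical finite element interpolant $I^{k}$, and then to derive the global bound by summing the local bounds using the uniform neighborhood bound \eqref{math:neighborcontainment:duo}. Commutativity will follow automatically by sandwiching $M^{k}_{\eps\mathtt h}$ between the two commuting diagrams: the one in Theorem~\ref{prop:regularizer:main} and diagram~\eqref{math:interpolationoperatorcommutingdiagram}.

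For the local estimate~\eqref{prop:interpolationbound:localestimate}, I would fix $T \in \calT^{n}$ and argue in three steps. First, a standard scaling argument via the reference map $\varphi_{T}$ and~\eqref{math:referencetransformation} gives a uniformly bounded constant $C_{I} > 0$ such that
\begin{gather*}
 \| I^{k} w \|_{L^{p}\Lambda^{k}(T)}
 \leq
 C_{I} h_{T}^{n/p} \| w \|_{C\Lambda^{k}(T)}
\end{gather*}
for every smooth $w$; this uses that the degrees of freedom defining $I^{k}$ on $T$ depend only on the restriction of $w$ to $T$ and that $I^{k}$ acts as a uniformly bounded operator on the reference simplex. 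Second, I apply Theorem~\ref{prop:regularizer:main} with $\varrho = \eps\mathtt h$, using $\Lip(\eps\mathtt h) \leq \eps L_{h}$ from \eqref{math:meshfunction:lipschitz} and the two-sided comparison $C_{h}^{-1} h_{T} \leq \mathtt h(x) \leq C_{h} h_{T}$ from \eqref{math:meshfunction:localcomparison} for $x \in T$, to obtain
\begin{gather*}
 \| M^{k}_{\eps\mathtt h} u \|_{C\Lambda^{k}(T)}
 \leq
 C_{n,k,p}^{M} (1 + \eps L_{h})^{k+n/p} (\eps C_{h}^{-1} h_{T})^{-n/p}
 \| u \|_{L^{p}\Lambda^{k}( B_{L_{M}(1+\eps L_{h}) \eps C_{h} h_{T}}(T) \cap \Omega )}.
\end{gather*}
Third, for $\eps > 0$ sufficiently small (so that $L_{M}(1+\eps L_{h})\eps C_{h} \leq \eps_{h}$ and so that $\eps\mathtt h$ satisfies the hypotheses of Theorems~\ref{prop:distortionmapping} and~\ref{prop:mollifier}), the enlarged ball sits inside $\calT(T)$ by \eqref{math:neighborcontainment:duo}. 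Combining the three bounds, the factors $h_{T}^{n/p}$ cancel and the claimed estimate with $C_{Q,p}$ independent of $T$ follows.

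The global estimate~\eqref{prop:interpolationbound:globalestimate} is then obtained by raising the local estimate to the $p$-th power, summing over $T \in \calT^{n}$, and invoking the bound $|\calT(T)| \leq C_{\rm N}$ to control the overlap (for $p = \infty$, one simply takes the essential supremum over $T$). For commutativity~\eqref{prop:interpolationbound:commutativity}, let $u \in W^{p,q}\Lambda^{k}(\Omega,\Gamma_{T})$. By Theorem~\ref{prop:regularizer:main}, $M^{k}_{\eps\mathtt h} u$ lies in $C^{\infty}\Lambda^{k}(\overline\Omega)$ and vanishes in a neighborhood of $\Gamma_{T}$, hence belongs to $W^{\infty,\infty}\Lambda^{k}(\Omega,\Gamma_{T})$, and $\cartan M^{k}_{\eps\mathtt h} u = M^{k+1}_{\eps\mathtt h} \cartan u$. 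The commuting diagram~\eqref{math:interpolationoperatorcommutingdiagram} then yields $\cartan I^{k} M^{k}_{\eps\mathtt h} u = I^{k+1} M^{k+1}_{\eps\mathtt h} \cartan u$, which is the desired identity.

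The main obstacle is the bookkeeping in the second step: one must carefully track how the uniformly bounded constants $L_{h}$, $C_{h}$, $L_{M}$, $\eps_{h}$, and the shape constant interact so that a single threshold $\eps_{0} > 0$, depending only on the allowed quantities, makes the argument work simultaneously on all simplices. Once this threshold is fixed, the cancellation of $h_{T}^{n/p}$ between the interpolation scaling and the factor $\inf_{T}(\eps\mathtt h)^{-n/p}$ from Theorem~\ref{prop:regularizer:main} is essentially automatic, and the remainder is routine.
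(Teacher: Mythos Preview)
Your proposal is correct and follows essentially the same route as the paper's proof: transfer to the reference simplex to get the $h_T^{n/p}$ scaling of $I^{k}$, apply Theorem~\ref{prop:regularizer:main} with $\varrho=\eps\mathtt h$ to bound $\|M^{k}_{\eps\mathtt h}u\|_{C\Lambda^{k}(T)}$, use \eqref{math:neighborcontainment:duo} to absorb the ball into $\calT(T)$ for small $\eps$, sum for the global bound, and read off commutativity from Theorem~\ref{prop:regularizer:main} together with \eqref{math:interpolationoperatorcommutingdiagram}. The only cosmetic difference is that the paper splits your first step into two (pullback to $\Delta^{n}$ yielding $h_T^{n/p-k}$, then the reference interpolant bound yielding $h_T^{k}$), but the net effect and the constants involved are the same.
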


\begin{theconstants*}
 It suffices that $\eps > 0$ is so small that $L_{\rm M}(1+\eps L_{h}) C_{h} \eps < \eps_{h}$
 and that Theorem~\ref{prop:regularizer:main} applies for $\varrho = \eps \mathtt{h}$.
 With the notation as in the following proof, we may assume 
 $C_{Q,p} \leq C_{\rm M}^{k} c_{\rm M}^{k} C_{I} C_{h} C^{M}_{n,k,p} \left( 1 + \eps L_{h} \right)^{\frac{n}{p}}$.
\end{theconstants*}

\begin{proof}
 The proof is very similar to the proof of Theorem~7.8 in \cite{licht2016smoothed}. 
 We let $u \in L^{p}\Lambda^{k}(\Omega)$ and $T \in \calT^{n}$.
 By \eqref{math:pullbackestimate}, \eqref{math:referencetransformation} and $\vol^{n}(T) \leq h_T^{n}$ we get 
 \begin{align*}
  \| Q^{k}_{\eps} u \|_{L^{p}\Lambda^{k}(T)}
  &\leq 
  C_{\rm M}^{k} h_T^{\frac{n}{p}-k}
  \| \varphi_T^{\ast} I^{k} M^{k}_{\eps {\mathtt h}} u \|_{L^{\infty}\Lambda^{k}(\Delta^{n})}
  . 
 \end{align*}
 By estimate (6.15) of \cite{licht2016smoothed} and discussions in that reference 
 we know about the existence of a uniformly bounded constant $C_{I} > 0$ such that 
 \begin{align*}
  \| \varphi_T^{\ast} I^{k} M^{k}_{\eps {\mathtt h}} u \|_{L^{\infty}\Lambda^{k}(\Delta^{n})}
  &\leq 
  C_{I} c_{\rm M}^{k} h_T^{k}
  \| M^{k}_{\eps {\mathtt h}} u \|_{C\Lambda^{k}(T)}.
 \end{align*}
 Assuming that $\eps$ is small enough, we can apply Theorem~\ref{prop:regularizer:main} to find 
 \begin{align*}
  \| M^{k}_{\eps {\mathtt h}} u \|_{C\Lambda^{k}(T)}
  \leq
  C^{M}_{n,k,p}
  \dfrac{ 
   C_{h} ( 1 + \eps L_{h} )^{k+\frac{n}{p}} 
  }{
   \left( \eps h_{T} \right)^{\frac{n}{p}}
  }
  \| u \|_{L^{p}\Lambda^{k}\left( B_{L_{\rm M}(1+\eps L_{h}) C_{h} \eps h_T }(T) \cap \Omega \right)}
  . 
 \end{align*}
 If $L_{\rm M}(1+\eps L_{h}) C_{h} \eps < \eps_{h}$, then $B_{ L_{\rm M}(1+\eps L_{h}) C_{h} \eps h_T }(T) \subseteq \calT(T)$.
 Thus the local bound \eqref{prop:interpolationbound:localestimate} is proven,
 and the global bound \eqref{prop:interpolationbound:globalestimate} follows easily with 
 \begin{gather*}
  \sum_{ T \in \calT^{n} }
  \| \omega \|_{L^{p}\Lambda^{k}( T )}^{p}
  \leq
  \sum_{ T \in \calT^{n} }
  \| \omega \|_{L^{p}\Lambda^{k}( \calT(T) )}^{p}
  \leq C_{\rm N}
  \sum_{ T \in \calT^{n} }
  \| \omega \|_{L^{p}\Lambda^{k}( T )}^{p}
  .
 \end{gather*}
 Moreover, 
 $M^{k}_{\eps \mathtt h} E^{k} u$ vanishes near every $F \in \calU$. 
 Finally, \eqref{prop:interpolationbound:commutativity}
 follows from Theorem~\ref{prop:extensionoperator},
 Theorem~\ref{prop:mollifier}, and our assumptions on $I^{k}$. 
 The proof is complete. 
\end{proof}


We have proven uniform local bounds for the smoothed interpolant $Q_{\eps}^{k}$.
Even though $Q_{\eps}^{k}$ is generally not a projection onto $\calP\Lambda^{k}(\calT,\calU)$,
the interpolation error over $\calP\Lambda^{k}(\calT,\calU)$
can be made arbitrarily small when $\eps > 0$ is small enough.

\begin{theorem}
 \label{prop:interpolation_error}
 For $\eps > 0$ satisfying a uniform bound, 
 there exists uniformly bounded $C_{e,p} > 0$ for every $p \in [1,\infty]$ 
 such that 
 \begin{align*}
  \| u - Q_{\eps}^{k} u \|_{L^{p}\Lambda^{k}(T)}
  \leq 
  \eps C_{e,p} 
  \| u \|_{L^{p}\Lambda^{k}(\calT(T))},
  \quad 
  u \in \calP\Lambda^{k}_{}(\calT),
  \quad 
  T \in \calT^{n}
  .
 \end{align*}
\end{theorem}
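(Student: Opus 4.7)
The plan exploits that $I^{k}$ reproduces finite element forms. Since $u \in \calP\Lambda^{k}(\calT) \subseteq W^{\infty,\infty}\Lambda^{k}(\Omega)$, we have $I^{k} u = u$, so
\[
 u - Q^{k}_{\eps} u
 = I^{k} u - I^{k} M^{k}_{\eps\mathtt h} u
 = I^{k}\bigl(u - M^{k}_{\eps\mathtt h} u\bigr).
\]
Applying the local $L^{p}$-bound on $I^{k}$ exactly as in the proof of Theorem~\ref{prop:interpolationbound} (which estimates $\|I^{k} v\|_{L^{p}\Lambda^{k}(T)}$ by a uniform multiple of $h_{T}^{n/p}\|v\|_{C\Lambda^{k}(T)}$), the claim reduces to showing
\[
 \| M^{k}_{\eps\mathtt h} u - u \|_{C\Lambda^{k}(T)}
 \leq
 \eps\, C\, h_{T}^{-n/p}\, \| u \|_{L^{p}\Lambda^{k}(\calT(T))}
\]
with $C$ uniform in $T$, $\eps$, and $u$.

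A key observation is that $M^{k}_{0} u = u$ on $\Omega$. Indeed, at $\eps = 0$ we have $\varrho = 0$, so Theorem~\ref{prop:distortionmapping} gives $\frakD_{0} = \mathrm{id}$, and from the definition of $\Phi$ in Section~\ref{sec:mollification} we also have $\Phi_{0,y} = \mathrm{id}$; hence $M^{k}_{0} u = E^{k} u$, which coincides with $u$ on $\Omega$. Therefore $M^{k}_{\eps\mathtt h} u - u$ vanishes at $\eps = 0$, and the task reduces to showing that this deviation is Lipschitz in $\eps$ when $u$ is a polynomial on each simplex.

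I would then proceed by a scaling argument. For each $T \in \calT^{n}$ I pull back via $\varphi_{T}$ so that the patch $\calT(T)$ becomes a reference patch $\hat\calT(\hat T)$ of diameter $O(1)$; using the shape-regularity bounds \eqref{math:referencetransformation} and the uniform bound $|\calT(T)| \leq C_{\rm N}$, the possible reference patches vary within a compact family depending only on the shape constant, and the rescaled polynomial space on $\hat\calT(\hat T)$ has uniformly bounded dimension. The rescaled operator family $\hat M^{k}_{\eps}$ equals the identity at $\eps = 0$ and depends continuously on $\eps$, so a first-order expansion yields the linear-in-$\eps$ bound uniformly, which then scales back to $T$ and, together with the reduction above, gives the theorem.

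The main obstacle I anticipate is quantifying the first-order $\eps$-dependence uniformly. This requires controlling how the pullback $(\frakD_{\eps\mathtt h}\circ\Phi_{\delta\eps\mathtt h,y})^{\ast}$ perturbs a polynomial differential form. The displacement $\|\frakD_{\eps\mathtt h}\Phi_{\delta\eps\mathtt h,y}(x) - x\|$ is bounded by $O(\eps h_{T})$ via \eqref{prop:distortionmapping:shiftbound} and the definition of $\Phi$; combined with an inverse inequality for the gradient of a piecewise polynomial $k$-form on $\calT(T)$, this produces an $O(\eps)$ pointwise bound. The Jacobian perturbation is treated analogously via \eqref{math:pullbackestimate} together with $\Lip(\frakD_{\eps\mathtt h}) \leq L_{D}(1+\eps L_{h})$, and the uniform bound $|\calT(T)| \leq C_{\rm N}$ keeps all constants uniform.
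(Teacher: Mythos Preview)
Your reduction to a pointwise bound $\|u - M^{k}_{\eps\mathtt h} u\|_{C\Lambda^{k}(T)} \leq \eps C h_{T}^{-n/p}\|u\|_{L^{p}\Lambda^{k}(\calT(T))}$ is too strong and in fact fails. There are two independent obstructions. First, for $k>0$ the coefficients of a finite element form $u\in\calP\Lambda^{k}(\calT)$ are only piecewise polynomial and are generally discontinuous across interior faces; since $M^{k}_{\eps\mathtt h}u$ is smooth, the difference $u-M^{k}_{\eps\mathtt h}u$ picks up the full jump at such a face, so its sup norm on $T$ is of order $\|u\|_{L^{\infty}}$, not $\eps\|u\|_{L^{\infty}}$. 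Your proposed ``inverse inequality for the gradient'' only controls each polynomial piece separately and says nothing once the displacement $\Psi_y(x)-x$ crosses a face. Second, near $\Gamma_T$ the Jacobian of the distortion is \emph{not} an $O(\eps)$ perturbation of the identity: in the construction of $\frakD_{\varrho}$ one has $\zeta_\alpha'(t)=1+\chi_{[-2\alpha,\alpha]}-\tfrac{2}{3}\chi_{[\alpha,3\alpha]}$, so $|\Jacobian\frakD_{\eps\mathtt h}-\Id|$ is of order one on a strip of width $O(\eps h_T)$. The bound $\Lip(\frakD_{\eps\mathtt h})\leq L_D(1+\eps L_h)$ that you invoke controls $|\Jacobian\frakD_{\eps\mathtt h}|$, not $|\Jacobian\frakD_{\eps\mathtt h}-\Id|$. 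Consequently the pullback perturbation of a $k$-form is $O(1)$ pointwise there, and your sup-norm estimate cannot hold.

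The paper avoids exactly this trap by never asking for a pointwise bound. It expresses $I^{k}$ through its degrees of freedom $S\in\calP\calC_{k}^{F}$ and rewrites $\int_S(E^{k}u - R^{k}_{\delta\eps\mathtt h}\frakD_{\eps\mathtt h}^{\ast}E^{k}u)$ as an integral of $\varphi_T^{\ast}E^{k}u$ over the difference of chains $\varphi_{T\ast}^{-1}S-\varphi_{T\ast}^{-1}\frakD_{\eps\mathtt h\ast}\Phi_{\delta\eps\mathtt h,y\ast}S$. The crucial Lemma~5.4 of \cite{licht2016smoothed} then bounds this by the displacement (which \emph{is} $O(\eps)$) times $\|\varphi_T^{\ast}E^{k}u\|_{W^{\infty,\infty}}$; the point is that only the flat norm $\|\cdot\|_{L^\infty}+\|\cartan\cdot\|_{L^\infty}$ is needed, not a Lipschitz bound on the coefficients, and the $O(1)$ Jacobian perturbation is harmless because it lives on a set whose measure already carries the factor $\eps$. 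Your compactness/scaling idea also does not localize: after pulling back by $\varphi_T$, the operators $\frakD_{\eps\mathtt h}$ and $E^{k}$ still depend on the global geometry of $\Omega$ and $\Gamma_T$, so the rescaled operators do not sit in a compact family determined by the shape constant alone.
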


\begin{theconstants*}
 With the notation as in the following proof, 
 it suffices that $\eps > 0$ is so small that Theorem~\ref{prop:interpolationbound} applies,  
 that $L_E \calL \eps < \eps_{h}$, and that $\calL \eps < 1/3$.   
 In addition, 
 we may assume
 $C_{e,p}
  \leq 
  C^{2k+1+\frac{n}{p}}_{\rm M} c_{\rm M}^{2k+1}
  C_{I}
  \left( 1 + C_{E}^{k+1+\frac{n}{p}} \right)
  C_{\flat,p}
  {\calL} 
  \max(1,\frakL)^{k} C_{\partial}
  $.
\end{theconstants*}

\begin{proof} 
 The proof here is a technical modification of the proof of Theorem~7.9 in \cite{licht2016smoothed},
 to which the reader is referred at all times for further details.  
 Let $u \in \calP\Lambda_{}^{k}(\calT)$ and let $T \in \calT^{n}$. 
 Using \eqref{math:pullbackestimate}, \eqref{math:referencetransformation}, and definitions, 
 we verify  
 \begin{align*}
  \| u - Q_{\eps}^{k} u \|_{L^{p}\Lambda^{k}(T)}
  &\leq 
  h_T^{\frac{n}{p}}
  \| E^{k} u - Q_{\eps}^{k} u \|_{L^{\infty}\Lambda^{k}(T)}
  \\&\leq 
  C_{\rm M}^{k} h_T^{\frac{n}{p}-k}
  \| \varphi_{T}^{\ast} I^{k} ( E^{k} u - R^{k}_{\delta \eps \mathtt h} \frakD_{\eps \mathtt h}^{\ast} E^{k} u ) \|_{L^{\infty}\Lambda^{k}(\Delta^{n})}
  .
 \end{align*}
 To proceed with the proof, 
 we need to recall the definition of the canonical interpolant 
 via degrees of freedom as in Section~6 of \cite{licht2016smoothed}. 
 We let $\calP\calC_{k}(F)$ denote the space of degrees of freedom associated 
 with the subsimplex $F \in \Delta(T)$;
 these spaces of functionals are given by taking the trace of a differential $k$-form
 onto the subsimplex $F$ and then integrating against another polynomial differential
 form over $F$ (see also Remark~6.1 of \cite{licht2016smoothed}).
 
 There exists a uniformly bounded constant $C_{I} > 0$,  
 as in Inequality (6.14) of \cite{licht2016smoothed}, 
 for which one can show 
 \begin{align*}
  &
  \| \varphi_{T}^{\ast} I^{k} ( E^{k} u - R^{k}_{\delta \eps \mathtt h} \frakD_{\eps \mathtt h}^{\ast} E^{k} u ) \|
  _{L^{\infty}\Lambda^{k}(\Delta^{n})}
  \\&\qquad 
  \leq 
  C_I
  \sup_{ \substack{ F \in \Delta(T) \\ S \in \calP\calC_{k}^{F} } }
  \left| \varphi_{T\ast}^{\inv}S \right|^{\inv}_{k}
   \int_{S} 
   E^{k} u
   -
   R^{k}_{\delta \eps \mathtt h} \frakD_{\eps \mathtt h}^{\ast} E^{k} u
   .
 \end{align*}
 Here, $|\varphi_{T\ast}^{\inv} S|_{k}$ is defined in the following manner.
 Let $S \in \calP\calC^{F}_{k}$ is given as the integral over $F \in \Delta(T)^{m}$
 against the smooth differential form $\eta_{S} \in C^{\infty}\Lambda^{m-k}(F)$.
 Let $\widehat F \in \Delta^{n}$ be the unique $m$-simplex that $\varphi_{T}$ maps onto $F$. 
 Then $|\varphi_{T\ast}^{\inv} S|_{k}$ equals the $L^{1}$ norm 
 of $\varphi_{T}^{\ast} \eta_{S}$ over $\widehat F$. 
 This is equivalent to the definition of $|\varphi_{T\ast}^{\inv} S|_{k}$
 via the mass norm of $k$-chains as used in \cite{licht2016smoothed}. 
 
 Fix $F \in \Delta(T)$ and $S \in \calP\calC^{F}_{k}$.
 We have 
 \begin{align*}
  \int_{S}
   E^{k} u
   -
   R^{k}_{\delta \eps \mathtt h} \frakD_{\eps \mathtt h}^{\ast} E^{k} u
  =
  \int_{S} 
  \int_{\bbR^{n}}
  \mu(y) 
  \left( 
   \Id 
   -
   \Phi_{\eps {\mathtt h},y}^{\ast} \frakD_{\eps \mathtt h}^{\ast} 
  \right) 
  E^{k} u
  \dif y.
 \end{align*}
 We then change the order of integration: 
 \begin{gather*}
  \int_{S} 
  \int_{\bbR^{n}}
  \mu(y) 
  \left( 
   \Id 
   -
   \Phi_{\eps {\mathtt h},y}^{\ast} \frakD_{\eps \mathtt h}^{\ast} 
  \right) 
  E^{k} u
  \dif y
  =
  \int_{\bbR^{n}}
  \mu(y) 
  \int_{S} 
  \left( 
   \Id 
   -
   \Phi_{\eps {\mathtt h},y}^{\ast} \frakD_{\eps \mathtt h}^{\ast} 
  \right) 
  E^{k} u
  \dif y
  .
 \end{gather*}
 Using Equation (5.14) of \cite{licht2016smoothed}, we see for $y \in B_{1}(0)$ that 
 \begin{align}
 \label{math:zwischenschritt}
  \int_{ S} 
  \left( 
   \Id 
   -
   \Phi_{\eps {\mathtt h},y}^{\ast} \frakD_{\eps \mathtt h}^{\ast} 
  \right) 
  E^{k} u
  =
  \int_{ \varphi_{T\ast}^{\inv} \left( \Id - \frakD_{\eps \mathtt h\ast} \Phi_{\delta \eps {\mathtt h},y \ast} \right) S } 
   \varphi_{T}^{\ast} E^{k} u
  .
 \end{align}
 In the remainder of the proof we bound the last term.
 We need an auxiliary estimate that bounds the difference 
 $\Id - \varphi_{T}^{\inv} \Phi_{\delta \eps {\mathtt h},y} \frakD_{\eps \mathtt h} \varphi_{T}$ uniformly in terms of $\eps$ and $y$
 within a small radius of $\varphi_{T}^{\inv} F$.
 First we see 
 \begin{gather*}
  \sup_{ y \in B_1(0) }
  \Lip\left( \varphi_{T}^{-1} \Phi_{\delta\eps\mathtt h,y} \frakD_{\eps \mathtt h} \varphi_{T} \right)
  \leq 
  c_{\rm M} C_{\rm M} L_{D} \left( 1 + \eps L_h \right)^{2}
  =:
  \frakL.
 \end{gather*}
 For any $y \in B_{1}(0)$ and $\hat x \in B_{1}( \varphi_{T}^{\inv} F )$ we find that 
 \begin{align*}
  &
  \vectornorm{ \hat x - \varphi_T^{\inv} \Phi_{\delta\eps\mathtt h,y} \frakD_{\eps \mathtt h} \varphi_T (\hat x) } 
  \\&\quad
  \leq 
  C_{\rm M} h_{T}^{\inv}
  \vectornorm{ \varphi_{T}(\hat x) - \Phi_{\delta\eps\mathtt h,y} \frakD_{\eps \mathtt h} \varphi_T (\hat x) } 
  \\&\quad
  \leq 
  C_{\rm M} h_{T}^{\inv}
  \vectornorm{ \varphi_{T}(\hat x) - \frakD_{\eps \mathtt h} \varphi_T (\hat x) } 
  +
  C_{\rm M} h_{T}^{\inv}
  \vectornorm{ \frakD_{\eps \mathtt h} \varphi_T (\hat x) - \Phi_{\delta\eps\mathtt h,y} \frakD_{\eps \mathtt h} \varphi_T (\hat x) } 
  . 
 \end{align*}
 We then estimate 
 \begin{gather*}
  \vectornorm{ \varphi_{T}(\hat x) - \frakD_{\eps \mathtt h} \varphi_T (\hat x) } 
  \leq
  L_{D} \eps \mathtt h\left( \varphi_{T}(\hat x) \right)
 \end{gather*}
 and 
 \begin{align*}
  \vectornorm{ \frakD_{\eps \mathtt h} \varphi_T (\hat x) 
  - 
  \Phi_{\delta\eps\mathtt h,y} \frakD_{\eps \mathtt h} \varphi_T (\hat x) } 
  &\leq 
  \delta \eps \mathtt h\left( \frakD_{\eps \mathtt h} \varphi_T (\hat x) \right)
  \\&\leq 
  \delta \eps \mathtt h\left( \varphi_T (\hat x) \right) + \delta \eps L_{h} \vectornorm{ \varphi_T (\hat x) - \frakD_{\eps \mathtt h} \varphi_T (\hat x) }
  \\&\leq 
  \delta \eps \left( 1 + \delta \eps L_{h} L_{D} \right) \mathtt h\left( \varphi_T (\hat x) \right)
  . 
 \end{align*}
 Let $x_{F} \in F$ such that $\| \hat x - \varphi_{T}^{\inv}( x_{F} ) \| \leq 1$. 
 Then $\| \varphi_{T}( \hat x ) - x_{F} \| \leq c_{\rm M} h_{T}$. 
 Hence 
 \begin{gather*}
  \mathtt h( \varphi_{T}(\hat x) )
  \leq 
  \mathtt h( x_{F} ) + L_{h} c_{\rm M} h_{T} 
  \leq 
  \left( C_{h} + L_{h} c_{\rm M} \right) h_{T}
  .
 \end{gather*}
 Writing ${\calL} := C_{\rm M} \left( 1 + L_{D} + L_{h} L_{D} \right) \left( C_{h} + L_{h} c_{\rm M} \right)$
 and assuming $\eps \leq 1$ for simplicity,  
 we get 
 \begin{gather*}
  \sup_{ \substack{ \hat x \in B_{1}( \varphi_{T}^{\inv} F ) } }
  \sup_{ y \in B_{1}(0) }
  \vectornorm{ \hat x - \varphi_T^{\inv} \Phi_{\delta\eps\mathtt h,y} \frakD_{\eps \mathtt h} \varphi_T (\hat x) } 
  \leq 
  \eps {\calL}
  .
 \end{gather*}
 We continue with the main part of the proof. 
 Let $\eps > 0$ be so small that ${\calL} \eps < 1 / 3$.
 We apply Lemma~5.4 in \cite{licht2016smoothed} with $r = 1 / 3$
 and the inverse inequality~(6.13) in the same reference:  
 there exists uniformly bounded $C_{\partial} > 0$
 such that for all $y \in B_{1}(0)$ we have 
 \begin{align*}
  &
  \int_{ 
   \varphi_{T\ast}^{\inv} S
   - 
   \varphi_{T\ast}^{\inv} \frakD_{\eps \mathtt h \ast} \Phi_{\eps {\mathtt h},y \ast} S
  } 
  \varphi_{T}^{\ast}
  E^{k} u 
  \\&\quad\quad
  \leq
  \eps \cdot {\calL} \max(1,\frakL)^{k} C_{\partial}
  \cdot 
  |\varphi_{T\ast}^{\inv} S|_{k}
  \cdot 
  \| \varphi_{T}^{\ast} E^{k} u \|
  _{W^{\infty,\infty}\Lambda^{k}(B_{{\calL}\eps}(\Delta_{n}))},  
 \end{align*}
 As in the proof of Theorem~7.9 of \cite{licht2016smoothed}, 
 we observe 
 \begin{align*}
  \| \varphi_{T}^{\ast} E^{k} u \|_{W^{\infty,\infty}\Lambda^{k}(B_{{\calL}\eps}(\Delta_{n}))}  
  \leq
  \left( 1 + C_{E}^{ k + 1 + \frac{n}{p} } \right)
   c_{\rm M}^{k+1} C_{\rm M}^{k+1}
  \| \varphi_{T}^{\ast} u \|_{W^{\infty,\infty}\Lambda^{k}(\varphi_{T}^{\inv}\calT(T))}  
 \end{align*}
 for $\eps$ so small that $L_{E} {\calL} c_{\rm M} \eps < \eps_{h}$. 
 Now the inverse inequality~6.12 of \cite{licht2016smoothed} gives 
 \begin{align*}
  \| \varphi_{T}^{\ast} u \|_{W^{\infty,\infty}\Lambda^{k}(\varphi_{T}^{\inv}\calT(T))}
  \leq
  C_{\flat,p}
  \| \varphi_{T}^{\ast} u \|_{L^{p}\Lambda^{k}(\varphi_{T}^{\inv}\calT(T))}
  .
 \end{align*}
 Transforming back from the reference geometry yields 
 \begin{align*}
  \| \varphi_{T}^{\ast} u \|_{L^{p}\Lambda^{k}(\varphi_{T}^{\inv}\calT(T))}
  \leq 
  c_{\rm M}^{k} C_{\rm M}^{\frac{n}{p}}
  h_T^{k-\frac{n}{p}} 
  \| u \|_{L^{p}\Lambda^{k}(\calT(T))}
  .
 \end{align*}
 The combination of these inequalities completes the proof.
\end{proof}

For $\eps > 0$ small enough, the mapping 
$Q^{k}_{\eps} : \calP\Lambda^{k}(\calT,\calU) \rightarrow \calP\Lambda^{k}(\calT,\calU)$
is so close to the identity that it is invertible. 
We can then construct the smoothed projection. 

\begin{theorem}
 \label{prop:finalprojection}
 Let $\eps > 0$ be small enough.
 There exists a bounded linear operator 
 \begin{align*}
  \pi^{k}_{\eps} : L^{p}\Lambda^{k}(\Omega) \rightarrow \calP\Lambda^{k}(\calT,\calU) \subseteq L^{p}\Lambda^{k}(\Omega) ,
  \quad 
  p \in [1,\infty],
 \end{align*}
 such that 
 \begin{align*}
  \pi^{k}_{\eps} u = u, \quad u \in \calP\Lambda^{k}(\calT,\calU),
 \end{align*}
 such that  
 \begin{align*}
  \cartan \pi^{k}_{\eps} u = \pi^{k+1}_{\eps} \cartan u,
  \quad 
  u \in W^{p,q}\Lambda^{k}(\Omega,\Gamma_T),
  \quad
  p,q \in [1,\infty],
 \end{align*}
 and such that for all $p \in [1,\infty]$ there exist uniformly bounded $C_{\pi,p} > 0$ with
 \begin{align*}
  \| \pi^{k}_{\eps} u \|_{L^{p}\Lambda^{k}(\calT)}
  \leq 
  C_{\pi,p} \eps^{ -\frac{n}{p} } \| u \|_{L^{p}\Lambda^{k}(\Omega)},
  \quad 
  u \in L^{p}\Lambda^{k}(\Omega).
 \end{align*} 
\end{theorem}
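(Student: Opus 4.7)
The plan is to apply the Sch\"oberl trick to the smoothed interpolant $Q^{k}_{\eps}$ from Theorem~\ref{prop:interpolationbound}. The key observation is that the restriction
\begin{gather*}
 J^{k}_{\eps} := Q^{k}_{\eps}|_{\calP\Lambda^{k}(\calT,\calU)} : \calP\Lambda^{k}(\calT,\calU) \to \calP\Lambda^{k}(\calT,\calU)
\end{gather*}
is well-defined because $Q^{k}_{\eps}$ already lands in the finite element space with partial boundary conditions (the image being $I^{k} M^{k}_{\eps\mathtt h}u$, where $M^{k}_{\eps\mathtt h}u$ vanishes near $\Gamma_T$ by Theorem~\ref{prop:regularizer:main} and $I^{k}$ preserves the boundary conditions by \eqref{math:interpolationoperatorcommutingdiagram}).

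First, I would show that $J^{k}_{\eps}$ is invertible on $\calP\Lambda^{k}(\calT,\calU)$ with uniformly bounded inverse in every $L^{p}$ norm. Fix $u \in \calP\Lambda^{k}(\calT,\calU) \subseteq \calP\Lambda^{k}(\calT)$. Combining Theorem~\ref{prop:interpolation_error} with the neighbor-bound $|\calT(T)| \leq C_{\rm N}$, for $p \in [1,\infty)$ we estimate
\begin{align*}
 \| u - J^{k}_{\eps} u \|_{L^{p}\Lambda^{k}(\Omega)}^{p}
 &= \sum_{T \in \calT^{n}} \| u - Q^{k}_{\eps} u \|_{L^{p}\Lambda^{k}(T)}^{p}
 \leq \eps^{p} C_{e,p}^{p} \sum_{T \in \calT^{n}} \| u \|_{L^{p}\Lambda^{k}(\calT(T))}^{p}
 \\&\leq \eps^{p} C_{e,p}^{p} C_{\rm N} \| u \|_{L^{p}\Lambda^{k}(\Omega)}^{p},
\end{align*}
with the $p = \infty$ case handled by the analogous supremum argument. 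Hence $\| \Id - J^{k}_{\eps} \|_{L^{p} \to L^{p}} \leq \eps \, C_{e,p} C_{\rm N}^{1/p}$ on the finite-dimensional space $\calP\Lambda^{k}(\calT,\calU)$. Choosing $\eps > 0$ small enough that this bound is, say, at most $1/2$, the Neumann series yields $J^{k}_{\eps}$ invertible with $\|(J^{k}_{\eps})^{\inv}\|_{L^{p} \to L^{p}} \leq 2$, uniformly in the mesh.

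Second, I would define
\begin{gather*}
 \pi^{k}_{\eps} := (J^{k}_{\eps})^{\inv} Q^{k}_{\eps} : L^{p}\Lambda^{k}(\Omega) \to \calP\Lambda^{k}(\calT,\calU)
\end{gather*}
and verify the three required properties. Idempotence follows from $\pi^{k}_{\eps} u = (J^{k}_{\eps})^{\inv} J^{k}_{\eps} u = u$ for $u \in \calP\Lambda^{k}(\calT,\calU)$. The $L^{p}$ bound follows by composing the bound $\|(J^{k}_{\eps})^{\inv}\| \leq 2$ with the global bound \eqref{prop:interpolationbound:globalestimate} on $Q^{k}_{\eps}$, which gives exactly the stated $\eps^{-n/p}$ dependence. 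For commutativity, note first that $\cartan$ maps $\calP\Lambda^{k}(\calT,\calU)$ into $\calP\Lambda^{k+1}(\calT,\calU)$, so restricting the identity $\cartan Q^{k}_{\eps} = Q^{k+1}_{\eps} \cartan$ from \eqref{prop:interpolationbound:commutativity} to the finite element space yields $\cartan J^{k}_{\eps} = J^{k+1}_{\eps} \cartan$; composing with the inverses on both sides gives $(J^{k+1}_{\eps})^{\inv} \cartan = \cartan (J^{k}_{\eps})^{\inv}$ on $\calP\Lambda^{k}(\calT,\calU)$. Then for $u \in W^{p,q}\Lambda^{k}(\Omega,\Gamma_T)$,
\begin{gather*}
 \cartan \pi^{k}_{\eps} u
 = \cartan (J^{k}_{\eps})^{\inv} Q^{k}_{\eps} u
 = (J^{k+1}_{\eps})^{\inv} \cartan Q^{k}_{\eps} u
 = (J^{k+1}_{\eps})^{\inv} Q^{k+1}_{\eps} \cartan u
 = \pi^{k+1}_{\eps} \cartan u,
\end{gather*}
as desired.

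The main obstacle is the invertibility step: everything hinges on the local error estimate of Theorem~\ref{prop:interpolation_error} being strong enough to produce an operator-norm bound on $\Id - J^{k}_{\eps}$ that is summable via the finite-overlap property, uniformly in the triangulation. The rest is essentially bookkeeping of uniformly bounded constants and the observation that $\cartan$ respects the subspace $\calP\Lambda^{k}(\calT,\calU)$.
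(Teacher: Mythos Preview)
Your proposal is correct and follows precisely the approach that the paper intends: the paper's own proof simply cites Theorem~7.11 of \cite{licht2016smoothed}, which is exactly the Sch\"oberl trick you spell out, and the paper's \emph{Constants} block ($C_{\pi,p}\leq 2C_{Q,p}C_{\rm N}^{1/p}$ under a smallness condition on $C_{e,p}\eps$) matches your Neumann-series estimate combined with \eqref{prop:interpolationbound:globalestimate}. Your verification of commutativity via $\cartan J^{k}_{\eps}=J^{k+1}_{\eps}\cartan$ on $\calP\Lambda^{k}(\calT,\calU)\subseteq W^{\infty,\infty}\Lambda^{k}(\Omega,\Gamma_T)$ is also the standard (and correct) argument.
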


\begin{theconstants*}
 It suffices that $\eps > 0$ is so small 
 that Theorem~\ref{prop:interpolationbound} and Theorem~\ref{prop:interpolation_error} apply,
 and that $C_{e,p} \eps < 2$.
 We may assume $C_{\pi,p} \leq 2 C_{Q,p} C_{\rm N}^{1/p}$.
\end{theconstants*}

\begin{proof}
 This is almost identically the proof 
 of Theorem~7.11 of \cite{licht2016smoothed}.
\end{proof}


%
%

\section{Applications}
\label{sec:application}

We conclude this article with an outline of the theoretical and numerical analysis 
of the Hodge Laplace equation with mixed boundary conditions. 
The smoothed projection is important for proving stability and convergence of a mixed finite element method 
based on a saddle point formulation of the Hodge Laplace equation.
This is similar to the theory of Hodge Laplace equation with non-mixed boundary conditions,
but the analytical background has only recently become available in the literature.
Moreover, the harmonic forms with mixed boundary conditions display some interesting qualities.

\subsection{Hodge Laplacian with Mixed Boundary Conditions}
\label{subsec:mixedhodgeplaplacian}

Throughout this section we assume that $\Omega$ is a bounded weakly Lipschitz domain
and that the tuple $(\Gamma_T,\Gamma_I,\Gamma_N)$ is an admissible boundary partition.
We write 
\begin{gather}
 H_T\Lambda^{k}(\Omega)
 :=
 W^{2,2}\Lambda^{k}(\Omega,\Gamma_T),
 \quad 
 H^\star_N\Lambda^{k}(\Omega,\Gamma_N)
 :=
 \star W^{2,2}\Lambda^{n-k}(\Omega,\Gamma_N)
 .
\end{gather}
These spaces are naturally Hilbert spaces. 
We know by Proposition 4.4 and Proposition 4.3(i)
of \cite{GMM} that the unbounded linear operators 
\begin{subequations}
\label{math:mixedbcdiff}
\begin{gather}
 \label{math:mixedbcdiff:tan}
 \cartan :
 H_T\Lambda^{k}(\Omega) \subseteq L^{2}\Lambda^{k}(\Omega)
 \rightarrow
 H_T\Lambda^{k+1}(\Omega)
 ,
 \\
 \label{math:mixedbcdiff:nor}
 \delta :
 H^\star_N\Lambda^{k}(\Omega) \subseteq L^{2}\Lambda^{k}(\Omega)
 \rightarrow
 H^\star_N\Lambda^{k-1}(\Omega)
\end{gather}
\end{subequations}
are densely-defined, closed, and mutually adjoint with closed range.
Thus 
\begin{subequations}
\label{math:derhamcomplexmixedbc}
\begin{align}
 \label{math:derhamcomplexmixedbc:tangential}
 \begin{CD}
  0 
  @>>>
  H_T\Lambda^{0}(\Omega)
  @>\cartan>>
  \dots
  @>\cartan>>
  H_T\Lambda^{n}(\Omega)
  @>>>
  0,
 \end{CD}
 \\
 \label{math:derhamcomplexmixedbc:normal}
 \begin{CD}
  0 
  @<<<
  H^\star_N\Lambda^{0}(\Omega)
  @<\delta<<
  \dots
  @<\delta<<
  H^\star_N\Lambda^{n}(\Omega)
  @<<<
  0
 \end{CD}
\end{align}
\end{subequations}
are closed Hilbert complexes in the sense of \cite{bruening1992hilbert}.
Moreover, \eqref{math:derhamcomplexmixedbc:tangential} and 
\eqref{math:derhamcomplexmixedbc:normal} are mutually adjoint.
We call \eqref{math:derhamcomplexmixedbc:tangential} the
\emph{$L^{2}$ de~Rham complex with tangential boundary conditions} along $\Gamma_T$,
and we call \eqref{math:derhamcomplexmixedbc:normal} the 
\emph{$L^{2}$ de~Rham complex with normal boundary conditions} along $\Gamma_N$.

Since the differential operators in \eqref{math:mixedbcdiff} have closed range, 
we conclude the existence of $C_{{P}} > 0$ 
such that for every $u \in H\Lambda^{k}_T(\Omega)$ 
that is orthogonal to the kernel of \eqref{math:mixedbcdiff:tan}
in the Hilbert space $L^{2}\Lambda^{k}(\Omega)$
we have 
\begin{gather}
 \label{math:poincarefriedrichs}
 \| u \|_{L^{2}\Lambda^{k}(\Omega)}
 \leq
 C_{{P}} \| \cartan u \|_{L^{2}\Lambda^{k+1}(\Omega)}.
\end{gather}
This is a Poincar\'e-Friedrichs inequality, 
and an analogous inequality can be shown for the codifferential \eqref{math:mixedbcdiff:nor}. 

The intersection $H_T\Lambda^{k}(\Omega) \cap H^{\star}_N\Lambda^{k}(\Omega)$
is a Hilbert space when equipped with the canonical intersection scalar product. 
Proposition 4.4 of \cite{GMM} now gives the compactness of the \emph{Rellich embedding}
\begin{align}
 \label{math:compactembedding}
 H_T\Lambda^{k}(\Omega) \cap H^{\star}_N\Lambda^{k}(\Omega) \rightarrow L^{2}\Lambda^{k}(\Omega)
 .
\end{align}
Under stronger assumptions on the domain and the boundary patches 
it is possible to prove the existence of $s \in (0,1]$ and $C > 0$
such that 
\begin{align}
 \label{math:gaffneyinequality}
 \| u \|_{H^{s}\Lambda^{k}(\Omega)}
 \leq
 C \| u \|_{H_T\Lambda^{k}(\Omega) \cap H^{\star}_N\Lambda^{k}(\Omega)},
 \quad 
 u \in H_T\Lambda^{k}(\Omega) \cap H^{\star}_N\Lambda^{k}(\Omega).
\end{align}
Here, $H^{s}\Lambda^{k}(\Omega)$ is the space of differential $k$-forms 
with coefficients in the Bessel potential space $H^{s}(\Omega)$. 
Inequalities of the form \eqref{math:gaffneyinequality} are known as \emph{Gaffney inequalities}
and have been proven under various assumptions on $\Omega$ and $\Gamma_T$.
For example, when $\Gamma_{T}$ is empty, 
then $s = \frac{1}{2}$ holds for every strongly Lipschitz domain 
and $s = 1$ if the domain is even convex \cite{mitrea2001dirichlet}.
We refer to Theorem 4.1 of \cite{GMM}
for the conditions of a Gaffney inequality over strongly Lipschitz domains 
with mixed boundary conditions and $s = \frac{1}{2}$.
If $H^{s}\Lambda^{k}(\Omega)$ is compactly embedded in $L^{2}\Lambda^{k}(\Omega)$, 
then any Gaffney inequality implies the compactness of the Rellich embedding. 

The space of \emph{$k$-th harmonic forms with mixed boundary conditions} is defined as 
\begin{align}
 \label{math:harmonicforms:mixedbc}
 \frakH^{k}(\Omega,\Gamma_T,\Gamma_N)
 :=
 \left\{\;
 p \in H_{T}\Lambda^{k}(\Omega) \cap H^{\star}_{N}\Lambda^{k}(\Omega)
 \suchthat
 \cartan p = 0, \delta p = 0
 \;\right\}
 .
\end{align}
Basic results on Hilbert spaces show that 
\begin{align}
 \label{math:harmonicforms:alternative}
 \frakH^{k}(\Omega,\Gamma_T,\Gamma_N)
 &=
 \left( \ker \cartan: H_T\Lambda^{k}(\Omega) \rightarrow H_T\Lambda^{k+1}(\Omega) \right)
 \cap 
 \left( \cartan H_T\Lambda^{k-1}(\Omega) \right)^{\perp}
 .
\end{align}
Hence we have the $L^{2}$ orthogonal \emph{Hodge decomposition}
\begin{align}
 \label{math:hodgedecomposition}
 L^{2}\Lambda^{k}(\Omega)
 =
 \cartan H_{T}\Lambda^{k-1}(\Omega)
 \oplus
 \frakH^{k}(\Omega,\Gamma_T,\Gamma_N)
 \oplus
 \delta H_{N}^{\star}\Lambda^{k+1}(\Omega)
 .
\end{align}
The dimension of $\frakH^{k}(\Omega,\Gamma_T,\Gamma_N)$ is of particular interest 
because it reflects topological properties of $\Omega$ and $\Gamma_T$.
Specifically, 
by Theorem 5.3 in \cite{GMM} we find that $\frakH^{k}(\Omega,\Gamma_T,\Gamma_N)$ 
is a finite-dimensional space whose dimension 
equals the topological Betti number $b_k\left(\overline\Omega,\Gamma_T\right)$ 
of $\overline\Omega$ relative to $\Gamma_T$.
One can show that 
\begin{align}
 \label{math:bettinumberidentity}
 \dim \frakH^{k}(\Omega,\Gamma_T,\Gamma_N)
 =
 b_{  k}\left(\overline\Omega,\Gamma_T\right)
 =
 b_{n-k}\left(\overline\Omega,\Gamma_N\right)
 ,
 \quad
 0 \leq k \leq n.
\end{align}
In the special cases $\Gamma_T = \emptyset$ and $\Gamma_T = \partial\Omega$,
which have received most of the attention in the literature,
the Betti numbers correspond to the topological properties of the domain only,
such as the number of connected components or of holes of a certain dimension.
But in the presence of mixed boundary conditions  
the Betti numbers depend also on the topology of the boundary patch $\Gamma_T$.

\begin{example}
 \label{example:bettinumbers}
 The spaces $\frakH^{0}(\Omega,\Gamma_T,\Gamma_N)$
 and $\frakH^{n}(\Omega,\Gamma_T,\Gamma_N)$
 are spanned by the locally constant functions over $\Omega$
 whose supports are disjoint from $\Gamma_T$ and $\Gamma_N$, respectively.
 The other harmonic spaces have more complicated descriptions,
 but their dimensions are often easier to determine.
 
 For example, if $\Omega = (-1,1)^{2}$ and $\Gamma_T$ has $M \in \bbN$ connected components, 
 then $b_{1}(\overline\Omega,\Gamma_T) = M - 1$. 
 In the specific case $\Gamma_T = \{0,1\} \times (-1,1)$
 we have $b_{1}(\overline\Omega,\Gamma_T) = 1$
 and $\frakH^{1}(\Omega,\Gamma_T,\Gamma_N)$ has a very simple description:
 it corresponds to the span of the constant vector field taking the value $(1,0)$
 over all of $\Omega$.
\end{example}

The $k$-th \emph{Hodge Laplacian} is the unbounded operator 
\begin{align*}
 \Laplace_{k} : 
 \dom(\Laplace_{k}) \subseteq L^{2}\Lambda^{k}(\Omega) \rightarrow L^{2}\Lambda^{k}(\Omega),
 \quad 
 u \mapsto \delta \cartan u + \cartan \delta u
\end{align*}
whose domain is defined as 
\begin{align*}
 \dom(\Laplace_{k})
 :=
 \left\{\;
  u \in H_{T}\Lambda^{k}(\Omega) \cap H_{N}^{\star}\Lambda^{k}(\Omega)
  \suchthat 
  \cartan u \in H_{N}^{\star}\Lambda^{k+1}(\Omega),
  \delta u \in H_{T}\Lambda^{k-1}(\Omega)
 \;\right\}
 .
\end{align*}
One can show \cite[Theorem 4.5]{GMM} that $\Laplace_{k}$ is densely-defined, closed, 
and self-adjoint with closed range, and moreover that 
\begin{align*}
 \ker \Laplace_{k} = \frakH^{k}(\Omega,\Gamma_T,\Gamma_N),
 \quad 
 \rng \Laplace_{k} = \frakH^{k}(\Omega,\Gamma_T,\Gamma_N)^{\perp}.
\end{align*}
This implies the existence of a bounded \emph{solution operator}
$G_{k} : L^{2}\Lambda^{k}(\Omega) \rightarrow L^{2}\Lambda^{k}(\Omega)$,
the pseudoinverse of $\Delta_{k}$,
which maps into $\dom(\Delta_{k})$ and satisfies
\begin{gather*}
 \rng G_{k} = \frakH^{k}(\Omega,\Gamma_T,\Gamma_N)^{\perp}
 ,
 \quad 
 \ker G_{k} = \frakH^{k}(\Omega,\Gamma_T,\Gamma_N)
 ,
 \\
 \forall f \in L^{2}\Lambda^{k}(\Omega) \cap \frakH^{k}(\Omega,\Gamma_T,\Gamma_N)^{\perp} 
 : 
 f = \Laplace_{k} G_{k} f
 ,
 \\
 \forall 
 u \in \dom(\Laplace_{k}) \cap \frakH^{k}(\Omega,\Gamma_T,\Gamma_N)^{\perp}
 :
 u = G_{k} \Laplace_{k} u
 .
\end{gather*}
The $k$-th \emph{Hodge Laplace equation} with mixed boundary conditions
for a given right-hand side $f \in L^{2}\Lambda^{k}(\Omega)$
is the partial differential equation
\begin{align}
 \label{math:hodgelaplaceequation}
 \Laplace_{k} u = f
\end{align}
in the unknown $u \in \dom\left( \Laplace_{k} \right)$.
In general, we can solve the $k$-th Hodge Laplace equation only in the sense of least squares 
whenever there exist non-trivial $k$-th harmonic forms;
the least-squares solution is precisely $u = G_{k} f$.
Any solution of \eqref{math:hodgelaplaceequation} and its derivatives 
satisfies the tangential and normal boundary conditions 
that are encoded in the definition of $\dom(\Delta_{k})$.

\subsection{Variational Theory and Finite Element Approximation}
\label{subsec:finiteelementapproximation}

The Hodge Laplacian is self-adjoint with closed range.
Can we develop a finite element method for the Hodge Laplace equation 
that minimizes the energy functional 
\begin{align*}
 \calJ( v ) 
 := 
 \onehalf \int_{\Omega} 
 \left| \cartan v \right|^{2} + \left| \delta v \right|^{2}
 \;\dif x
 - 
 \int_{\Omega} 
 \langle f, v \rangle
 \;\dif x
\end{align*}
over a subspace of $H_T\Lambda^{k}(\Omega) \cap H^{\star}_N\Lambda^{k}(\Omega)$?
While for $k=0$ this is just the canonical approach for the Poisson problem (already mentioned in the introduction),
it is problematic when $k > 0$.
To begin with, the variational theory of the energy functional involves 
a Lagrange multiplier in $\frakH^{k}(\Omega,\Gamma_T,\Gamma_N)$,
for which we only have non-conforming approximations in practice. 
A more severe difficulty is that 
piecewise polynomial differential forms generally fail to approximate members of $H_T\Lambda^{k}(\Omega) \cap H^{\star}_N\Lambda^{k}(\Omega)$
in the canonical intersection norm of that space,  
which is why a finite element method based on minimizing $\calJ$ is generally inconsistent \cite{costabel1991coercive}.

We will circumvent these difficulties by introducing $\sigma = \delta u$ as an auxiliary variable
and reformulating the Hodge Laplace problem in a saddle point formulation,
following \cite{AFW2}. 
This approach leads to the system 
\begin{subequations}
\label{math:mixedproblem}
\begin{align}
  \langle \sigma, \tau \rangle_{L^{2}} - \langle u, \cartan \tau \rangle_{L^{2}}
  &= 0,
  &{}&
  \tau \in H_T\Lambda^{k-1}(\Omega),
  \\
  \langle \cartan \sigma, v \rangle_{L^{2}} + \langle \cartan u, \cartan v \rangle_{L^{2}} + \langle p, v \rangle_{L^{2}}
  &= \langle f, v \rangle_{L^{2}},
  &{}&
  v \in H_T\Lambda^{k  }(\Omega),
  \\
  \langle u, q \rangle_{L^{2}}  
  &= 0,
  &{}&
  q \in \frakH^{k}(\Omega,\Gamma_T,\Gamma_N),  
\end{align}
\end{subequations}
where $\sigma \in H_T\Lambda^{k-1}(\Omega)$, $u \in H_T\Lambda^{k}(\Omega)$, and $p \in \frakH^{k}(\Omega,\Gamma_T,\Gamma_N)$ are the unknowns.
These are the Euler-Lagrange equations of a saddle point functional.
One can show \cite[Theorem 3.1]{AFW2} that \eqref{math:mixedproblem}
has a unique solution for every $f \in L^{2}\Lambda^{k}(\Omega)$.
Furthermore, by the discussion in \cite[Subsection 3.2.1]{AFW2}
we see that \eqref{math:mixedproblem} is equivalent to \eqref{math:hodgelaplaceequation}
in the following sense:
the tuple $(\sigma,u,p)$ solves \eqref{math:mixedproblem}
if and only if 
\begin{align*}
 u = G_{k} f, \quad p = f - G_{k} f, \quad \sigma = \delta u.
\end{align*}
Moreover, the norm of the solution $(\sigma,u,p)$ 
in $H_T\Lambda^{k-1}(\Omega) \times H_T\Lambda^{k}(\Omega) \times L^{2}(\Omega)$
is uniformly bounded in $L^{2}$ norm of $f$.
Hence the saddle point problem is well-posed.

To construct a finite element method based on \eqref{math:mixedproblem}
we consider a 
finite element de~Rham complex as a discretization 
of the $L^{2}$ de~Rham complex \eqref{math:derhamcomplexmixedbc:tangential} with tangential boundary conditions. 
Specifically, let $\calT$ be a simplicial complex that triangulates $\overline\Omega$ 
and contains a subtriangulations $\calU$ of $\Gamma_T$.
As in Section~\ref{sec:projection} 
we fix a finite element de~Rham complex with partial boundary conditions:
\begin{align}
 \label{math:finiteelementderhamcomplex}
 \begin{CD}
  0 
  @>>>
  \calP\Lambda^{0}(\calT,\calU)
  @>\cartan>>
  \dots
  @>\cartan>>
  \calP\Lambda^{n}(\calT,\calU)
  @>>>
  0
  .
 \end{CD}
\end{align}
Theorem~\ref{prop:finalprojection} gives a bounded projection 
$\pi^{k} : L^{2}\Lambda^{k}(\Omega) \rightarrow \calP\Lambda^{k}(\calT,\calU)$ 
whose operator norm depends only on 
the polynomial degree of the finite element spaces, the mesh quality, and the geometry of $\Omega$,
and for which we have the following commuting diagram:
\begin{gather*}
 \begin{CD}
  0 
  @>>>
  H_T\Lambda^{0}(\calT)
  @>\cartan>>
  \dots
  @>\cartan>>
  H_T\Lambda^{n}(\calT)
  @>>>
  0
  \\
  @.
  @V{\pi^{0}}VV
  @.
  @V{\pi^{n}}VV
  @.
  \\
  0 
  @>>>
  \calP\Lambda^{0}(\calT,\calU)
  @>\cartan>>
  \dots
  @>\cartan>>
  \calP\Lambda^{n}(\calT,\calU)
  @>>>
  0
  .
 \end{CD}
\end{gather*}
A central concept on the analytical level which we want to mimic on the discrete level
are the harmonic forms.
We define the \emph{$k$-th discrete harmonic space} as
\begin{align*}
 \label{math:discreteharmonicspace}
 \frakH^{k}(\calT,\calU) 
 :=
 \left( \ker \cartan: \calP\Lambda^{k}(\calT,\calU) \rightarrow \calP\Lambda^{k+1}(\calT,\calU) \right)
 \cap 
 \left( \cartan \calP\Lambda^{k-1}(\calT,\calU) \right)^{\perp}
 .
\end{align*}
We note that this definition of discrete harmonic $k$-forms $\frakH^{k}(\calT,\calU)$ 
is entirely analogous to the identity satisfied 
by the harmonic $k$-forms $\frakH^{k}(\overline\Omega,\Gamma_T,\Gamma_N)$. 
The dimension of $\frakH^{k}(\calT,\calU)$
is the Betti number $b_k(\overline\Omega,\Gamma_T)$ of $\overline\Omega$ relative to $\Gamma_T$,
as follows, e.g., by Corollary~2 in \cite{licht2016discrete},
but can also be shown with an adaption of methods in \cite{AFW2}.
In particular, the dimension of $\frakH^{k}(\calT,\calU)$
depends only on $\Omega$ and $\Gamma_T$. 

We outline a mixed finite element method for the saddle point system \eqref{math:mixedproblem}.
We search for $\sigma_h \in \calP\Lambda^{k}(\calT,\calU)$, 
$u_h \in \calP\Lambda^{k}(\calT,\calU)$, 
and $p_h \in \frakH^{k}(\calT,\calU)$
such that 
\begin{subequations}
\label{math:discretemixedproblem}
 \begin{align}
  \langle \sigma_h, \tau_h \rangle_{L^{2}} - \langle u_h, \cartan \tau_h \rangle_{L^{2}}
  &= 0,
  &{}&
  \tau_h \in \calP\Lambda^{k-1}(\calT,\calU),
  \\
  \langle \cartan \sigma_h, v_h \rangle_{L^{2}} + \langle \cartan u_h, \cartan v_h \rangle_{L^{2}} + \langle p_h, v_h \rangle_{L^{2}}
  &= \langle f, v_h \rangle_{L^{2}},
  &{}&
  v_h \in \calP\Lambda^{k}(\calT,\calU),
  \\
  \langle u_h, q_h \rangle_{L^{2}}  
  &= 0,
  &{}&
  q_h \in \frakH^{k}(\calT,\calU).  
\end{align}
\end{subequations}
The existence of a uniformly bounded smoothed projection enables the Galerkin theory of Hilbert complexes.
We find that discrete problem is well-posed: 
there exists uniformly bounded $C > 0$ such that \eqref{math:discretemixedproblem}
has a unique solution $(\sigma_{h},u_{h},p_{h})$ that satisfies 
\begin{gather*}
 \| \sigma_{h} \|_{H\Lambda^{k-1}(\Omega)}
 +
 \| u_{h} \|_{H\Lambda^{k}(\Omega)}
 +
 \| p_{h} \|_{L^{2}\Lambda^{k}(\Omega)}
 \leq 
 C
 \| f \|_{L^{2}\Lambda^{k}(\Omega)}
 .
\end{gather*}
Furthermore, a priori error estimates can be shown as in Theorem~3.9 of \cite{AFW2}. 
We define an auxiliary quantity $\mu$ that measures the approximation of harmonic forms,
\begin{gather*}
 \mu 
 :=
 \sup_{ \substack{ p \in \frakH^{k}(\Omega,\Gamma_T,\Gamma_N) \setminus \{0\} \\ p \neq 0 } } 
 \dfrac{ 
  \| p - \pi^{k}_{h} p \|_{L^{2}\Lambda^{k}(\Omega)}
 }{
  \| p \|_{L^{2}\Lambda^{k}(\Omega)}
 }
 .
\end{gather*}
Letting $(\sigma,u,p)$ denote the solution of the original system \eqref{math:mixedproblem},
there exists $C > 0$ uniformly bounded such that 
\begin{align*}
 &
 \| \sigma - \sigma_{h} \|_{H\Lambda^{k-1}(\Omega)}
 +
 \| u - u_{h} \|_{H\Lambda^{k}(\Omega)}
 +
 \| p - p_{h} \|_{L^{2}\Lambda^{k}(\Omega)}
 \\
 &
 \leq 
 C
 \bigg(
 \inf_{ \tau_{h} \in \calP\Lambda^{k-1}(\calT,\calU) }
 \| \sigma - \tau_{h} \|_{H\Lambda^{k-1}(\Omega)}
 +
 \inf_{ v_{h} \in \calP\Lambda^{k}(\calT,\calU) }
 \| u - v_{h} \|_{H\Lambda^{k}(\Omega)}
 \\&\qquad\qquad
 +
 \inf_{ q_{h} \in \calP\Lambda^{k}(\calT,\calU) }
 \| p - q_{h} \|_{L^{2}\Lambda^{k}(\Omega)}
 +
 \mu
 \inf_{ v_{h} \in \calP\Lambda^{k}(\calT,\calU) }
 \| u_{\cartan} - v_{h} \|_{H\Lambda^{k}(\Omega)}
 \bigg)
 ,
\end{align*}
where $u_{\cartan}$ denotes the $L^{2}$ orthogonal projection 
of $u$ onto $\cartan H_T\Lambda^{k-1}(\Omega)$.
More specific error estimates for the $L^{2}$ norm 
of $u$ and its derivatives, generalizing Aubin-Nitsche-type techniques,
are described in Theorem~3.11 of \cite{AFW2}.
These rely on the compactness of the solution operator $G$,  
which is a direct consequence of compactness of the Rellich embedding \eqref{math:compactembedding}. 
Gaffney inequalities such as \eqref{math:gaffneyinequality} are important 
for deriving convergence rates in terms of mesh size parameters. 
The reader is referred to Section~3 of \cite{AFW2} for all details.

\subsection{Examples in Vector Analysis}
\label{subsec:examplesinvectoranalysis}

For the purpose of demonstration, 
we consider special cases of the Hodge Laplace equation
in three dimensions in the notation of classical vector calculus here.
Here we assume that $\Omega$ is a strongly Lipschitz domain
and let $\vec{n} : \partial\Omega \rightarrow \bbR^{3}$
be the outward unit normal field along the boundary.

\subsubsection{The case $k=0$}
This is the Poisson problem with mixed boundary conditions.
The boundary part $\Gamma_T$ corresponds to the \emph{Dirichlet boundary part},
and $\Gamma_N$ corresponds to the \emph{Neumann boundary part}.
We let $\calH^{0}$ be the span of the indicator functions 
of those connected components of $\Omega$ that do not touch $\Gamma_T$. 
The Hodge Laplace problem is to find $u \in H^{1}(\Omega)$
and $p \in \calH^{0}$ such that 
\begin{gather*}
 -\divergence\grad u + p = f, \quad u \perp \calH^{0},
 \quad 
 u_{|\Gamma_T} = 0, \quad (\grad u)_{|\Gamma_N} \cdot\vecn = 0
\end{gather*}
for given $f \in L^{2}(\Omega)$.
The condition $u \perp \calH^{0}$ enforces that $u$
has vanishing mean on those components of $\Omega$ that touch $\Gamma_T$. 
The corresponding discretization is the primal method for the Poisson problem.

\subsubsection{The case $k=1$}
In this case, the Hodge Laplace operator translates to the vector Laplace operator.
The harmonic $1$-forms correspond to the vector fields 
\begin{gather*}
 \vec \calH_1 =
 \left\{\;
  \vec p \in L^2(\Omega,\bbR^{3})
  \suchthat
  \divergence \vec p = 0, \;
  \curl \vec p = 0, \;
  \vec p_{|\Gamma_N} \cdot \vecn = 0, \;
  \vec p_{|\Gamma_T} \times \vecn = 0
 \;\right\}
 .
\end{gather*}
The Hodge Laplace problem translates as follows.
Given $\vec f \in L^2(\Omega,\bbR^{3})$
we seek $\vec u \in H(\Omega,\curl)$, $\sigma \in H^{1}(\Omega)$,
and $\vec p \in \vec \calH^{1}$
such that 
\begin{gather*}
 \sigma = - \divergence \vec u,
 \quad
 \grad\sigma + \curl\curl \vec u = \vec f - \vec p,
 \quad
 \vec u \perp \vec \calH_1
 ,\\
 \sigma_{|\Gamma_T}   = 0,
 \quad
 \vec u_{|\Gamma_T} \times \vecn = 0,
 \quad
 \vec u_{|\Gamma_N} \cdot \vecn = 0, 
 \quad 
 (\curl \vec u)_{|\Gamma_N} \times \vecn = 0.
\end{gather*}
The divergence of $\vec u$ is treated as an auxiliary variable.
In FEEC, 
the $\curl$-$\curl$-subsystem of the system is discretized with a primal method,
and the $\grad$-$\divergence$-subsystem is treated with a mixed discretization.

\subsubsection{The case $k=2$}
This is another formulation of the vector Laplacian. 
In comparison to the case $k=1$,
the roles of $\Gamma_T$ and $\Gamma_N$ are reversed.
The harmonic $2$-forms correspond to the space  
\begin{gather*}
 \vec \calH_2 
 =
 \left\{\;
  \vec p \in L^2(\Omega,\bbR^{3})
  \suchthat
  \divergence \vec p = 0, \; \curl \vec p = 0, \;
  \vec p_{|\Gamma_N} \times \vecn = 0, \; \vec p_{|\Gamma_T} \cdot \vecn = 0
 \;\right\}
 .
\end{gather*}
The Hodge Laplace equation translates as follows.
Given $\vec f \in L^{2}(\Omega,\bbR^{3})$,
we seek $\vec u \in H(\Omega,\divergence)$, $\vec\sigma \in H(\Omega,\curl)$,
and $p \in \vec\calH^{2}$ 
which satisfy
\begin{gather*}
 \vec \sigma = \curl \vec u, \quad
 \curl \vec \sigma - \grad\divergence \vec u = \vec f - \vec p, \quad
 \vec u \perp \vec \calH_2 
 ,\\
 \vec u_{|\Gamma_T} \cdot \vecn = 0,
 \quad
 \sigma_{|\Gamma_T} \times \vecn = 0,
 \quad
 \vec u_{|\Gamma_N} \times \vecn = 0, 
 \quad
 ( \divergence \vec u )_{|\Gamma_N} = 0.
\end{gather*}
%
In comparison to the case $k=1$, the role of essential and natural boundary conditions is reversed. 
In FEEC, the $\grad$-$\divergence$-subsystem is discretized in primal formulation,
while the $\curl$-$\curl$-subsystem is discretized in mixed formulation.

\subsubsection{The case $k=3$}
This is again the Poisson problem with mixed boundary conditions,
but with the roles of $\Gamma_T$ and $\Gamma_N$ reversed in comparison to the case $k=0$.
We let $\calH_3$ be spanned by the indicator functions 
of those connected components of $\Omega$ that do not touch $\Gamma_{N}$. 
For some given $f \in L^{2}(\Omega)$, 
the Hodge Laplace problem reduces to finding  
$u \in L^{2}(\Omega)$, $\sigma \in H(\Omega,\divergence)$, and $p \in \calH^{3}$
such that  
\begin{gather*}
 \vec \sigma = -\grad u,
 \quad
 \divergence \vec \sigma = f - p, 
 \quad
 u \perp \mathfrak \calH_3, 
 \quad 
 \sigma_{|\Gamma_T} \cdot \vecn = 0, \quad u_{|\Gamma_N} = 0.
\end{gather*}
The orthogonality condition forces
$u$ to have vanishing mean on the connected components of $\Omega$ that touch $\Gamma_N$.
The role of essential and natural boundary conditions is reversed
in comparison to the case $k=0$. 
The corresponding discretization is a mixed finite element method for the Poisson problem.


\bibliographystyle{amsplain}
\bibliography{references.mixedbc}

\end{document}